\renewcommand{\subsectionmark}[1]{}
\newenvironment{enumeratearabic}{
\begin{enumerate}[label=(\arabic*), leftmargin=0pt,labelindent=2em,itemindent=!]
}{
\end{enumerate}
}
\newenvironment{enumeratearabic*}{
\begin{enumerate*}[label=(\arabic*)] 
}{
\end{enumerate*}
}
\newenvironment{enumerateroman*}{
\begin{enumerate*}[label=(\roman*)] 
}{
\end{enumerate*}
}
\numberwithin{equation}{section}
\newtheorem{theoremcounter}{theoremcounter}[section]
\theoremstyle{plain}
\newtheorem{corollary}[theoremcounter]{Corollary}
\newtheorem{lemma}[theoremcounter]{Lemma}
\newtheorem{proposition}[theoremcounter]{Proposition}
\newtheorem{theorem}[theoremcounter]{Theorem}
\theoremstyle{definition}
\theoremstyle{remark}
\newtheorem{remark}[theoremcounter]{Remark}
\newtheorem*{mainremark}{Remark}
\newtheorem*{remarkcomputation}{Computation}
\let\cal\undefined
\newcommand{\tx}{\ensuremath{\text}}
\newcommand{\bboard}{\ensuremath{\mathbb}}
\newcommand{\cal}{\ensuremath{\mathcal}}
\renewcommand{\frak}{\ensuremath{\mathfrak}}
\newcommand{\ga}{\ensuremath{\gamma}}
\newcommand{\bA}{\ensuremath{\bboard A}}
\newcommand{\bD}{\ensuremath{\bboard D}}
\newcommand{\bH}{\ensuremath{\bboard H}}
\newcommand{\bP}{\ensuremath{\bboard P}}
\newcommand{\bR}{\ensuremath{\bboard R}}
\newcommand{\bZ}{\ensuremath{\bboard Z}}
\newcommand{\cF}{\ensuremath{\cal{F}}}
\newcommand{\cH}{\ensuremath{\cal{H}}}
\newcommand{\cL}{\ensuremath{\cal{L}}}
\newcommand{\cO}{\ensuremath{\cal{O}}}
\newcommand{\cY}{\ensuremath{\cal{Y}}}
\newcommand{\frakm}{\ensuremath{\frak{m}}}
\newcommand{\rmd}{\ensuremath{\mathrm{d}}}
\newcommand{\rmJ}{\ensuremath{\mathrm{J}}}
\newcommand{\rmM}{\ensuremath{\mathrm{M}}}
\newcommand{\rmU}{\ensuremath{\mathrm{U}}}
\newcommand*{\longhookrightarrow}{\ensuremath{\lhook\joinrel\relbar\joinrel\rightarrow}}
\newcommand{\lra}{\ensuremath{\longrightarrow}}
\newcommand{\lhra}{\ensuremath{\longhookrightarrow}}
\newcommand{\lmto}{\ensuremath{\longmapsto}}
\newcommand{\N}{\ensuremath{\mathbb{N}}}
\newcommand{\Z}{\ensuremath{\mathbb{Z}}}
\newcommand{\Q}{\ensuremath{\mathbb{Q}}}
\newcommand{\R}{\ensuremath{\mathbb{R}}}
\newcommand{\C}{\ensuremath{\mathbb{C}}}
\newcommand{\OK}{\ensuremath{\mathcal{O}_E}}
\renewcommand{\pmod}[1]{\ensuremath{\;(\mathrm{mod}\, #1)}}
\newcommand{\Hom}{\ensuremath{\mathop{\mathrm{Hom}}}}
\newenvironment{psmatrix}{\left(\begin{smallmatrix}}{\end{smallmatrix}\right)}
\newcommand{\Mat}[1]{\ensuremath{\mathrm{Mat}_{#1}}}
\newcommand{\GL}[1]{\ensuremath{\mathrm{GL}_{#1}}}
\newcommand{\SL}[1]{\ensuremath{\mathrm{SL}_{#1}}}
\newcommand{\Sp}[1]{\ensuremath{\mathrm{Sp}_{#1}}}
\newcommand{\ord}{\ensuremath{\mathrm{ord}}}
\newcommand{\Tr}{\ensuremath{\mathop{\mathrm{Tr}}}}
\renewcommand{\det}{\ensuremath{\mathrm{det}}}
\renewcommand{\ker}{\ensuremath{\mathop{\mathrm{ker}}}}
\newcommand{\UGZ}{\ensuremath{\mathrm{U} (g, g) (\mathbb{Z})}}
\newcommand{\UGQ}{\ensuremath{\mathrm{U} (g, g) (\mathbb{Q})}}
\newcommand{\UGR}{\ensuremath{\mathrm{U} (g, g) (\mathbb{R})}}
\newcommand{\MG}{\ensuremath{\mathrm{M}_\bullet^{(g)}}}
\newcommand{\MGA}{\ensuremath{\mathrm{M}_k(\Gamma)}}
\newcommand{\MGR}{\ensuremath{\mathrm{M}_\bullet^{(g)}(\rho)}}
\newcommand{\MGP}{\ensuremath{\mathrm{M}_\bullet^{(g^\prime)}(\rho)}}
\newcommand{\MA}{\ensuremath{\mathrm{M}_k^{(g)}}}
\newcommand{\MK}{\ensuremath{\mathrm{M}_{\leq k}^{(g)}}}
\newcommand{\MR}{\ensuremath{\mathrm{M}_k^{(g)}(\rho)}}
\newcommand{\FMA}{\ensuremath{\mathrm{FM}_k^{(g)}}}
\newcommand{\FMAA}{\ensuremath{\mathrm{FM}_k^{(g)}(\rho)}}
\newcommand{\FMN}{\ensuremath{\mathrm{FM}_k^{(n)}(\rho)}}
\newcommand{\FMK}{\ensuremath{\mathrm{FM}_{\leq k}^{(g)}}}
\newcommand{\FMO}{\ensuremath{\mathrm{FM}_{k_0}^{(g)}}}
\newcommand{\FMC}{\ensuremath{\mathrm{FM}_k^{(g, l)}}}
\newcommand{\FMR}{\ensuremath{\mathrm{FM}_k^{(g, l)} (\rho)}}
\newcommand{\FMG}{\ensuremath{\mathrm{FM}_\bullet^{(g)}}}
\newcommand{\FMGL}{\ensuremath{\mathrm{FM}_\bullet^{(g, l)}}}
\newcommand{\FMGR}{\ensuremath{\mathrm{FM}_\bullet^{(g, l)} (\rho)}}
\newcommand{\FMGP}{\ensuremath{\mathrm{FM}_\bullet^{(g^\prime)} (\rho)}}
\newcommand{\JA}{\ensuremath{\mathrm{J}_{k, m}^{(g)}}}
\newcommand{\JB}{\ensuremath{\mathrm{J}_{k, m}^{(g - 1)}}}
\newcommand{\JR}{\ensuremath{\mathrm{J}_{k, m}^{(g)} (\rho)}}
\newcommand{\JAA}{\ensuremath{\mathrm{J}_{k, m}^{(g - 1)} (\rho)}}
\newcommand{\Mgl}{\ensuremath{\mathrm{Mat}_{g,l} (E)}}
\newcommand{\MglA}{\ensuremath{\mathrm{Mat}_{g,l}}}
\newcommand{\MglZ}{\ensuremath{\mathrm{Mat}_{g,l} (\cO_E)}}
\newcommand{\MglC}{\ensuremath{\mathrm{Mat}_{g,l} (\mathbb{C})}}
\newcommand{\MlgZ}{\ensuremath{\mathrm{Mat}_{l, g} (\cO_E)}}
\newcommand{\MlgC}{\ensuremath{\mathrm{Mat}_{l, g} (\mathbb{C})}}
\newcommand{\Hg}{\ensuremath{\mathbb{H}_g}}
\newcommand{\Hgl}{\ensuremath{\mathbb{H}_{g, l}}}
\newcommand{\HGK}{\ensuremath{\mathrm{Herm}_{g} (E)}}
\newcommand{\HLK}{\ensuremath{\mathrm{Herm}_{l} (E)}}
\newcommand{\Ggl}{\ensuremath{\Gamma^{(g, l)}}}
\newcommand{\rot}{\ensuremath{\mathrm{rot}}}
\title{Some cases of Kudla's modularity conjecture for unitary Shimura varieties}
\author{Jiacheng Xia}
\date{}
\newcommand{\headertitle}{{\normalfont Some cases of Kudla's modularity conjecture for unitary Shimura varieties
}}
\newcommand{\headerauthors}{
  Jiacheng~Xia
}
\begin{document}
\maketitle
\begin{abstract}
\textbf{Abstract:}   We use the method of Bruinier--Raum to show that symmetric formal Fourier--Jacobi series, in the cases of norm-Euclidean imaginary quadratic fields, are Hermitian modular forms. Consequently, combining a theorem of Yifeng Liu, we deduce Kudla's conjecture on the modularity of generating series of special cycles of arbitrary codimension for unitary Shimura varieties defined in these cases. \end{abstract}



\thispagestyle{scrplain}



\section{Introduction}\label{sec:intro}
Fourier--Jacobi expansions of automorphic forms, first defined in \cite{piatetsky-shapiro-1966}, are among the major tools to study the subject. For Siegel modular forms, they played prominent roles in the proof of Saito--Kurokawa conjecture \cite{andrianov-1979, maass-1979a, maass-1979b, maass-1979c, zagier-1981}, and recently in the work of Bruinier--Raum \cite{bruinier-raum-2015} on Kudla's modularity conjecture for orthogonal Shimura varieties. Fourier--Jacobi expansions are also available for unitary groups. For example, for holomorphic automorphic forms on unitary groups $\mathrm{U}(2, 1)$ over totally real number fields, Takuro Shintani developed a theory (1979) on Fourier--Jacobi expansions which was reformulated in \cite{murase-sugano-2002}; as an application of Shintani's theory, a nonvanishing criterion for the unitary Kudla lift (\cite{kudla-1981}) of a holomorphic cusp form on $\mathrm{U} (1, 1)$ was found by Murase--Sugano \cite{murase-sugano-2007}. In this manuscript, we define a formal analogue of Fourier--Jacobi expansions for the unitary group $\mathrm{U}(g, g)$, which is called symmetric formal Fourier--Jacobi series after Bruinier--Raum, and show that they define Hermitian modular forms \cite{braun-1949, braun-1950, braun-1951} in the cases of norm-Euclidean imaginary quadratic fields.

Our major motivation to study such symmetric formal Fourier--Jacobi series arises from the role they play for the Kudla conjecture \cite{kudla-1997} on the modularity of generating series of special cycles, for both orthogonal and unitary Shimura varieties. In the unitary case, Kudla's conjecture predicts that the special cycles on a unitary Shimura variety should be the Fourier coefficients of some Hermitian modular form. Beyond the case of codimension $1$, assuming absolute convergence of the generating series, Yifeng Liu \cite{MR2928563} proved the unitary Kudla conjecture. Recently, Yota Maeda \cite{Maeda-2021} gave another proof of Liu's result and generalized it assuming the Bloch--Beilinson conjecture. In this manuscript, we show that the  unitary Kudla conjecture is true unconditionally in the cases of norm-Euclidean imaginary quadratic fields. And the way we do that is combining a fact from Liu's theorem that the generating series is a symmetric formal Fourier--Jacobi series and our modularity result that such a series defines a Hermitian modular form. This is a method that has already been successfully applied to prove the orthogonal Kudla conjecture over $\Q$ in the work of Bruinier--Raum \cite{bruinier-raum-2015}. In the unitary case of imaginary quadratic fields, the scope of this method is limited to the cases of norm-Euclidean imaginary quadratic fields that we treat in this manuscript.

\subsection{The modularity result}
Let $E/\Q$ be an imaginary quadratic field. For integers $g, k, l$ such that $1 \leq l \leq g - 1$, every Hermitian modular form $f$ of degree $g$, weight $k$ has a Fourier--Jacobi expansion of cogenus $l$.  More precisely, if we write the variable $\tau \in \bH_g$ in the Hermitian upper half space $\Hg$ as
\[\tau = \begin{pmatrix}
\tau_1 & w\\
z & \tau_2
\end{pmatrix} \]
for $\tau_1 \in \bH_{g - l}$, $\tau_2 \in \bH_l$, $w \in  \Mat{g - l, l} (\C)$, and $z \in \Mat{l, g - l} (\C)$, then  $f$ has a Fourier--Jacobi expansion of the form
\begin{gather*}\label{apricot}
   f (\tau) = 
\sum_{m \in \mathrm{Herm}_l (E)_{\geq 0}} \phi_m (\tau_1, w, z) e (m \tau_2)
\tx{,} 
\end{gather*}
where the sum runs over all the $l \times l$ positive semidefinite Hermitian matrices $m$ with entries in $E$, and $e (x) = \exp \big(2 \pi i \cdot \Tr(x) \big)$ for a square matrix $x$. Moreover, the coefficients $\phi_m$ are Hermitian Jacobi forms of degree $g - l$, weight $k$, and index $m$, which satisfy certain symmetry condition for their ordinary Fourier coefficients from the modularity of $f$.  

The fact that any Hermitian modular form is a formal Fourier--Jacobi series satisfying this symmetry condition and absolute convergence, motivates  the notion of symmetric formal Fourier--Jacobi series, namely such series that satisfy the symmetry condition without assuming absolute convergence. Conversely, if a symmetric formal Fourier--Jacobi series converges absolutely, then it defines a Hermitian modular form. One natural question is then: does every symmetric formal Fourier--Jacobi series automatically converge absolutely?

Such a rigidity result was already considered by Ibukiyama--Poor--Yuen \cite{ibukiyama-poor-yuen-2012} for Siegel paramodular forms. For Siegel modular forms, J. Bruinier \cite{bruinier-2015} and M. Raum \cite{raum-2015c}  resolved independently the case of degree $2$ and arbitrary type over $\Q$. In their joint work \cite{bruinier-raum-2015}, Bruinier--Raum proved the general case of higher degree and arbitrary type over $\Q$. In this manuscript, we show the Hermitian counterpart in the cases of norm-Euclidean imaginary quadratic fields $E = \Q(\sqrt {d})$, namely for $d \in \big\{-1, -2, -3, -7, -11\big\}$. 

\begin{theorem}\label{castle}
Every symmetric formal Fourier--Jacobi series of arbitrary arithmetic type for the unitary group $\UGZ$, defined in the cases of norm-Euclidean imaginary quadratic fields $E$, converges absolutely to  a Hermitian modular form. 
\end{theorem}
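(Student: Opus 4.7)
The plan is to adapt the Bruinier--Raum strategy \cite{bruinier-raum-2015} to the unitary setting, by induction on the degree $g$, combining a Hermitian theta decomposition of Jacobi forms, vector-valued Hermitian modular forms for the unitary Weil representation, and a generator argument for $\UGZ$. The norm-Euclidean hypothesis on $E$ enters precisely in the lattice-theoretic and matrix-reduction steps over $\cO_E$. The base case $g = 1$ reduces to a classical elliptic $q$-expansion argument: an SFFJ at degree $1$ is just an ordinary Fourier series for a congruence subgroup of $\SL_2(\cO_E)$, and absolute convergence is standard.

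For the inductive step at degree $g \geq 2$, assume the statement for all $g' < g$ and arbitrary arithmetic type, and let $f = \sum_m \phi_m(\tau_1, w, z)\, e(m\tau_2)$ be a symmetric formal Fourier--Jacobi series of cogenus $l$. The first task is to show each formal Jacobi coefficient $\phi_m$ is a genuine Hermitian Jacobi form of degree $g - l$. To this end, one writes $\phi_m$ formally as $\sum_\mu h_\mu \Tg$ through a Hermitian theta decomposition indexed by $\cO_E^l$-cosets modulo $m$, with components $h_\mu$ forming a formal vector-valued series of degree $g - l$ taking values in a finite-dimensional unitary Weil representation $\rho_m$. The SFFJ symmetry on $f$ translates into the corresponding symmetry for these components, viewed as a lower-degree series of arithmetic type $\rho \otimes \rho_m$; the induction hypothesis then forces absolute convergence and modularity of each $h_\mu$, hence of every $\phi_m$.

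The second task is to promote $f$ itself to a Hermitian modular form on $\Hg$. Once each $\phi_m$ is known to be a genuine Hermitian Jacobi form, polynomial growth bounds on its Fourier coefficients in terms of $m$ imply absolute convergence of $f$ on compact subsets of $\Hg$. Modular invariance under $\UGZ$ reduces to invariance under a generating set: the Siegel parabolic acts by translations dictated by the FJ shape itself, while the remaining generators come from a swap element exchanging the $\tau_1$ and $\tau_2$ blocks, whose compatibility with $f$ is precisely the SFFJ symmetry condition. Finally, one verifies that the Siegel parabolic together with this swap generate all of $\UGZ$ in the norm-Euclidean regime, using an analogue of the Euclidean algorithm over $\cO_E$ to perform the required matrix factorizations.

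The main obstacle is twofold. First, one must set up the unitary Weil representation $\rho_m$ and the Hermitian theta decomposition in a form robust enough to carry the induction, which requires elementary-divisor-type decompositions of Hermitian matrices together with a clean theory of dual $\cO_E$-lattices --- both of which degenerate outside the norm-Euclidean regime. Second, one must deduce \emph{termwise} absolute convergence of $f$ from absolute convergence of each $\phi_m$; this is handled by Hecke-type coefficient estimates in \cite{bruinier-raum-2015}, and its unitary analogue, together with the vanishing of certain low-weight vector-valued Hermitian modular forms that must be verified to close the induction, is the deepest technical ingredient. Once these two components are in place, the inductive pattern assembles the proof.
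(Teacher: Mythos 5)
There is a genuine gap at the heart of your proposal: the step where you pass from ``each $\phi_m$ is a genuine Hermitian Jacobi form'' to absolute convergence of $f=\sum_m \phi_m e(m\tau_2)$. Note first that in the definition of a symmetric formal Fourier--Jacobi series each coefficient $\phi_m$ is \emph{already} a genuine Jacobi form, so your first task (theta decomposition plus induction on the degree) only re-establishes what is built into the hypothesis. The actual difficulty is uniformity in $m$: polynomial bounds on the Fourier coefficients of an individual $\phi_m$ of fixed index say nothing about how the family $(\phi_m)_m$ grows as $m\to\infty$, and if such termwise estimates sufficed then \emph{every} formal series of Jacobi forms would converge and the symmetry condition \eqref{zhou} would be irrelevant --- which is exactly what the theorem denies being trivial. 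The ``Hecke-type coefficient estimates'' you invoke from Bruinier--Raum are not how they (or this paper) obtain convergence. The route actually used is: (i) prove that $\FMG$ is \emph{algebraic} over $\MG$ by playing an asymptotic upper bound $\dim \FMK \ll_g k^{g^2+1}$ (via vanishing orders, theta decomposition and the slope bound $\omega_g \geq 12\,c_E^{\,g-1}>0$) against the lower bound $\dim \MK \gg_g k^{g^2+1}$ from the Selberg trace formula (Theorem~\ref{lolita}); (ii) deduce from algebraicity the \emph{local} convergence of $f$ near the toroidal boundary, using that the local ring at a boundary point of the toroidal compactification is henselian and excellent, hence algebraically closed in its completion (Proposition~\ref{wind}); (iii) analytically continue to all of $\Hg$ using that every prime divisor on $X_g$ meets any neighborhood of the toroidal boundary, a statement reduced to the Siegel case by a generic \emph{rational} embedding $\ga\cH_g\hra\bH_g$ (Theorem~\ref{crossing}, Proposition~\ref{institute}). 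None of this machinery, nor any substitute for it, appears in your outline, and the convergence step is precisely where it is needed.

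A secondary but important misplacement: the norm-Euclidean hypothesis does not enter through the generation of $\UGZ$ or through matrix reduction over $\cO_E$ --- class number one suffices for Lemma~\ref{toy} and for the definition of the series. It is needed exactly in Lemma~\ref{cell}: for every coset $s\in\Delta_g(m)$ one must find $r\in s+m\cO_E^{\,g}$ with $|r_i|^2\leq(1-c_E)m^2$ for a uniform $c_E>0$, which bounds $\ord_r\theta^{(g)}_{m,s}$ and forces the theta components $h_s$ to vanish to high order; this is what makes $\omega_g>0$ and hence the dimension count in (i) close. Your induction on the degree $g$ and the final ``parabolic plus swap generate $\UGZ$'' step are harmless, but they do not substitute for the algebraicity--local convergence--continuation chain, so the proposal as it stands does not prove the theorem. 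Finally, the reduction to arbitrary type and cogenus in the paper is done afterwards (Propositions~\ref{home} and~\ref{holmes}), via a pairing with vector-valued forms and the \emph{formal} theta decomposition of formal Fourier--Jacobi coefficients, not inside the main induction as you propose.
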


\begin{remark}
A more detailed version is presented in Theorem~\ref{heaven}.
\end{remark}

\begin{remark}
After uploading a first version of this manuscript on arXiv, the author learned that Yuxiang Wang partially proved similar results in his thesis work \cite{Wang-2020}. Note that Wang claims to prove for all imaginary quadratic fields. However, in the proof of Lemma~4.6 on page 39 of his thesis, the choice of $r$ is not justified, and in fact cannot be made in the general case. One of the first counterexamples arises from the case of $E = \Q (\sqrt{-5})$, where in Wang's notations for an arbitrarily fixed $a \in \frac{1}{m} (\cO^\#)^g/\cO^g$, one cannot even choose $r = (r_1, \ldots, r_g)$ such that  $\frac{1}{m} r \equiv a \pmod{\cO^g}$ and that $|r_g|^2 < m^2$ (but Wang claims $|r_g|^2 \leq \frac{m^2}{D}$ for $D = 20$).
\end{remark}

\subsection{Application: The unitary Kudla conjecture}
The modularity of generating series of geometric cycles has been studied since the first construction of such modular generalting series in the work of Hirzebruch--Zagier \cite{hirzebruch-zagier-1976}. In a long collaboration, Kudla--Millson \cite{kudla-millson-1986, kudla-millson-1987, kudla-millson-1990} defined a special family of locally symmetric cycles of Riemmanian locally symmetric spaces $X$, which are called special cycles, and proved the modularity of their generating series valued in cohomology classes. It turns out to be particularly interesting when $X$ is a Shimura variety, as the analogous generating series valued in Chow groups can be defined, and it is natural to ask if they are already modular at this level. 

In the case of Shimura varieties of orthogonal type over a totally real number field, Stephen S. Kudla raised this question in his seminal work \cite{kudla-1997}. Inspired by the work of Gross--Kohnen--Zagier \cite{gross-kohnen-zagier-1987} on the images of Heegner points in the Jacobian of a modular curve, Richard Borcherds \cite{borcherds-1999} proved the modularity of generating series of Heegner (special) divisors valued in the first Chow group, by employing his work \cite{borcherds-1998} on the construction of a family of meromorphic modular functions via regularized theta lift. Building upon Borcherds' work, W. Zhang \cite{zhang-2009} proved that the generating series of special cycles valued in Chow groups are modular assuming absolute convergence. Subsequently, Bruinier--Raum \cite{bruinier-raum-2015} completed the proof of Kudla's modularity conjecture over $\Q$. Most recently, over an arbitrary totally real field of degree $d$ assuming the Bloch--Beilinson conjecture on the injectivity of the Abel--Jacobi maps, Kudla \cite{Kudla-2019} proved the modularity conjecture for orthogonal Shimura varieties of signature $((m, 2)^{d_+}, (m + 2, 0)^{d - d_+})$, and Yota Maeda \cite{maeda-2020a} independently showed the modularity conjecture in more general cases assuming absolutely convergence of the generating functions and the Bloch--Beilinson conjecture.

In the case of Shimura varieties of unitary type and codimension $1$ (special divisors), the conjecture was verified in \cite{MR2928563}. In recent preprints of Bruinier--Howard--Kudla--Rapoport--Yang (to appear in Ast\'erisque) \cite{Bruinier--Howard--Kudla--Rapoport--Yang-1, Bruinier--Howard--Kudla--Rapoport--Yang-2}, generating series of special divisors valued in the Chow group and the arithmetic Chow group are defined on the compactified integral model; consequently, their modularity is proven and more arithmetic applications are found, including relations between derivatives of $L$-functions and arithmetic intersection pairings \`a  la Gross--Zagier, and a special case of Colmez's conjecture on the Faltings heights of abelian varieties with complex multiplication. However, the cases of higher codimension remain open problems. In the cases of norm-Euclidean imaginary quadratic fields, we show in this manuscript that the convergence assumption in Theorem 3.5 of Y. Liu's work \cite{MR2928563} can be dropped for arbitrary codimension. The following theorem is shown in Section~\ref{sit}.
\begin{theorem}
In the cases of norm-Euclidean imaginary quadratic fields, the unitary Kudla conjecture is true for open Shimura varieties.
\end{theorem}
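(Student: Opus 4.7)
The plan is to combine Theorem~\ref{castle} of this manuscript with the structural results of Yifeng Liu \cite{MR2928563}, removing the absolute convergence hypothesis that appears in his Theorem~3.5. Concretely, for a unitary Shimura variety attached to $\mathrm{U}(g-l,l)$ over one of the norm-Euclidean imaginary quadratic fields $E$, one associates to each positive semidefinite Hermitian matrix $T$ of size $g$ a special cycle $Z(T)$ in the appropriate Chow group (with coefficients), and forms the generating series $\sum_T Z(T)\, q^T$. I would proceed in three steps.

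First, I would recall from Liu's construction that, after grouping Fourier coefficients by the lower-right $l\times l$ block of $T$, the generating series is naturally organized as a formal Fourier--Jacobi series of cogenus $l$; the Jacobi coefficients $\phi_m$ are valued in the Chow group and arise from subvarieties on which Kudla--Millson-type lifts are already known to produce Hermitian Jacobi forms of the correct weight and index. This identifies the formal object lying over the unitary Shimura variety as an element of $\FMGR$ (in the notation set up for Theorem~\ref{heaven}).

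Second, I would verify the symmetry condition. The key input here is Liu's partial modularity result: for each single variable $\tau_i$ (equivalently, after pairing against a suitably positioned embedded Shimura subvariety of lower dimension), the restriction of the generating series is already modular, so its Fourier coefficients transform correctly under the embedded Hermitian modular group. This single-variable modularity, applied at every index position and at every cogenus $l$, is exactly the symmetry hypothesis defining a \emph{symmetric} formal Fourier--Jacobi series. I would simply quote or transcribe Liu's argument (specifically the part of his Theorem~3.5 that does not require absolute convergence) to extract this.

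Third, with the generating series now identified as an element of the space of symmetric formal Fourier--Jacobi series over $\UGZ$, I invoke Theorem~\ref{castle}: the series converges absolutely on $\Hg$ and defines a genuine Hermitian modular form of the expected weight and arithmetic type. This is the conclusion of the unitary Kudla conjecture for open Shimura varieties in the cases of $E = \Q(\sqrt{d})$ with $d\in\{-1,-2,-3,-7,-11\}$.

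The only nontrivial step is the second one: one must be careful that Liu's lower-dimensional modularity output is formulated in exactly the symmetry framework required by the definition underlying Theorem~\ref{castle}, across all cogenera $l$ and for the relevant arithmetic type (e.g.\ arising from the level structure and from the character on $\UGZ$ induced by the $\Ltwo$-metric on the coefficient sheaf). I expect this to be bookkeeping rather than a genuine obstacle, since Liu already proves full modularity under convergence and the symmetry is a strict weakening of full modularity; the main task is to isolate from his proof the portion that is unconditional. Once that translation is made, the present application of Theorem~\ref{castle} is immediate.
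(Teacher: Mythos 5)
Your overall skeleton is the same as the paper's: identify the generating series $\psi_{g,\varphi}^{\mathrm{CH}}$ as a symmetric formal Fourier--Jacobi series using Liu's results and then invoke Theorem~\ref{castle} (in its detailed form, Theorem~\ref{heaven}) to remove the convergence hypothesis in Theorem~3.5 of \cite{MR2928563}. However, there is a genuine gap in your first step, and you have located the difficulty in the wrong place. You assert that the Fourier--Jacobi coefficients $\phi_m$ ``arise from subvarieties on which Kudla--Millson-type lifts are already known to produce Hermitian Jacobi forms of the correct weight and index,'' and on that basis place the formal object in $\FMGR$. This is precisely what is \emph{not} already known: Kudla--Millson modularity lives in cohomology, not in the Chow group, and a priori each $\phi_{m,\varphi}$ is only a formal $q$-series valued in $\mathrm{CH}^{g}(X_K)$. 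The definition of a symmetric formal Fourier--Jacobi series requires its coefficients to be honest (absolutely convergent) Hermitian Jacobi forms, so until the convergence of each individual coefficient is established, Theorem~\ref{castle} cannot be applied at all. By contrast, the symmetry condition, which you single out as ``the only nontrivial step,'' comes essentially for free from the unconditional (first) part of Liu's Theorem~3.5 once the coefficients are known to converge.

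The missing argument is the reduction of the convergence of the cogenus-$(g-1)$ Fourier--Jacobi coefficients to the codimension-one case. The paper takes $l = g-1$ (so that each $\phi_{m,\varphi}$ is a series over a one-dimensional variable $\tau_1$ together with $w, z$) and follows Chapter~2 of W.~Zhang's thesis \cite{zhang-2009}: each coefficient is identified with a finite sum of series $\theta_{\lambda,\underline{x}}$ attached to tuples $\underline{x}$ spanning positive definite subspaces; composing a linear functional $\iota$ on $\mathrm{CH}^{g}$ with the shift $i\underline{x}$ by a power of the dual tautological bundle yields a linear functional on the \emph{first} Chow group of a smaller unitary Shimura variety, and the resulting generating series of special divisors converges absolutely and is modular by the unconditional (second) part of Liu's Theorem~3.5. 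Only after this step does the series lie in $\mathrm{FM}_n^{(g,\,g-1)}$, and Theorem~\ref{heaven} (with $k=n$, $l=g-1$, trivial type) finishes the proof. Your proposal, as written, skips this reduction entirely, and also contains a minor confusion in setting up the Shimura variety (it is attached to a Hermitian space of signature $(n-1,1)$, not to $\mathrm{U}(g-l,l)$); the ``bookkeeping'' you defer in your final paragraph is not where the substance lies.
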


Since modular forms of a fixed weight are finite-dimensional, an immediate consequence of Kudla's modularity conjecture is that the $\C$-ranks of the special cycles in the complexification of the Chow groups are explicitly bounded from above, even though we don't know in general whether the Chow groups are finite-dimensional. Furthermore, as an appealing consequence, relations between Fourier coefficients of certain Hermitian modular forms give rise to the corresponding relations between special cycles, which are otherwise not accessible in the literature.

\subsection{Structure of the proof}

The proof of our main result can be separated into three parts.

In the first part (Section~\ref{run}), we show that every symmetric formal Fourier–Jacobi series $f$ of genus $g$, cogenus $1$, and trivial type is algebraic over the graded algebra of Hermitian modular forms of genus $g$. This part is where the norm-Euclidean condition is required.

In the second part (Section~\ref{fly}), we prove in two steps that every such $f$ converges on the whole Hermitian upper half space. First, the algebraicity of $f$ allows us to show the local convergence of $f$ in a neighborhood of any toroidal boundary of the Hermitian modular variety. Then we show that $f$ can be analytically continued to the whole space. We develop a technique of embedding the Siegel upper half space to the Hermitian one in a generic yet rational way to pass to the Siegel case, which was resolved by Bruinier--Raum. 

Finally in the third part (Section~\ref{jump}), to cover all arithmetic types and arbitrary cogenus, we prove in two steps by induction. First, fixing the values of genus and cogenus, we consider a natural pairing of (meromorphic) Hermitian modular forms and (meromorphic) formal symmetric Fourier--Jacobi series. Then, we increase the cogenus from $1$ by induction, using the machinery introduced in Section~\ref{walk}.

\section{Hermitian modular forms and symmetric formal Fourier--Jacobi series}\label{walk}
Our aim in this section is to provide preliminaries for this manuscript and introduce the notion of symmetric formal Fourier--Jacobi series in the Hermitian case. First we recall briefly Hermitian modular forms in Subsections~\ref{harmony} and~\ref{peace}, and introduce notations which are used throughout the manuscript. Then we define Hermitian Jacobi forms of higher degrees and recall a key lemma connecting the Hermitian modular group to certain Jacobi group in Subsections~\ref{tranquility} and~\ref{serenity}. Based on the notations introduced in Subsections~\ref{harmony} and~\ref{peace}, we recall Fourier--Jacobi expansions in Subsection~\ref{placidity} and introduce its formal analogue, symmetric formal Fourier--Jacobi series, in Subsection~\ref{calmness}. Finally, we study a formal analogue of theta decomposition of Jacobi forms and prove a few propositions in Subsections~\ref{Alice} and~\ref{whispering}, which are used in Section~\ref{run} and in the proof of Proposition~\ref{holmes}.

\subsection{Hermitian modular groups}\label{harmony}
Historically in the context of Hermitian modular forms, the unitary group $\mathrm{U}(g, g)$ is defined as a classical group, for instance in \cite{braun-1949}.  In most parts of Sections~\ref{walk}-\ref{jump}, following all prior literature in this field, we employ this classical notion as opposed to the perspective of algebraic groups, but adopt the notation from the latter for coherence of this manuscript. Let $E/\Q$ be an imaginary quadratic field, specified at the beginning of each section, and let $\mathrm{U}(g, g) (\R)$ denote the classical unitary group of signature $(g, g)$ with entries in $\C$, which in fact corresponds to the real points of a reductive group. Similarly, let $\mathrm{U}(g, g) (\Q)$ denote the classical unitary group with entries in $E$, and $\mathrm{U}(g, g) (\Z)$ the one with entries in the ring of integers $\OK \subseteq \C$ for a fixed embedding $\iota: E \lhra \C$. In other words, the classical group $\mathrm{U}(g, g) (\R)$ (resp. $\mathrm{U}(g, g) (\Q)$ and $\mathrm{U}(g, g) (\Z)$) is the subgroup of $\GL{2 g} (\C)$ (resp. $\GL{2 g} (E)$ and $\GL{2 g} (\OK)$) whose elements $\ga$ satisfy
the equation
\begin{linenomath}
\begin{gather}\label{banana}
\ga^\ast J_{g, g} \ga = J_{g, g}\,
\tx{,}
\end{gather}
\end{linenomath}
for the $2g \times 2g$ matrix
\begin{linenomath}
\begin{gather*}
J_{g, g} =
\begin{psmatrix}
0_g & I_g \\
- I_g & 0_g
\end{psmatrix}\tx{.}
\end{gather*}
\end{linenomath} 
Writing $\ga$ furthermore in blocks of $g \times g$ matrices
\begin{linenomath}
\begin{gather*}
\ga =
\begin{pmatrix}
a & b \\
c & d
\end{pmatrix}\tx{,}
\end{gather*}
\end{linenomath}
we can characterize these elements $\ga$ by the condition
\begin{linenomath}
\begin{gather}\label{pear}
a^\ast c = c^\ast a,\, b^\ast d = d^\ast b ,\, \tx{and}\, a^\ast d - c^\ast b = I_g\tx{.}
\end{gather}
\end{linenomath} 
Similar to the Siegel modular group $\Sp{g} (\Z) = \UGZ \cap \Mat{2g}(\Z)$, we call $\UGZ$ the Hermitian modular group of degree $g$. The notation $\mathrm{Herm}_n (A)$ denotes the set of $n \times n$ Hermitian matrices with entries in the ring $A \subseteq \C$. For a matrix $x \in \mathrm{Herm}_n (A)$, the notation $x \geq 0$ (resp. $x > 0$) means $x$ is positive semidefinite (resp. positive definite), and the set of all such matrices are denoted by $\mathrm{Herm}_n (A)_{\geq 0}$ (resp. $\mathrm{Herm}_n (A)_{> 0}$).

\subsection{Hermitian modular forms}\label{peace}

Basic notions on Hermitian modular forms can be found in \cite{braun-1949, braun-1950, braun-1951}. For completeness, we recall them briefly in the case of integral weight and arbitrary arithmetic type.

We define the Hermitian upper half-space of degree $g$, denoted by $\Hg$, to be the set of matrices $\tau \in \Mat{g} (\C)$ such that the Hermitian matrix
\begin{linenomath}
\begin{gather}
    y := \frac{1}{2i} (\tau - \tau^\ast)
\end{gather}
\end{linenomath}
is positive definite. By this definition, the Siegel upper half-space $\cH_g$ is the set of all the symmetric matrices in $\Hg$. Recall that the symplectic group $\Sp{g} (\R) = \UGR \cap \Mat{2g}(\R)$ acts on $\cH_g$  biholomorphically. Similarly, $\UGR$ acts on $\Hg$ biholomorphically via M\"obius transformations
\begin{linenomath}
\begin{gather}\label{blueberry}
    \tau \lmto \ga \tau := (a \tau + b) (c \tau + d)^{-1}
\end{gather}
\end{linenomath}
for 
$\ga =
\begin{psmatrix}
a & b \\
c & d
\end{psmatrix} \in \UGR$. We define the factor of automorphy $j$ via the formula
\begin{linenomath}
\begin{gather*}
    j (\ga, \tau):=\det (c \tau + d)
\end{gather*}
\end{linenomath}
for $\ga = \begin{psmatrix}
a & b \\
c & d
\end{psmatrix} \in \UGR$ and $\tau \in \Hg$. It is well known that $j (\ga, \tau)$ is always nonzero and satisfies the $1$-cocycle condition
\begin{linenomath}
\begin{gather*}
    j(\ga_1 \ga_2, \tau) 
=
j(\ga_1, \ga_2 \tau) j (\ga_2, \tau)
\end{gather*}
\end{linenomath}
for all $\ga_1, \ga_2 \in \UGR$.

Let $g, k$ be integers such that $g \geq 1$, and $\big(\rho, V(\rho)\big)$ a finite-dimensional complex representation of $\UGZ$ that factors through a finite quotient, also called an (arithmetic) type. We define the space of (vector-valued) Hermitian modular forms of degree $g$, weight $k$ and (arithmetic) type $\rho$, to be the space of holomorphic (vector-valued) functions $f: \Hg \lra V(\rho)$ satisfying the modularity condition
\begin{linenomath}
\begin{gather*}
   f(\ga \tau) = j(\ga, \tau)^k \rho(\ga) f(\tau) 
\end{gather*}
\end{linenomath}
for all $\ga \in \UGZ$, and if $g = 1$, being also holomorphic at the cusp of $\rmU (1, 1) (\Z)$. 

If $\rho$ is trivial, $\MR$ is just a direct sum of copies of classical (scalar valued) Hermitian modular forms, and the latter is denoted by $\MA$ in this manuscript. For a finite-index subgroup $\Gamma \subseteq \UGZ$, the space of classical modular forms for $\Gamma$ is denoted by $\MGA$. The graded algebra of classical Hermitian modular forms is denoted by 
\begin{linenomath}
\begin{gather*}
   \MG:= \bigoplus_{k \in \Z} \MA \tx{.}
\end{gather*}
\end{linenomath}
\begin{remark}
In the case of half-integral weight, there exist multiplier systems on some congruence subgroups $\Gamma \subseteq \UGZ$ by the general construction of Deligne \cite{deligne-1996, brylinski-deligne-2001} and the work of Prasad--Rapinchuk \cite{prasad-rapinchuk} and Prasad \cite{prasad-2004}. For the aim of the unitary Kudla conjecture in this manuscript, we work with Hermitian modular forms of integral weights.
\end{remark}

\subsection{Hermitian Jacobi groups}\label{tranquility}
Let $g, l$ be positive integers. Consider a discrete Heisenberg group
\begin{linenomath}
\begin{gather*}
    H_{\OK}^{(g, l)} :=
    \big\{
    [(\lambda, \mu), \kappa] :
    \lambda, \mu \in \MlgZ, 
    \kappa \in \mathrm{Mat}_{l} (\OK),
    \kappa + \mu \lambda^\ast \in \mathrm{Herm}_l (\OK)
    \big\}
    \tx{,}
\end{gather*}
\end{linenomath}
with the following group law
\begin{linenomath}
\begin{gather*}
    [(\lambda, \mu), \kappa]
    [(\lambda^\prime, \mu^\prime), \kappa^\prime]
    =
    [(\lambda + \lambda^\prime, \mu + \mu^\prime), 
    \kappa + \kappa^\prime + \lambda {\mu^\prime}^\ast - \mu {\lambda^\prime}^\ast]
    \tx{.}
\end{gather*}
\end{linenomath}
Note that the condition $\kappa + \mu \lambda^\ast \in \mathrm{Herm}_l (\OK)$ is equivalent to $\kappa - \lambda \mu^\ast \in \mathrm{Herm}_l (\OK)$. There is a natural action of the Hermitian modular group $\UGZ$ on the Heisenberg group $H_{\OK}^{(g, l)}$ via matrix multiplication from the right, and we define the Hermitian Jacobi group $\Ggl$ of genus $g$ and cogenus $l$ to be the semidirect product $\Ggl = \UGZ \ltimes H_{\OK}^{(g, l)}$ with the associated group law. More explicitly, the group law of the Jacobi group $\Ggl$ reads 
\begin{linenomath}
\begin{gather*}
    \big(
    \ga, [(\lambda, \mu), \kappa]
    \big)
    \big(
    \ga^\prime, [(\lambda^\prime, \mu^\prime), \kappa^\prime]
    \big)
    =
    \big(
    \ga \ga^\prime, [(\lambda + \lambda^\prime, \mu + \mu^\prime), 
    \kappa + \kappa^\prime + \lambda \ga^\prime {\mu^\prime}^\ast - \mu \ga^\prime {\lambda^\prime}^\ast]
    \big)
    \tx{.}
\end{gather*}
\end{linenomath}
Based on the standard embeddings of Heisenberg groups, we define an embedding
\begin{linenomath}
\begin{gather}\label{kiwi}
    \Ggl \lhra \rmU(g + l, g + l) (\Z)
\end{gather}
\end{linenomath}
of the Jacobi group into the Hermitian modular group via the formula
\begin{linenomath}
\begin{gather*}
\Bigg(
    \begin{pmatrix}
    a & b \\
    c & d
    \end{pmatrix}, [(\lambda, \mu), \kappa]
\Bigg)
    \mapsto
    \begin{pmatrix}
    a & 0 & b & 0 \\
    0 & I_l & 0 & 0 \\
    c & 0 & d & 0\\
    0 & 0 & 0 & I_l
    \end{pmatrix}
    \begin{pmatrix}
    I_g & 0 & 0 & \mu^\ast \\
    \lambda & I_l & \mu & \kappa \\
    0 & 0 & I_g & - \lambda^\ast\\
    0 & 0 & 0 & I_l
    \end{pmatrix}
    \tx{.}
\end{gather*}
\end{linenomath}

To relate the embedded image of the Jacobi group to the whole Hermitian modular group, we record the following fact. First we define an embedding $\rot : \GL{g}(\cO_E) \lra \UGZ$ of groups via $u \lmto \rot (u) := \begin{psmatrix}
u & 0 \\
0 & {u^\ast}^{-1}
\end{psmatrix}$.
\begin{lemma}\label{toy}
Let $g \geq 2$ be an integer, and let $E$ be an imaginary quadratic field with class number $1$. Then, the Hermitian modular group $\UGZ$ is generated by two subgroups, $\rot \big( \GL{g}(\cO_E) \big)$ and the embedded Jacobi group $\Gamma^{(g - 1, 1)}$ of cogenus $1$ under \eqref{kiwi}. 
\end{lemma}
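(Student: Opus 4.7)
Let $H \subseteq \UGZ$ denote the subgroup generated by $\rot(\GL{g}(\cO_E))$ and the embedded Jacobi group $\Gamma^{(g - 1, 1)}$. I would show $H = \UGZ$ by an orbit-stabilizer argument for the $\UGZ$-action on $\cO_E^{2g}$, centered on the isotropic basis vector $e_g$.

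\emph{Stabilizer identification.} A direct block-matrix analysis shows that the embedded image of $\Gamma^{(g - 1, 1)}$ coincides with $\tx{Stab}_{\UGZ}(e_g)$. Indeed, imposing $\ga e_g = e_g$ on $\ga = \begin{psmatrix} a & b \\ c & d \end{psmatrix} \in \UGZ$ forces the $g$-th column of $a$ to equal the last standard vector of $\cO_E^g$ and the $g$-th column of $c$ to vanish. Combining with the Hermitian identities~\eqref{pear} then forces the $g$-th rows of $c, d$ and the $g$-th column of $b$ into the shape of the image of~\eqref{kiwi}, and the remaining free entries of $\ga$ are read off as the Jacobi data $(\ga', [(\lambda, \mu), \kappa])$, including the reality condition $\kappa + \mu \lambda^* \in \mathrm{Herm}_1(\cO_E)$. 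Hence $\tx{Stab}_{\UGZ}(e_g) = \Gamma^{(g - 1, 1)} \subseteq H$.

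\emph{Transitivity of $H$ on the $\UGZ$-orbit of $e_g$.} By Witt's theorem combined with the PID hypothesis on $\cO_E$, this orbit equals the set of all primitive isotropic vectors in $\cO_E^{2g}$. I would show $H$ reaches this full set by combining three moves. First, rotations carry $e_g$ to every primitive vector of the form $(v_1, 0)$. Second, the composition
\begin{gather*}
\delta_0 := \rot(P_{1, g}) \cdot \tilde J_{g - 1} \cdot \rot(P_{1, g}),
\end{gather*}
where $P_{1, g}$ is the $\GL{g}(\Z)$-transposition of coordinates $1$ and $g$ and $\tilde J_{g - 1}$ is the image under~\eqref{kiwi} of the involution $J_{g - 1}$ in the embedded $\mathrm{U}(g - 1, g - 1)(\Z)$, tracks $e_g \mapsto e_1 \mapsto -e_{g + 1} \mapsto -e_{2g}$; subsequent rotations then move this to $(0, v_2)$ for every primitive $v_2 \in \cO_E^g$. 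Third, all Hermitian translations $T_S = \begin{psmatrix} I_g & S \\ 0 & I_g \end{psmatrix}$ with $S \in \HGKK$ already lie in $H$---assembled from the translations of the embedded $\mathrm{U}(g - 1, g - 1)(\Z)$ together with the Heisenberg elements having $\lambda = 0$, which supply the missing last-row and last-column entries---and carry $(0, v_2)$ to $(S v_2, v_2)$. A short linear-algebra check shows that $S \mapsto S v_2$ is surjective onto $\{w \in \cO_E^g : w^* v_2 \in \Z\}$, which is exactly the set of $w$ for which $(w, v_2)$ is isotropic. Hence $H \cdot e_g$ contains every primitive isotropic vector.

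\emph{Conclusion and main obstacle.} For any $\ga \in \UGZ$, the transitivity produces $\delta \in H$ with $\delta e_g = \ga e_g$, whence $\delta^{-1} \ga \in \tx{Stab}_{\UGZ}(e_g) \subseteq H$ and $\ga \in H$. The main obstacle is the stabilizer identification: although it reduces to finite bookkeeping with~\eqref{pear}, one must carefully match the reality constraint on the Heisenberg parameter $\kappa$ with the constraint arising from Hermiticity of $b^* d$, and similarly verify that the parameter $\lambda$ recovered from the $g$-th row of $a$ is consistent with the conditions on $c$ and $d$ imposed by~\eqref{pear}.
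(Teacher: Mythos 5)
Your orbit--stabilizer strategy is a legitimate alternative to the paper's route (the paper adapts Klingen's Proposition~6, Section~I.3, to the PID setting and then conjugates the embedded $\rmU(1,1)(\Z)$ into the Jacobi group by rotations), and your stabilizer step is essentially correct: if $\ga e_g = e_g$ then the $g$-th column of $\ga$ is $e_g$, invariance of the form forces the last row of $\ga$ to be $(0,\ldots,0,1)$, and the relations \eqref{pear} then recover exactly the image of \eqref{kiwi}, including the reality constraint on $\kappa$. Contrary to what you flag, this is the routine part of the argument.

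The genuine gap is in the transitivity step. Writing vectors in blocks $(w, v_2)$ with $w, v_2 \in \OK^g$, the three moves you combine produce from $e_g$ only vectors with $v_2 = 0$ and $w$ primitive, or vectors of the form $(S v_2, v_2)$ with $v_2$ primitive; in every case the lower block is zero or primitive. But $\UGZ e_g$ contains primitive isotropic vectors whose lower block is nonzero and imprimitive: for $g = 2$, the matrix $\begin{psmatrix} I_2 & 0 \\ S & I_2 \end{psmatrix} \in \UGZ$ with $S = \begin{psmatrix} 0 & 0 \\ 0 & 2 \end{psmatrix} \in \mathrm{Herm}_2(\OK)$ sends $e_2$ to $(0,1,0,2)^{\mathrm{T}}$, whose lower block $(0,2)^{\mathrm{T}}$ is neither zero nor primitive. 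So the sentence ``hence $H \cdot e_g$ contains every primitive isotropic vector'' does not follow from the listed moves, and what the orbit--stabilizer conclusion actually needs, namely $\UGZ e_g \subseteq H \cdot e_g$, is not established. (The appeal to Witt's theorem plus the PID hypothesis to identify $\UGZ e_g$ with all primitive isotropic vectors is also unjustified as stated---Witt's theorem is a statement over fields---but that inclusion is not the one you need anyway.) The missing ingredient is precisely a Euclidean/elementary-divisor descent: using rotations together with the embedded copies of $\rmU(1,1)(\Z)$ (available inside the Jacobi group) to reduce the content of the lower block until it becomes zero or the unit ideal. That descent is the substance of the Klingen-style argument the paper cites, and it is where the class number one hypothesis genuinely enters; without it your proof covers only a proper subset of the orbit.
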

\begin{proof}
For each integer $j$ such that $1 \leq j \leq g$, let $\iota^{(j)}: \rmU (1, 1) (\Z) \lra \UGZ$ be the $j$-th diagonal embedding sending an element $\ga = \begin{psmatrix}
a & b \\
c & d
\end{psmatrix}$ to $\iota^{(j)} (\ga)$, whose $2 \times 2$ submatrix indexed by $(j, j), (j, j + g), (j + g, j), (j + g, j + g)$ is equal to $\ga$, and the remaining entries of $\iota^{(j)} (\ga)$ are those of the identity matrix $I_{2 g}$. Since $\OK$ is a PID, we can follow the lines of Proposition 6 in Section I.3 of \cite{klingen-1990} to show that $\UGZ$ is generated by the embedded subgroups $\rot (\GL{g}(\cO_E))$ and $\iota^{(g)} (\rmU (1, 1) (\Z))$. We complete the proof by observing that any element $\iota^{(j)} (\ga)$ for $\ga \in \rmU (1, 1) (\Z)$ can be transformed to some element in the Jacobi group $\Gamma^{(g - 1, 1)}$ by conjugation with a suitable element of $\rot (\GL{g}(\cO_E))$.
\end{proof}

\subsection{Hermitian Jacobi forms}\label{serenity}
Jacobi forms are closely related to Fourier--Jacobi expansions of automorphic forms. In the case of degree $(1, 1)$ associated to Siegel modular forms, Eichler-Zagier developed a theory of Jacobi forms in their monograph \cite{eichler-zagier-1985}. The case of degree $(1, 1)$ associated to Hermitian modular forms over imaginary quadratic fields was treated by K. Haverkamp \cite{Haverkamp-thesis-1995, MR1377681}. Cases of higher degrees were investigated in \cite{murase-1989, shimura-1978a, jamazaki-1986}, and in the spirit of Eichler-Zagier a theory was built up by C.Ziegler \cite{ziegler-1989}.  To fit the presentation of this manuscript and to simplify the expressions, we define Hermitian Jacobi forms in terms of Hermitian modular forms via the embedding of Jacobi groups into Hermitian modular groups.

Let $g, l$ be positive integers. We define the Hermitian-Jacobi upper half-space $\Hgl$ of genus $g$ and cogenus $l$ to be $\Hgl := \Hg \times \MglC \times \MlgC$, and there is a (biholomorphic) action of the Jacobi group $\Ggl$ on $\Hgl$ via the formula
\begin{linenomath}
\begin{align*}
    &\Bigg(
    \begin{pmatrix}
    a & b \\
    c & d
    \end{pmatrix}, [(\lambda, \mu), \kappa]
    \Bigg)
    (\tau, w, z)\\
    =&
    \big(
    (a \tau + b)(c \tau + d)^{-1}, (c \tau + d)^{-1} (w + \tau^\ast \lambda^\ast + \mu^\ast), (z + \lambda \tau + \mu) (c \tau + d)^{-1}
    \big)
    \tx{.}
\end{align*}
\end{linenomath}
Note that the Hermitian upper half space $\bH_{g + l}$ projects to the Jacobi upper half space $\Hgl$ by restricting to the corresponding matrix blocks. For an imaginary quadratic field $E/\Q$, we recall the dual lattice $\OK^\#$ of the lattice of integers $\OK$, also known as the inverse different ideal of $E$, which in our case can be explicitly written as $\OK^\# = \frac{1}{\sqrt{D_E}} \OK$, where $D_E (< 0)$ is the discriminant of $E$. We say a matrix $m$ is semi-integral (over $E$) if the diagonal entries of $m$ are in the ring $\OK$ and the off-diagonal entries  are in the dual lattice $\OK^\#$. 

Let $k$ be an integer, and let $m$ be an $l \times l$ semi-integral Hermitian matrix. Let $\rho$ be a complex finite dimensional representation of $\Ggl$ under the embedding \eqref{kiwi}. We say a holomorphic vector-valued function $\phi : \Hgl \lra V (\rho)$ is a Hermitian Jacobi form of genus $g$, weight $k$, index $m$, and type $\rho$, if the function $f: \bH_{g + l} \lra V (\rho)$ defined via
\begin{linenomath}
\begin{gather*}
    f \Bigg(
    \begin{pmatrix}
    \tau & w\\
    z & \tau^\prime
    \end{pmatrix}
    \Bigg)
    =
    \phi (\tau, w, z)
    e (m \tau^\prime)
\end{gather*}
\end{linenomath}
transforms as a Hermitian modular form for the image of \eqref{kiwi}, of degree $g + l$, weight $k$ and type $\rho$, and if $g = 1$, the function $\phi (\tau, \tau r + r^\prime, s \tau + s^\prime)$ also being bounded (under any norm as $\dim V(\rho) < \infty$) at the cusp for all row vectors $r, r^\prime$ and column vectors $s, s^\prime$ of dimension $l$ over $E$.  The space of Hermitian Jacobi forms of genus $g$, weight $k$, index $m$, and type $\rho$ is denoted by $\JR$ in this manuscript.

\subsection{Fourier--Jacobi expansions}\label{placidity}
Let $g, l$ be integers such that $1 \leq l \leq g - 1$. Fourier--Jacobi expansions of Hermitian modular forms are partial Fourier expansions with respect to the matrix block $\tau_2$ of the variable $\tau = \begin{psmatrix}
\tau_1 & z\\
w & \tau_2
\end{psmatrix}$.
More precisely, for a Hermitian modular form $f$ of degree $g$, weight $k$, and type $\rho$, there is a Fourier expansion (\cite{braun-1949}) in the form of
\begin{linenomath}
\begin{gather}\label{blackberry}
    f (\tau) = 
\sum_{t \in \HGK_{\geq 0}} c (f; t) e (t \tau)
\tx{,}
\end{gather}
\end{linenomath}
for $c (f; t) \in V(\rho)$. In particular, when $\rho$ is the trivial representation, the sum is supported on semi-integral positive semidefinite Hermitian matrices $t$. We write
\begin{linenomath}
 \begin{gather*}
     \tau = \begin{psmatrix}
\tau_1 & w\\
z & \tau_2
\end{psmatrix} \in \Hg
\quad \tx{and} \quad
t = \begin{psmatrix}
n & r\\
r^\ast & m
\end{psmatrix} \in \HGK_{\geq 0}
 \end{gather*}
\end{linenomath}
for $\tau_1 \in \bH_{g - l}, \tau_2 \in \bH_{l}, w \in  \Mat{g - l, l} (\C), z \in \Mat{l, g - l} (\C), n \in \mathrm{Herm}_{g - l} (E)_{\geq 0}, r \in \mathrm{Mat}_{g - l, l} (E)$, and $m \in \HLK_{\geq 0}$. Consequently, we arrange the sum in \eqref{blackberry} into the form
\begin{linenomath}
\begin{gather}\label{apricot}
   f (\tau) = 
\sum_{m \in \HLK_{\geq 0}} \phi_m (\tau_1, w, z) e (m \tau_2)
\end{gather}
\end{linenomath}
for functions $\phi_m : \mathbb{H}_{g - l, l} \lra V(\rho)$ with the ordinary Fourier expansion
\begin{linenomath}
\begin{gather}\label{lemon} 
    \phi_m (\tau_1, w, z)
    =
    \sum_{\substack{n \in \mathrm{Herm}_{g - l} (E)_{\geq 0}\\
    r \in \mathrm{Mat}_{g - l, l} (E)}}
    c (\phi_m; n, r) e (n \tau_1 + r z) e (r^\ast w)
    \tx{,}
\end{gather}
\end{linenomath}
whose Fourier coefficients $c (\phi_m; n, r)$ are
determined by the Fourier coefficients of $f$ via the equation
\begin{linenomath}
\begin{gather}\label{han}
    c (\phi_m; n, r)
    =
    c \Bigg(f; \begin{pmatrix}
    n & r \\
    r^\ast &  m
    \end{pmatrix}  \Bigg)
    \tx{.}
\end{gather}
\end{linenomath}
We say Equation~\eqref{apricot} is the Fourier--Jacobi expansion of $f$ of cogenus $l$, and $\phi_m$ is the $m$-th Fourier--Jacobi coefficient of $f$, or the Fourier--Jacobi coefficient of index $m$. Furthermore, it follows from the definition that any Jacobi form $\phi \in \JR$ has the Fourier expansion in the form of \eqref{lemon}.

\subsection{Symmetric formal Fourier--Jacobi series}\label{calmness}
We motivate in this subsection the notion of symmetric formal Fourier--Jacobi series for the Hermitian modular group, by combining two features of the Fourier--Jacobi expansion of a Hermitian modular form: each Fourier--Jacobi coefficient is a Hermitian Jacobi form, and they satisfy a $\GL{g} (\OK)$-symmetry condition. A priori, for formal series these two features ``almost'' characterize ``formal modularity'', that is, invariance under the action of Hermitian modular group without assuming absolute convergence. Indeed, if $\OK$ is a PID, the group-theoretic Lemma~\ref{toy} amounts to saying that formal modularity can be reformulated in terms of two conditions: invariance under the embedded Jacobi group of cogenus $1$, and invariance under the embedded subgroup $\rot \big(\GL{g} (\OK) \big)$.

It is clear that the invariance of a formal Fourier series $f$ in the form of \eqref{blackberry} under the weight $k$, type $\rho$-slash action of $\rot \big(\GL{g} (\OK)\big)$ is equivalent to a symmetry condition for the formal Fourier coefficients of $f$, which reads
\begin{linenomath}
\begin{gather}\label{zhou}
    \rho \Bigg(
    \begin{pmatrix}
u & 0 \\
0 & {u^\ast}^{-1}
\end{pmatrix}
    \Bigg)
    c (f; u^\ast t u)
    = \big(\det u^\ast \big)^k 
    c (f; t)
\end{gather}
\end{linenomath}
for all $u \in \GL{g} (\OK)$.

Moreover, the invariance of $f$ under the weight $k$, type $\rho$-slash action of the embedded Jacobi group $\Ggl$ under \eqref{kiwi} amounts to saying that $f$ can be rearranged into a formal series in the form of \eqref{apricot}, such that each coefficient $\phi_m$ is a formal Hermitian Jacobi form of genus $g - l$, weight $k$, index $k$, and type $\rho$.

Combining these two aspects, we define a symmetric formal Fourier--Jacobi series $f$ of degree $g$, cogenus $l$, weight $k$, and type $\rho$, to be a formal series of Hermitian Jacobi forms $\phi_m \in \mathrm{J}_{k, m}^{(g - l)} (\rho)$ in the form of \eqref{apricot}, such that its Fourier coefficients $c (f; t)$, defined by the equation
\begin{linenomath}
\begin{gather*}
    c \Bigg(f; \begin{pmatrix}
    n & r \\
    r^\ast &  m
    \end{pmatrix}  \Bigg)
    =
     c (\phi_m; n, r)
     \tx{,}
\end{gather*}
\end{linenomath}
satisfy the symmetry condition \eqref{zhou}. 

We write $\FMR$ for the vector space of such symmetric formal Fourier--Jacobi series, and when the cogenus $l = 1$ or $\rho$ is the $1$-dimensional trivial representation, we suppress them in the notation. Furthermore, we let
\begin{linenomath}
\begin{gather*}
    \FMG (\rho) := \bigoplus_{k \in \Z} \FMAA
\end{gather*}
\end{linenomath}
denote the graded module over the graded ring $\MGR$ of classical modular forms of type $\rho$, and note that $\FMG$ is actually a graded algebra over $\MG$. 
\begin{remark}
Of course, the readers will not confuse the degree $g$ in $\FMR$ for the genus $g$ in $\JR$. In fact, for a symmetric formal Fourier--Jacobi series $f \in \FMR$, the $m$-th coefficient $\phi_m$ of $f$ is a Jacobi form of genus $g - l$ and index $m$, more precisely $\phi_m \in \mathrm{J}_{k, m}^{(g - l)} (\rho)$.
\end{remark}

\subsection{Formal Fourier--Jacobi coefficients}\label{Alice}
Just like Fourier--Jacobi coefficients of arbitrary cogenus are attached to a Hermitian modular form, we also associate formal Fourier--Jacobi coefficients of various cogenus to a formal Fourier--Jacobi series. 

Let $1 \leq l^\prime < l < g$ be positive integers, and let $f \in \MR$ be a Hermitian modular form. For $\tau = \begin{psmatrix} \tau_1 & w \\ z & \tau_2
\end{psmatrix} \in \Hg$, we refine the decomposition of the matrix $\tau$ further into blocks
\begin{linenomath}
\begin{gather*}
    \tau = \begin{pmatrix} \tau_{11} & w_{11} & w_{12}\\
    z_{11} & \tau_{12} & w_{22}\\
    z_{12} & z_{22} & \tau_2
    \end{pmatrix} 
    \tx{,}
\end{gather*}
\end{linenomath}
so that the sizes of $\tau_{11}, \tau_{12}$, and $\tau_2$ are $(g - l) \times (g - l), (l - l^\prime) \times (l - l^\prime)$, and $l^\prime \times l^\prime$, respectively, and the sizes of the off-diagonal blocks are determined in the ordinary way. We write $w_1$ for the $(g - l)\times l$ matrix $(w_{11}\, w_{12})$, and $z_1$ for the $l \times (g - l)$ matrix $\begin{psmatrix}z_{11} \\ z_{12}\end{psmatrix}$.

For a formal Fourier series $f$ in the form of \eqref{blackberry}, the formal Fourier--Jacobi expansion of cogenus $l$ is given by \eqref{apricot}, with the $m$-th formal Fourier--Jacobi coefficient $\psi_m$ defined in \eqref{lemon} and \eqref{han}, where we replace the notation $\phi_m$ by $\psi_m$, to emphasize it is a formal series. In particular, for a symmetric formal Fourier--Jacobi series $f \in \FMR$ with Fourier--Jacobi coefficients $\phi_m$, and for each Hermitian matrix $m^\prime \in \mathrm{Herm}_{l^\prime} (E)_{\geq 0}$, the formal Fourier--Jacobi coefficient $\psi_{m^\prime}$ of index $m^\prime$ is related to $\phi_m$ via an equation of formal Fourier-series, namely
\begin{linenomath}
\begin{gather}\label{John} 
    \psi_{m^\prime} (\tau_1, w, z)
    =
    \sum_{\substack{n^\prime \in \mathrm{Herm}_{l - l^\prime} (E)_{\geq 0}\\
    r^\prime \in \mathrm{Mat}_{l - l^\prime, l^\prime } (E)}}
    \phi_{\begin{psmatrix} n^\prime & r^\prime \\ {r^\prime}^\ast & m^\prime \end{psmatrix}}
    (\tau_{11}, w_1, z_1)
    e (n^\prime \tau_{12} + r^\prime z_{22}) e ({r^\prime}^\ast w_{22})
    \tx{.}
\end{gather}
\end{linenomath}
In particular, if $l^\prime = 1$, we record the following fact about the formal Fourier--Jacobi coefficient $\psi_0$ of index $0 \in \mathrm{Herm}_{1} (E)_{\geq 0}$.

\begin{proposition}\label{finnegan}
Let $f \in \FMC$ be a symmetric formal Fourier--Jacobi series of trivial type. Then the formal Fourier--Jacobi coefficient $\psi_0$ defined by Equation \eqref{John} is a symmetric Fourier–Jacobi series of degree $g - 1$ and cogenus $l - 1$ (under an identification), namely $\psi_0 \in \mathrm{FM}_k^{(g - 1, l - 1)}$.
\end{proposition}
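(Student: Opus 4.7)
The plan is to substitute $m' = 0$ into Equation~\eqref{John}, use positive semi-definiteness to collapse the double sum to a single one, descend the resulting ``degenerate'' Jacobi forms from cogenus $l$ to cogenus $l-1$, and verify the $\GL{g-1}(\cO_E)$-symmetry condition \eqref{zhou} by restriction to a block-diagonal subgroup of $\GL{g}(\cO_E)$. First I would invoke the standard fact that any Hermitian Jacobi form $\phi_{\begin{psmatrix} n' & r' \\ {r'}^\ast & m' \end{psmatrix}}$ has Fourier support on pairs $(n, r)$ making $\begin{psmatrix} n & r \\ r^\ast & \begin{psmatrix} n' & r' \\ {r'}^\ast & m' \end{psmatrix} \end{psmatrix} \geq 0$, so that the inner index block must itself be positive semi-definite.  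With $m' = 0$, a vanishing diagonal entry of a positive semi-definite matrix forces the associated row and column to vanish, i.e.\ $r' = 0$. Hence the sum in \eqref{John} collapses to
\[
\psi_0(\tau_1, w, z) = \sum_{n' \in \mathrm{Herm}_{l-1}(E)_{\geq 0}} \phi_{\begin{psmatrix} n' & 0 \\ 0 & 0 \end{psmatrix}}(\tau_{11}, w_1, z_1)\, e(n' \tau_{12}),
\]
which is visibly independent of the ``tail'' coordinates $(\tau_2, w_{22}, z_{22})$.

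For the second step, applying the same positive semi-definiteness argument to each Jacobi form $\phi_{\begin{psmatrix} n' & 0 \\ 0 & 0 \end{psmatrix}}$ with degenerate index, its Fourier coefficients vanish unless the last column of $r$ is zero; consequently $\phi_{\begin{psmatrix} n' & 0 \\ 0 & 0 \end{psmatrix}}$ is independent of the last column $w_{12}$ of $w_1$ and the last row $z_{12}$ of $z_1$. I would then track the Heisenberg and modular transformation laws through the natural inclusion $\Gamma^{(g-l,\, l-1)} \hookrightarrow \Gamma^{(g-l,\, l)}$ that pads the Jacobi data with a zero column/row, under which the weight-$k$, index-$\begin{psmatrix} n' & 0 \\ 0 & 0 \end{psmatrix}$ transformation law restricts to the weight-$k$, index-$n'$ transformation law of a cogenus-$(l-1)$ Jacobi form; this yields an $\eta_{n'} \in \mathrm{J}_{k, n'}^{(g-l)}$ with $\phi_{\begin{psmatrix} n' & 0 \\ 0 & 0 \end{psmatrix}} = \eta_{n'}$ under the identification. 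Substituting the $\eta_{n'}$ presents $\psi_0$ as
\[
\psi_0 = \sum_{n' \in \mathrm{Herm}_{l-1}(E)_{\geq 0}} \eta_{n'}(\tau_{11}, w_{11}, z_{11})\, e(n' \tau_{12})
\]
in the exact shape of a cogenus-$(l-1)$ formal Fourier--Jacobi expansion on $\bH_{g-1}$ with coordinate $\tau_1 = \begin{psmatrix} \tau_{11} & w_{11} \\ z_{11} & \tau_{12} \end{psmatrix}$.

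For the final step, I would embed $u_1 \in \GL{g-1}(\cO_E)$ into $\GL{g}(\cO_E)$ as $u = \begin{psmatrix} u_1 & 0 \\ 0 & 1 \end{psmatrix}$ and apply the symmetry condition \eqref{zhou} for $f$ to Fourier indices of the shape $t = \begin{psmatrix} t_1 & 0 \\ 0 & 0 \end{psmatrix}$ with $t_1 \in \mathrm{Herm}_{g-1}(E)_{\geq 0}$ semi-integral; since $u^\ast t u = \begin{psmatrix} u_1^\ast t_1 u_1 & 0 \\ 0 & 0 \end{psmatrix}$, $\det u^\ast = \det u_1^\ast$, and $c(\psi_0; t_1) = c(f; t)$ directly by construction, the required symmetry for $\psi_0$ follows immediately. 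The hard part, I expect, will be the descent in the middle step: while intuitively clear, verifying cleanly that the Heisenberg action on $\phi_{\begin{psmatrix} n' & 0 \\ 0 & 0 \end{psmatrix}}$ restricts to the Jacobi action of cogenus $l-1$ and index $n'$ without spurious phase factors requires careful bookkeeping of the automorphy factors under the embedding of the Jacobi groups.
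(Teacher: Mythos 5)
Your proposal is correct and follows essentially the same route as the paper's proof: positive semidefiniteness of the index forces $r^\prime = 0$, the degenerate-index Jacobi forms $\phi_{\begin{psmatrix} n^\prime & 0 \\ 0 & 0 \end{psmatrix}}$ are independent of $w_{12}, z_{12}$ and are identified with elements of $\mathrm{J}_{k, n^\prime}^{(g - l)}$, and the $\GL{g-1}(\cO_E)$-symmetry is obtained by restricting \eqref{zhou} to $u = \begin{psmatrix} u_1 & 0 \\ 0 & 1 \end{psmatrix}$. The only cosmetic difference is that you deduce the independence of the extra elliptic variables from a Koecher-type support condition on the Fourier coefficients, whereas the paper reads off constancy in $w_{12}$ and $z_{12}$ directly from the transformation law.
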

\begin{proof}
For any Hermitian matrix $\begin{psmatrix} n^\prime & r^\prime \\ {r^\prime}^\ast & 0 \end{psmatrix}$ that occurs in Equation \eqref{John}, it is positive semidefinite, hence $r^\prime = 0$. Moreover, for any index $n^\prime \in \mathrm{Herm}_{l - 1} (E)_{\geq 0}$, as $\phi_{\begin{psmatrix} n^\prime & 0 \\ 0 & 0 \end{psmatrix}}$ is a Hermitian Jacobi form,  by the transformation law it must be a constant function in $w_{12}$ and $z_{12}$. In particular, $\phi_{\begin{psmatrix} n^\prime & 0 \\ 0 & 0 \end{psmatrix}}$ can be identified as a Jacobi form of genus $g - l$ and index $n^\prime$, which we write as $\phi_{n^\prime}$. Therefore, $\psi_0$ can be identified as a formal Fourier--Jacobi series
\begin{linenomath}
\begin{gather*}
    \psi_0 (\tau_1) = 
    \sum_{n^\prime \in \mathrm{Herm}_{l - 1} (E)_{\geq 0}  }
    \phi_{n^\prime} (\tau_{11}, w_{11}, z_{11})
    e (n^\prime \tau_{12}) 
    \tx{.}
\end{gather*}
\end{linenomath}
The symmetry condition of the formal Fourier--Jacobi series $\psi_0$ under the action of the embedded subgroup $\GL{g - 1} (\OK)$ follows from the one of $f$ under the action of $\GL{g} (\OK)$.
\end{proof}

\subsection{Formal theta decomposition}\label{whispering}
We show in this subsection that the classical theta decomposition of Jacobi forms has a formal analogue, namely the formal theta decomposition of formal Fourier--Jacobi coefficients.

Let $g, l$ be arbitrary positive integers, and let $m \in \mathrm{Herm}_l (E)_{> 0}$ be a semi-integral positive definite Hermitian matrix, which we also view as an integral positive definite quadratic form. Let $\Delta_{g} (m)$ denote the finite abelian group
\begin{linenomath}
 \begin{gather*}
     \Delta_{g} (m) = \MglA ({\OK}^{\#}) / \MglZ m
     \tx{,}
 \end{gather*}
\end{linenomath}
and let $\rho_m^{(g)}$ be the Weil representation of $\UGZ$ associated with $\Delta_{g} (m)$ on $\C [\Delta_{g} (m)]$. Recall that
the theta series $\theta_{m, s}^{(g)}: \Hgl \lra \C$ of genus $g$, integral quadratic form $m$, and shift $s \in \Delta_{g} (m)$, is defined by \begin{linenomath}
\begin{gather}\label{bright}
    \theta_{m, s}^{(g)} (\tau, w, z) 
    = \sum_{r \in s + \MglZ m}
    e (r m^{-1} r^\ast \tau + r z) e(r^\ast w)
    \tx{.}
\end{gather}
\end{linenomath}

It is a classical result (Proposition 3.5 of \cite{shimura-1978a}; see also Theorem 5.1 of \cite{eichler-zagier-1985}) that the vector-valued theta series ${\big(\theta_{m, s}^{(g)}\big)}_{s \in \Delta_{g} (m)}$ transforms as a vector-valued Hermitian Jacobi form of weight $l$ and type ${\big(\rho_m^{(g)}\big)}^{\vee}$. Furthermore, any Hermitian Jacobi form $\phi \in \JA$ can be  uniquely written as a sum
\begin{linenomath}
\begin{gather}\label{Nadal}
\phi (\tau, w, z) = 
\sum_{s \in \Delta_{g} (m)}
h_s (\tau)  \theta_{m, s}^{(g)} (\tau, w, z)
\end{gather}
\end{linenomath}
for some holomorphic functions $h_s$, and these functions are the components of a vector-valued Hermitian modular form of type $\rho_m^{(g)}$. 

Let $1 \leq l^\prime < l < g$ be positive integers. For a symmetric formal Fourier--Jacobi series $f \in \FMC$ with Fourier--Jacobi coefficients $\phi_m$ for $m \in \mathrm{Herm}_{l}(E)_{\geq 0}$, we construct the formal theta decomposition of its formal Fourier--Jacobi coefficients. Let $m^\prime \in \mathrm{Herm}_{l^\prime} (E)_{\geq 0}$ be a positive semidefinite Hermitian matrix, and let $\psi_{m^\prime}$ be the $m^\prime$-th formal Fourier–Jacobi coefficient of $f$. Since the formal Fourier coefficients  of $f$ satisfy by definition the symmetry condition~\eqref{zhou}, setting $u = \begin{psmatrix}I_{g - l^\prime} & 0 \\
\lambda^\ast & I_{l^\prime}
\end{psmatrix}$ for $\lambda \in \mathrm{Mat}_{g - l^\prime, l^\prime} (\OK)$ in \eqref{zhou}, we find a symmetry condition for the formal Fourier coefficients of $\psi_{m^\prime}$, namely
\begin{linenomath}
\begin{gather}\label{Bob}
    c (\psi_{m^\prime}; n^\prime + r^\prime {m^\prime}^{-1} {r^\prime}^\ast, r^\prime)
    =
    c (\psi_{m^\prime}; n^\prime + r^{\prime\prime} {m^\prime}^{-1} {r^{\prime\prime}}^\ast, r^{\prime\prime})
\end{gather}
\end{linenomath}
for all $n^\prime \in \mathrm{Herm}_{g - l^\prime} (E)_{\geq 0}$ and all $r^\prime, r^{\prime\prime} \in \mathrm{Mat}_{g - l^\prime, l^\prime} (E)$ such that $r^{\prime\prime} = r^\prime + \lambda m^\prime$ for some  $\lambda \in \mathrm{Mat}_{g - l^\prime, l^\prime} (\OK)$. In particular, for each $s^\prime \in \Delta_{g - l^\prime} (m^\prime)$, Equation~\eqref{Bob} defines a formal Fourier series $h_{m^\prime, s^\prime}$ in $\tau_1$ via its formal Fourier coefficients
\begin{linenomath}
\begin{gather}\label{Caesar}
    c (h_{m^\prime, s^\prime}; n^\prime) :=
    c (\psi_{m^\prime}; n^\prime + r^\prime {m^\prime}^{-1} {r^\prime}^\ast, r^\prime)
    \tx{,}
\end{gather}
\end{linenomath}
for any choice of $r^\prime \in s^\prime + \mathrm{Mat}_{g - l^\prime, l^\prime} (\OK) m^\prime$ and all $n^\prime \in \mathrm{Herm}_{g - l^\prime} (E)_{\geq 0}$. It then follows that
\begin{linenomath}
\begin{gather}\label{Celine}
\psi_{m^\prime} (\tau_1, w, z) = 
\sum_{s^\prime \in \Delta_{g - l^\prime} (m^\prime)}
h_{m^\prime, s^\prime} (\tau_1)  \theta_{m^\prime, s^\prime}^{(g - l^\prime)} (\tau_1, w, z)
\end{gather}
\end{linenomath}
holds as an equation of formal Fourier series.

Our aim is to show that the vector-valued formal Fourier series $(h_{m^\prime, s^\prime})_{s^\prime}$ is in fact a symmetric formal Fourier--Jacobi series. This fact can be proved for arbitrary cogenus $l^\prime$; for simplicity we only do the case $l^\prime = l - 1$ used in Section~\ref{jump}.  We separate our proof into the following two lemmas, and state the conclusion after that.
\begin{lemma}\label{Nathan}
Fix $l^\prime = l - 1$. Then, the formal vector-valued Fourier series $h_{m^\prime} = (h_{m^\prime, s^\prime})_{s^\prime \in \Delta_{g - l^\prime} (m^\prime)}$ defined by Equation~\eqref{Caesar} satisfies the symmetry condition \eqref{zhou} for the Weil representation $\rho = \rho_{m^\prime}^{(g - l^\prime)}$.
\end{lemma}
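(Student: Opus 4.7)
The plan is to transport the $\GL{g}(\OK)$-symmetry of the formal Fourier coefficients of $f$ (given by \eqref{zhou} with trivial type, since $f \in \FMC$) down to a symmetry of the components $h_{m^\prime, s^\prime}$ via the definition \eqref{Caesar}, and then recognize the resulting identity as \eqref{zhou} for the Weil representation $\rho_{m^\prime}^{(g - l^\prime)}$ using the known transformation law of the vector-valued theta series $\bigl(\theta_{m^\prime, s^\prime}^{(g - l^\prime)}\bigr)_{s^\prime}$.

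Given $u \in \GL{g - l^\prime}(\OK)$, I extend it to $\tilde u = \begin{psmatrix} u & 0 \\ 0 & I_{l^\prime} \end{psmatrix} \in \GL{g}(\OK)$ and apply \eqref{zhou} for $f$ at the matrix
\[
t \;=\; \begin{pmatrix} n^\prime + r^\prime (m^\prime)^{-1} (r^\prime)^\ast & r^\prime \\ (r^\prime)^\ast & m^\prime \end{pmatrix},
\]
where $n^\prime \in \mathrm{Herm}_{g - l^\prime}(E)_{\geq 0}$ and $r^\prime \in s^\prime + \mathrm{Mat}_{g - l^\prime, l^\prime}(\OK)\, m^\prime$ represents $s^\prime \in \Delta_{g - l^\prime}(m^\prime)$. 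A direct block computation yields
\[
\tilde u^\ast\, t\, \tilde u \;=\; \begin{pmatrix} u^\ast n^\prime u + r^{\prime\prime} (m^\prime)^{-1} (r^{\prime\prime})^\ast & r^{\prime\prime} \\ (r^{\prime\prime})^\ast & m^\prime \end{pmatrix}, \qquad r^{\prime\prime} := u^\ast r^\prime,
\]
and since $u^\ast \in \GL{g - l^\prime}(\OK)$ preserves the lattices $\mathrm{Mat}_{g - l^\prime, l^\prime}(\OK^\#)$ and $\mathrm{Mat}_{g - l^\prime, l^\prime}(\OK)\, m^\prime$, the class of $r^{\prime\prime}$ in $\Delta_{g - l^\prime}(m^\prime)$ is $u^\ast s^\prime$. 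Translating both sides of \eqref{zhou} through \eqref{Caesar}, this gives the scalar identity
\[
c\bigl(h_{m^\prime, u^\ast s^\prime};\, u^\ast n^\prime u\bigr) \;=\; (\det u^\ast)^k\, c\bigl(h_{m^\prime, s^\prime};\, n^\prime\bigr)
\]
valid for all admissible $u$, $s^\prime$, and $n^\prime$.

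It remains to recognize this as \eqref{zhou} applied to the vector-valued series $h_{m^\prime}$ at weight $k - l^\prime$ and type $\rho_{m^\prime}^{(g - l^\prime)}$. A change of summation variable in the definition \eqref{bright} of $\theta_{m^\prime, s^\prime}^{(g - l^\prime)}$, combined with the action of $\rot(u)$ on $\bH_g$ through the embedding \eqref{kiwi}, shows that the vector-valued theta transforms with components permuted by $s^\prime \mapsto u^\ast s^\prime$. Comparing this with the weight $l^\prime$ and dual type $\bigl(\rho_{m^\prime}^{(g - l^\prime)}\bigr)^\vee$ of the theta series recalled in Subsection~\ref{whispering}, and using that $\det u \cdot \det u^\ast = 1$ since $\det u \in \OK^\times$ is a root of unity, one pins down the action of the Weil representation as $\rho_{m^\prime}^{(g - l^\prime)}(\rot(u))\, e_{s^\prime} = (\det u)^{l^\prime}\, e_{u^{-\ast} s^\prime}$. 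Substituting this formula into \eqref{zhou} at weight $k - l^\prime$ reproduces precisely the displayed scalar identity, completing the proof.

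The substantive step is this last identification of the rotation action in $\rho_{m^\prime}^{(g - l^\prime)}$, which both dictates the correct weight $k - l^\prime$ of $h_{m^\prime}$ and accounts for the apparent mismatch between the exponent $k$ in the derived identity and the expected exponent $k - l^\prime$ in \eqref{zhou}. Apart from this bookkeeping of determinantal characters, the conclusion is a purely formal consequence of the $\GL{g}(\OK)$-symmetry of $f$ applied to the block-diagonal subgroup $\tilde u = \mathrm{diag}(u, I_{l^\prime})$; in particular, the restriction $l^\prime = l - 1$ plays no essential role in the argument, consistent with the remark preceding the lemma.
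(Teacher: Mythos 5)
Your proposal is correct and follows essentially the same route as the paper: specialize the symmetry condition \eqref{zhou} of $f$ to the block-diagonal element $\begin{psmatrix} u & 0 \\ 0 & I_{l^\prime}\end{psmatrix}$, observe that conjugation sends the data $(n^\prime, r^\prime)$ to $(u^\ast n^\prime u, u^\ast r^\prime)$ with $u^\ast r^\prime$ representing the class $u^\ast s^\prime$, and read off the identity $c(h_{m^\prime, u^\ast s^\prime}; u^\ast n^\prime u) = (\det u^\ast)^k\, c(h_{m^\prime, s^\prime}; n^\prime)$ via \eqref{Caesar}. The only difference is that you make explicit the determinantal bookkeeping identifying this scalar identity with \eqref{zhou} at weight $k - l^\prime$ for the rotation action of $\rho_{m^\prime}^{(g - l^\prime)}$, a step the paper leaves implicit, so your argument is if anything slightly more complete.
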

\begin{proof}
Setting $u = \begin{psmatrix}
u^\prime & 0\\
0 & I_{l^\prime}
\end{psmatrix}$ for $u^\prime \in \GL{g - l^\prime} (\OK)$ in the symmetry condition~\eqref{zhou} of $f \in \FMC$, we find a symmetry condition
\begin{linenomath}
\begin{gather}\label{Leon}
    c (\psi_{m^\prime}; {u^\prime}^\ast n^\prime u^\prime, {u^\prime}^\ast r^\prime)
    =
    (\det {u^\prime}^\ast)^k
    c (\psi_{m^\prime}; n^\prime, r^\prime)
\end{gather}
\end{linenomath}
for all $n^\prime \in \mathrm{Herm}_{g - l^\prime} (E)_{\geq 0}$, $r^\prime \in \mathrm{Mat}_{g - l^\prime, l^\prime} (\OK)$ , and $u^\prime \in \GL{g - l^\prime} (\OK)$. Combining the definition \eqref{Caesar} of $h_{m^\prime, s^\prime}$ and Equation~\eqref{Leon}, we see that 
\begin{linenomath}
\begin{gather*}
    c (h_{m^\prime, {u^\prime}^\ast s^\prime}; {u^\prime}^\ast n^\prime u^\prime)
    =
    (\det {u^\prime}^\ast)^k
    c (h_{m^\prime, s^\prime}; n^\prime)
\end{gather*}
\end{linenomath}
holds for all $n^\prime \in \mathrm{Herm}_{g - l^\prime} (E)_{\geq 0}$, $s^\prime \in \Delta_{g - l^\prime} (m^\prime)$, and $u^\prime \in \GL{g - l^\prime} (\OK)$, as desired.
\end{proof}
Next, for the formal Fourier series $h_{m^\prime, s^\prime}$ defined by \eqref{Caesar}, we consider their cogenus-$1$ formal Fourier--Jacobi expansions
\begin{linenomath}
\begin{gather}\label{Sasha}
    h_{m^\prime, s^\prime}(\tau_1)
    =
    \sum_{n \in \Q_{\geq 0}}
    \psi_{m^\prime, s^\prime, n} (\tau_{11}, w_{11}, z_{11})
    e(n \tau_{12})
    \tx{,}
\end{gather}
\end{linenomath}
and relate their formal Fourier--Jacobi coefficients $\psi_{m^\prime, s^\prime, n}$ to the Fourier--Jacobi coefficients $\phi_m$ of $f \in \FMC$. In fact, Equations~\eqref{John} and \eqref{Celine} provide such a relation. Inserting Equation~\eqref{Sasha} in Equation~\eqref{Celine}, and writing $s^\prime \in \Delta_{g - l^\prime} (m^\prime)$ in the form $\begin{psmatrix}s_1^\prime \\ s_2^\prime\end{psmatrix}$ for 
$s_1^\prime \in \Delta_{g - l} (m^\prime)$ and $s_2^\prime \in \Delta_{1} (m^\prime)$, we obtain the following result after comparing the formal Fourier coefficient at $e (n^\prime \tau_{12} + r^\prime z_{22}) e({r^\prime}^\ast w_{22})$ with Equation~\eqref{John}, and simplifying the expression.
\begin{lemma}\label{Anastasia}
Let Equation~\eqref{Sasha} be the formal Fourier--Jacobi expansion of cogenus $1$ of $h_{m^\prime, s^\prime}$. Then, for any shift $s_2^\prime \in \Delta_{1} (m^\prime)$, every representative $r^\prime \in s_2^\prime + \mathrm{Mat}_{1, l^\prime} (\OK) m^\prime$, and every integer $n^\prime \geq 0$, the partial theta decomposition of Fourier--Jacobi coefficients
\begin{linenomath}
\begin{align}\label{Misha}
    &\phi_{\begin{psmatrix}n^\prime & r^\prime \\
    {r^\prime}^\ast & m^\prime \end{psmatrix}} (\tau_{11}, w_1, z_1)
    =\\
    &\sum_{s_1^\prime \in \Delta_{g - l} (m^\prime)} 
    \psi_{m^\prime, \begin{psmatrix}s_1^\prime \\
    \label{Rita}
    s_2^\prime\end{psmatrix}, 
    n^\prime - r^\prime {m^\prime}^{-1} {r^\prime}^\ast}
    (\tau_{11}, w_{11}, z_{11})
    \theta_{m^\prime, s_1^\prime}^{(g - l)}
    (\tau_{11}, z_{12} + {m^\prime}^{-1} {r^\prime}^\ast z_{11},
    w_{12} + w_{11}  r^\prime {m^\prime}^{-1})
\end{align}
\end{linenomath}
holds as an equation of formal Fourier series. 
In particular, every formal Fourier--Jacobi coefficient $\psi_{m^\prime, s^\prime, n}$ in \eqref{Sasha} is a holomorphic function in $\tau_1$. 
\end{lemma}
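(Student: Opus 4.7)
The plan is to substitute the cogenus-$1$ formal Fourier--Jacobi expansion \eqref{Sasha} of $h_{m^\prime, s^\prime}$ into the formal theta decomposition \eqref{Celine} of $\psi_{m^\prime}$, and then compare the resulting triple sum coefficient-by-coefficient with the formal Fourier--Jacobi expansion \eqref{John} of $\psi_{m^\prime}$. Writing $s^\prime = \begin{psmatrix} s_1^\prime \\ s_2^\prime \end{psmatrix}$ with $s_1^\prime \in \Delta_{g-l}(m^\prime)$ and $s_2^\prime \in \Delta_1(m^\prime)$, I would correspondingly split each summation index $r$ of $\theta_{m^\prime, s^\prime}^{(g-l^\prime)}$ as $r = \begin{psmatrix} r_a \\ r_b \end{psmatrix}$, where $r_a$ runs over $s_1^\prime + \mathrm{Mat}_{g-l,\, l^\prime}(\OK)\, m^\prime$ (the top $g-l$ rows) and $r_b$ runs over $s_2^\prime + \mathrm{Mat}_{1,\, l^\prime}(\OK)\, m^\prime$ (the last row).

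Next, I would expand the theta exponent $\Tr(r {m^\prime}^{-1} r^\ast \tau_1 + r z) + \Tr(r^\ast w)$ block-wise, using the refined decompositions $\tau_1 = \begin{psmatrix}\tau_{11} & w_{11} \\ z_{11} & \tau_{12}\end{psmatrix}$, $w = \begin{psmatrix}w_{12} \\ w_{22}\end{psmatrix}$, $z = \begin{psmatrix}z_{12} & z_{22}\end{psmatrix}$. The terms that depend only on $\tau_{12}, z_{22}, w_{22}$ reduce to $r_b {m^\prime}^{-1} r_b^\ast \tau_{12} + r_b z_{22} + \Tr(r_b^\ast w_{22})$, so matching with the exponentials in \eqref{John} forces $r_b = r^\prime$, and combined with the factor $e(n \tau_{12})$ carried over from \eqref{Sasha}, this pins down $n = n^\prime - r^\prime {m^\prime}^{-1} {r^\prime}^\ast$. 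The remaining pure-$r_a$ terms $\Tr(r_a {m^\prime}^{-1} r_a^\ast \tau_{11} + r_a z_{12}) + \Tr(r_a^\ast w_{12})$, together with the two mixed pieces $r_a {m^\prime}^{-1} {r^\prime}^\ast z_{11}$ and $r^\prime {m^\prime}^{-1} r_a^\ast w_{11}$, recombine by cyclic invariance of trace into the exponent of the partial theta series $\theta_{m^\prime, s_1^\prime}^{(g-l)}$ whose Jacobi arguments have been shifted to $z_{12} + {m^\prime}^{-1} {r^\prime}^\ast z_{11}$ and $w_{12} + w_{11} r^\prime {m^\prime}^{-1}$. Extracting the Fourier coefficient at $e(n^\prime \tau_{12} + r^\prime z_{22}) e({r^\prime}^\ast w_{22})$ on both sides then yields \eqref{Misha}.

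For the holomorphicity claim, the left-hand side of \eqref{Misha} is a Fourier--Jacobi coefficient of a Hermitian modular form, hence a genuine Hermitian Jacobi form and in particular a holomorphic function of $(\tau_{11}, w_1, z_1)$. The partial theta series $\theta_{m^\prime, s_1^\prime}^{(g-l)}$ appearing on the right-hand side are holomorphic, and their Fourier expansions in the Jacobi subvariables $(w_{12}, z_{12})$ are supported on disjoint shift classes indexed by $s_1^\prime \in \Delta_{g-l}(m^\prime)$; hence the classical uniqueness of theta decomposition allows one to recover each $\psi_{m^\prime, s^\prime, n}$ as a specific Fourier coefficient of the holomorphic left-hand side, proving that it is itself holomorphic in $(\tau_{11}, w_{11}, z_{11})$.

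I expect the main bookkeeping obstacle to be the cross-term accounting in the second paragraph: the two mixed pieces $r_a {m^\prime}^{-1} {r^\prime}^\ast z_{11}$ and $r^\prime {m^\prime}^{-1} r_a^\ast w_{11}$ must combine cleanly, under cyclic permutation of trace, with the original $r_a z_{12}$ and $r_a^\ast w_{12}$ contributions to produce exactly the announced Jacobi-variable shifts, with no stray scalar factors. This is essentially a trace manipulation, but one made delicate by the differing block sizes, in particular the $1 \times 1$ blocks arising from the assumption $l - l^\prime = 1$.
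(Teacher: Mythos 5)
Your proposal follows the paper's own (very terse) argument exactly: insert \eqref{Sasha} into \eqref{Celine}, split the shift $s^\prime$ and the theta summation index into blocks of sizes $g-l$ and $1$, compare the formal Fourier coefficient at $e(n^\prime \tau_{12} + r^\prime z_{22})\, e({r^\prime}^\ast w_{22})$ with \eqref{John}, and recombine the mixed terms by cyclicity of the trace into the shifted arguments of $\theta_{m^\prime, s_1^\prime}^{(g-l)}$ — this is precisely the bookkeeping the paper leaves implicit. One small rephrasing is needed in your holomorphicity step: the left-hand side of \eqref{Misha} is holomorphic not because $f$ is a Hermitian modular form (that is what the paper is ultimately trying to prove), but because the coefficients $\phi_m$ of a symmetric formal Fourier--Jacobi series are, by definition, genuine Hermitian Jacobi forms; with that correction your extraction of each $\psi_{m^\prime, s^\prime, n}$ via the disjoint shift classes in $(w_{12}, z_{12})$ is sound and matches the paper.
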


\begin{proposition}\label{boutique}
Let $g > l$ be positive integers and fix $l^\prime = l - 1$. Let $f \in \FMC$ be a symmetric formal Fourier--Jacobi series, and let $m^\prime \in \mathrm{Herm}_{l^\prime} (E)_{> 0}$ be a semi-integral positive definite Hermitian matrix. Then, the formal Fourier–Jacobi coefficient $\psi_{m^\prime}$ of $f$ has a formal theta expansion
\begin{linenomath}
\begin{gather*}
    \psi_{m^\prime} (\tau_1, w, z)
= \sum_{s^\prime \in \Delta_{g - l^\prime} (m^\prime)}
h_{m^\prime, s^\prime} (\tau_1)  \theta_{m^\prime, s^\prime}^{(g - l^\prime)} (\tau_1, w, z)
\tx{,}
\end{gather*}
\end{linenomath}
where the coefficients are vector-valued symmetric formal Fourier--Jacobi series, more precisely
\begin{linenomath}
 \begin{gather*}
     (h_{m^\prime, s^\prime})_{s^\prime} \in \mathrm{FM}_{k - l^\prime}^{(g - l^\prime)} (\rho_{m^\prime}^{(g - l^\prime)})
     \tx{.}
 \end{gather*}
\end{linenomath}
\end{proposition}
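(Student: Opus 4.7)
The plan is to observe that the formal theta expansion itself is already the content of equation~\eqref{Celine}, so the real work in Proposition~\ref{boutique} is verifying that the tuple $(h_{m^\prime, s^\prime})_{s^\prime}$ belongs to $\mathrm{FM}_{k - l^\prime}^{(g - l^\prime)}(\rho_{m^\prime}^{(g - l^\prime)})$. Unwinding the definition of symmetric formal Fourier--Jacobi series from Subsection~\ref{calmness}, this reduces to checking two independent properties: first, that the cogenus-$1$ formal Fourier--Jacobi coefficients of each $h_{m^\prime, s^\prime}$ (defined by the ordinary expansion~\eqref{Sasha}) are genuine Hermitian Jacobi forms of the correct weight, index, and type; and second, that the vector-valued formal Fourier coefficients $\bigl(c(h_{m^\prime, s^\prime}; n^\prime)\bigr)_{s^\prime}$ obey the $\GL{g - l^\prime}(\OK)$-symmetry~\eqref{zhou} twisted by $\rho_{m^\prime}^{(g - l^\prime)}$.

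The symmetry requirement is exactly the statement of Lemma~\ref{Nathan}, so that piece is already done. For the Jacobi form requirement, the key input is Lemma~\ref{Anastasia}: equation~\eqref{Misha} exhibits $\phi_{\begin{psmatrix} n^\prime & r^\prime \\ {r^\prime}^\ast & m^\prime\end{psmatrix}}$ as a partial theta decomposition in the cogenus-$l^\prime$ direction with coefficients given by $\psi_{m^\prime, (s_1^\prime, s_2^\prime), n^\prime - r^\prime {m^\prime}^{-1} {r^\prime}^\ast}$. Since $f \in \FMC$, the left-hand side is a genuine Hermitian Jacobi form of genus $g - l$, cogenus $l$, weight $k$, and trivial type. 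The classical theta decomposition~\eqref{Nadal}, together with the uniqueness of such a decomposition, then forces the coefficient tuple $(\psi_{m^\prime, (s_1^\prime, s_2^\prime), n})_{s_1^\prime \in \Delta_{g - l}(m^\prime)}$ to be a vector-valued Hermitian Jacobi form of genus $g - l$, cogenus $1$, index $n$, weight $k - l^\prime$ (the $-l^\prime$ shift accounting for the weight $l^\prime$ carried by $\theta_{m^\prime, s_1^\prime}^{(g - l)}$), and type $\rho_{m^\prime}^{(g - l)}$ dual to that of the theta tuple. Reassembling these across $s_2^\prime$ yields that $\psi_{m^\prime, s^\prime, n}$ is a Jacobi form of the type $\rho_{m^\prime}^{(g - l^\prime)}$ pulled back to the Jacobi subgroup, as desired.

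Combining the two pieces gives exactly the definition of an element of $\mathrm{FM}_{k - l^\prime}^{(g - l^\prime)}(\rho_{m^\prime}^{(g - l^\prime)})$, which finishes the proof. The main obstacle is the bookkeeping step of the third paragraph: one must check that the ``partial'' theta decomposition along the $l^\prime$-dimensional block behaves well, i.e.\ that uniqueness of the classical decomposition can be applied inside a Jacobi form of a larger cogenus to extract the vector-valued modularity of the coefficients, and that the weight, Weil-representation type, and restriction to the relevant Jacobi subgroup all match up correctly. Once one recognizes that fixing the auxiliary variables $(\tau_{11}, w_{11}, z_{11})$ reduces~\eqref{Misha} to a classical theta decomposition in the remaining Jacobi variables, the verification is then essentially a translation of the classical uniqueness statement through the identification inherent in Lemma~\ref{Anastasia}.
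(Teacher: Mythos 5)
Your proposal is correct and follows essentially the same route as the paper: the symmetry condition is delegated to Lemma~\ref{Nathan}, and the Jacobi-form property of the cogenus-$1$ coefficients is extracted from the partial theta decomposition of Lemma~\ref{Anastasia} by comparing the weight-$k$ invariance of $\phi_{\begin{psmatrix} n^\prime & r^\prime \\ {r^\prime}^\ast & m^\prime\end{psmatrix}}$ with the weight-$l^\prime$, dual-Weil-type transformation of the theta tuple and inverting the decomposition (the paper phrases this inversion as ``arguing as in Section 3 of Ziegler,'' which is the same uniqueness/linear-independence argument you invoke).
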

\begin{proof}
By Lemmas~\ref{Nathan} and \ref{Anastasia}, it suffices to show the modularity of the vector-valued function
\begin{linenomath}
 \begin{gather*}
     (\psi_{m^\prime, s^\prime, n})_{s^\prime \in \Delta_{g - l^\prime} (m^\prime)}
     \tx{.}
 \end{gather*}
\end{linenomath}
We analyze both sides of the equation in Lemma~\ref{Anastasia}. On the left hand side \eqref{Misha}, the function is invariant under the weight-$k$ action of the Hermitian Jacobi group $\Gamma^{(g - l, l)}$ and index $\begin{psmatrix}n^\prime & r^\prime \\
{r^\prime}^\ast & m^\prime \end{psmatrix}$, by the assumption $f \in \FMC$. On the right hand side \eqref{Rita}, the vector-valued theta function transforms by the restriction of $(\rho_{m^\prime}^{(g)})^\vee$ to $\Gamma^{(g - l, l^\prime)}$, of weight $l^\prime$. Arguing as in Section 3 of \cite{ziegler-1989}, we conclude that the vector-valued holomorphic function $(\psi_{m^\prime, s^\prime, n})_{s^\prime \in \Delta_{g - l^\prime} (m^\prime)}$ transforms by the restriction of $\rho_{m^\prime}^{(g)}$ to $\Gamma^{(g - l, 1)}$, of weight $k - l^\prime$, which completes the proof.
\end{proof}

Finally, we state the classical result mentioned from the beginning. Let $\phi \in \JA(\rho)$ be a Hermitian Jacobi form, then its Fourier coefficients $c (\phi; n, r)$ satisfy a symmetry condition similar to Equation~\eqref{Bob}, which then defines a formal Fourier series $h_s$ via
\begin{linenomath}
\begin{gather*}
    c (h_s; n) := c (\phi; n + r m^{-1} r^\ast, r)
\end{gather*}
\end{linenomath}
for each $s \in \Delta_{g} (m)$ and any choice of $r \in s + \mathrm{Mat}_{g, l} (\OK) m$. Then we have an equation of formal Fourier series \eqref{Nadal}, and the vector-valued holomorphic function $(h_s)_{s \in\Delta_{g} (m)}$ satisfying Equation~\eqref{Nadal} is unique. Similar to the formal theta decomposition, we follow the lines of \cite{ziegler-1989}, Section 3 and obtain the classical theta decomposition in the Hermitian case.

\begin{proposition}\label{greece}
Let $g$ and $l$ be positive integers. Let $m \in \mathrm{Herm}_{l} (E)$ be a semi-integral Hermitian matrix. Let $\rho_m^{(g)}$ be the Weil representation of $\UGZ$ on $\C [\Delta_{g} (m)]$, and let $\rho$ be a complex finite dimensional representation of $\UGZ$. Then, there is an isomorphism \begin{linenomath}
\begin{align*}
    \JA(\rho) &\cong \mathrm{M}_{k - l}^{(g)} (\rho_m^{(g)} \otimes \rho)\\
    \phi &\lmto (h_s)_{s \in \Delta_{g} (m)}
    \tx{,}
\end{align*}
sending a Hermitian Jacobi form $\phi$ to the components of its theta decomposition.
\end{linenomath}
\end{proposition}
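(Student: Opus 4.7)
The plan is to implement the classical theta decomposition in the Hermitian case by mirroring the formal argument of Proposition~\ref{boutique} and following the lines of Section~3 of \cite{ziegler-1989}. I would construct the isomorphism in both directions, with the main content being that the vector of theta components transforms with the Weil representation twisted by $\rho$.

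Starting with $\phi \in \JA(\rho)$, the invariance of $\phi$ under the Heisenberg elements $[(\lambda, 0), 0]$ with $\lambda \in \MglZ$ forces the symmetry $c(\phi; n + r m^{-1} r^\ast, r) = c(\phi; n + r' m^{-1} {r'}^\ast, r')$ on its Fourier coefficients whenever $r' \equiv r \pmod{\MglZ m}$, exactly paralleling \eqref{Bob}. Setting $c(h_s; n) := c(\phi; n + r m^{-1} r^\ast, r)$ for any representative $r \in s + \MglZ m$ yields well-defined Fourier series $h_s$, and substituting the definition \eqref{bright} of $\theta_{m,s}^{(g)}$ into the right-hand side of \eqref{Nadal} recovers the Fourier expansion of $\phi$, so \eqref{Nadal} holds as an equation of holomorphic functions on $\Hgl$ with each $h_s$ holomorphic on $\Hg$.

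The main step is to show that $(h_s)_{s \in \Delta_{g}(m)}$ transforms as a vector-valued Hermitian modular form of weight $k - l$ and type $\rho_m^{(g)} \otimes \rho$. Embedding $\UGZ \hookrightarrow \Ggl$ via $\gamma \mapsto (\gamma, [(0,0), 0])$, the Jacobi form $\phi$ transforms with weight $k$ and type $\rho$, while the theta vector $(\theta_{m,s}^{(g)})_s$ transforms with weight $l$ and type $(\rho_m^{(g)})^\vee$ as recalled at the start of Subsection~\ref{whispering}. The theta series $\theta_{m,s}^{(g)}$ are linearly independent as functions of $(w, z)$ since their Fourier supports in $r$ lie in disjoint cosets of $\MglZ m$, so the decomposition \eqref{Nadal} is unique; comparing both sides under the $\UGZ$-action therefore forces $(h_s)_s$ to transform with weight $k - l$ and type $\rho_m^{(g)} \otimes \rho$. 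When $g = 1$, the boundedness condition imposed on $\phi$ translates directly into holomorphy of $(h_s)_s$ at the cusps by the same substitution.

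Conversely, given $(h_s)_s \in \mathrm{M}_{k - l}^{(g)}(\rho_m^{(g)} \otimes \rho)$, the sum $\phi := \sum_{s \in \Delta_{g}(m)} h_s \theta_{m,s}^{(g)}$ transforms as a Hermitian Jacobi form in $\JA(\rho)$ under the full Jacobi group $\Ggl$ by combining the modular transformation laws of $(h_s)_s$ with those of the theta vector and the elementary transformation laws of $\theta_{m,s}^{(g)}$ under the Heisenberg subgroup, which provides the inverse map. The main obstacle is really one of careful bookkeeping---verifying that the pairing of $\rho_m^{(g)}$ with its dual across the full slash action of $\Ggl$ yields precisely $\rho$---but all structural ingredients have Hermitian counterparts in the excerpt, so Ziegler's Section~3 argument transfers without essential modification.
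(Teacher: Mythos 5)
Your argument is correct and follows essentially the same route as the paper, which defines the components $h_s$ through the coefficient symmetry coming from Heisenberg invariance, notes the uniqueness of the decomposition \eqref{Nadal}, and otherwise defers to Section~3 of Ziegler (and Shimura's Proposition~3.5) for the transformation behaviour of the theta vector; you simply spell out the bookkeeping (linear independence of the $\theta_{m,s}^{(g)}$, comparison of slash actions, cusp condition for $g=1$, and the inverse map) that the paper leaves to the citation. The only caveat, shared with the paper's own statement, is that the construction tacitly requires $m$ to be positive definite (so that $m^{-1}$ and the theta series make sense), as in the setup of Subsection~\ref{whispering}.
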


\section{Symmetric formal Fourier--Jacobi series are algebraic over Hermitian modular forms}\label{run}
The aim of this section is to show Theorem~\ref{lolita}. Throughout this section, let $E/\Q$ be a norm-Euclidean imaginary quadratic field,  over which the Hermitian modular forms are defined. In fact, except for Lemma~\ref{cell} and its corollaries, the whole section also works for $E/\Q$ being an arbitrary imaginary quadratic field. The proof of Theorem~\ref{lolita} is based on the following tension: on the one hand, the dimension formula in Proposition~\ref{dostoevsky} provides an asymptotic lower bound as in Corollary~\ref{cat} for the total dimension of Hermitian modular forms of bounded weight; on the other hand, the embedding constructed in Lemma~\ref{orange} gives rise to an asymptotic upper bound as in Corollary~\ref{motive} for the total dimension of symmetric formal Fourier--Jacobi series of bounded weight; but the orders of these two bounds in terms of the weight coincide, due to the nonvanishing of the lower slope bound for Hermitian modular forms, defined in Subsection~\ref{bible} and proven in Subsection~\ref{torch}.

\subsection{Vanishing orders and slope bounds}
\label{bible}
Let $t \in \HGK$, we say that $t$ represents a rational number $h$ if there is a nonzero integral column vector $\omega \in \cO_E^g \setminus \{0\}$ such that $\omega^\ast t \omega = h$. For a nonzero (vector valued) function $f$ with Fourier expansion as in \eqref{blackberry},  we define its vanishing order to be
\begin{linenomath}
\begin{gather*}
    \ord f:=
    \inf \big\{h \in \Q : h\, \text{can be represented by some}\, t \in \HGK\, \text{such that}\, c (f; t) \neq 0\big\}
    \tx{.}
\end{gather*}
\end{linenomath}
If $f = 0$, we set $\ord f := \infty$ by convention. 
\begin{lemma}\label{fig}
Let $f_1, f_2$ be two (vector-valued) functions.  Then their vanishing orders satisfy
\[\ord \big(f_1 \otimes f_2 \big) \geq \ord f_1 + \ord f_2
\tx{.}\]
\end{lemma}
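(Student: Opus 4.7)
The plan is to combine the convolution formula for Fourier coefficients of a tensor product with the positive semidefiniteness of the indexing matrices appearing in the Fourier expansion~\eqref{blackberry}.

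First I would expand
\[
c(f_1 \otimes f_2; t) = \sum_{\substack{t_1, t_2 \in \HGK_{\geq 0}\\ t_1 + t_2 = t}} c(f_1; t_1) \otimes c(f_2; t_2),
\]
which is immediate from bilinearity of $\otimes$ and the identity $e(t_1 \tau)\,e(t_2 \tau) = e((t_1 + t_2)\tau)$ applied to the Fourier expansions of $f_1$ and $f_2$. In particular, whenever $c(f_1 \otimes f_2; t) \neq 0$ there exist $t_1, t_2 \in \HGK_{\geq 0}$ with $t_1 + t_2 = t$ and $c(f_j; t_j) \neq 0$ for $j=1,2$.

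Now suppose $h \in \Q$ is represented by such a $t$, i.e.\ $h = \omega^\ast t \omega$ for some nonzero $\omega \in \cO_E^g$. I would set $h_j := \omega^\ast t_j \omega$ and observe that each $h_j$ lies in $\Q_{\geq 0}$: it is a $1 \times 1$ Hermitian matrix with entries in $E$, so it coincides with its complex conjugate and therefore sits in $E \cap \R = \Q$, while nonnegativity is forced by $t_j \in \HGK_{\geq 0}$. By linearity $h = h_1 + h_2$, and each $h_j$ is by construction represented by $t_j$, so the definition of vanishing order gives $h_j \geq \ord f_j$ and hence $h \geq \ord f_1 + \ord f_2$.

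Taking the infimum over all admissible triples $(t,\omega,h)$ yields the claimed inequality, and the case where one of the $f_j$ vanishes is covered by the convention $\ord 0 = \infty$. There is no serious obstacle here; the only point that deserves explicit verification is the rationality of $h_j$ discussed above, together with the bookkeeping that the convolution identity for Fourier coefficients remains valid for vector-valued functions once one interprets the product through $\otimes$.
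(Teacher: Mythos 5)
Your proposal is correct and follows essentially the same route as the paper: expand $c(f_1\otimes f_2;t)$ as the convolution sum over $t_1+t_2=t$ and extract a pair with both coefficients nonzero. The only difference is that you spell out the final bookkeeping (writing $h=\omega^\ast t\,\omega$ as $h_1+h_2$ with $h_j=\omega^\ast t_j\,\omega\in\Q_{\geq 0}$ represented by $t_j$), which the paper leaves implicit, and that verification is accurate.
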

\begin{proof}
The assertion is clear if $f_1 = 0$ or $f_2 = 0$. Assume now $f_1$ and $f_2$ are both nonzero, and let $t \in \HGK$ be a Hermitian matrix such that $c (f_1 \otimes f_2; t) \neq 0$. Since
\[c (f_1 \otimes f_2; t) = \sum_{\substack {0 \leq t_1, t_2 \in \HGK \tx{,} \\ t_1 + t_2 = t}} c (f_1 ; t_1) \otimes 
c(f_2; t_2)\tx{,}\]
there must be some pair $(t_1, t_2)$ such that $t_1 + t_2 = t$, $c (f_1; t_1) \neq 0$, and $c (f_2 ; t_2) \neq 0$, as desired.
\end{proof}

Similarly, we define the vanishing order of a nonzero Hermitian Jacobi form $\phi$ of genus $g$, weight $k$, index $m$ ($l \times l$), and type $\rho$  with Fourier expansion as in \eqref{lemon}, to be
\begin{linenomath}
\begin{align*}
    \ord\, \phi:=
    \inf \big\{h \in \Q : h\, \text{can be represented by some}&\, t \in \HGK\, \text{such that}\\
    & c (\phi; t, r) \neq 0 \,\, \tx{for some}\, r \in \Mgl  \big\}
    \tx{.}
\end{align*}
\end{linenomath}
If $\phi = 0$, we set $\ord\, \phi := \infty$ by convention.  Spaces of (vector-valued) Hermitian modular (resp. Hermitian Jacobi) forms  of genus $g$, weight $k$, (index $m$,) and type $\rho$ with vanishing order at least $o \in \Q$, are denoted by $\MR [o]$ (resp. $\JR [o]$).

Let $f$ be a classical (scalar-valued) Hermitian modular form of weight $k$. The slope of $f$ is defined as
\[\omega(f) := \frac{k}{\ord f}\tx{.}\]
The lower slope bound for classical Hermitian modular forms of degree $g$ is defined as
\[\omega_g := \inf_{\substack{k\in \Z\tx{,} \\ f \in \MA \setminus \{0\}}}
\omega (f)\tx{.}\]

\subsection{Embedding and vanishing}
\label{torch}

\begin{lemma}\label{lime}
Let $g \geq 2$ be an integer, and let $d, l$ be integers such that $1 \leq l \leq g - 1$. For every complex finite dimensional representation $\rho$ of $\UGZ$, which via restriction also denotes a type for the Jacobi group $\Gamma^{(g - l, l)}$, the linear map 
\begin{align*}
    \MA (\rho) [d] &\lra \FMC (\rho) [d] \\
    f & \lmto \sum_{0 \leq m \in \HLK}
    \phi_m (\tau_1, w, z) e (m \tau_2)
\end{align*}
given by the cogenus-$l$ Fourier--Jacobi expansion, is an embedding.
\end{lemma}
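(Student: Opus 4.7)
The map amounts to rearranging the Fourier expansion of $f$ into a Fourier--Jacobi expansion, so the proof breaks into three checks: well-definedness, preservation of vanishing order, and injectivity.

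For well-definedness, given $f \in \MA(\rho)$, I would first observe that each Fourier--Jacobi coefficient $\phi_m \colon \mathbb{H}_{g-l,l} \to V(\rho)$ in \eqref{apricot} is a Hermitian Jacobi form in $\mathrm{J}_{k,m}^{(g-l)}(\rho)$: this follows by restricting the modularity of $f$ along the embedding \eqref{kiwi} of the Jacobi group $\Gamma^{(g-l,l)}$ into $\UGZ$, with the boundedness at the cusp in the edge case $g - l = 1$ supplied by the Koecher principle for $f$. The more substantive step is the symmetry condition \eqref{zhou} on the formal Fourier coefficients. I would obtain this by specializing the modularity of $f$ to the subgroup $\rot(\GL{g}(\OK)) \subseteq \UGZ$: since $\rot(u)\tau = u\tau u^\ast$ and $j(\rot(u), \tau) = \det(u^\ast)^{-1}$, the transformation law $f(\rot(u)\tau) = j(\rot(u), \tau)^k \rho(\rot(u)) f(\tau)$ together with the reindexing $t \mapsto u^\ast t u$ in the Fourier expansion \eqref{blackberry} (using $\Tr(t \cdot u \tau u^\ast) = \Tr(u^\ast t u \cdot \tau)$) translates directly into \eqref{zhou} for $c(f;t)$, which are by \eqref{han} the formal Fourier coefficients of the image series.

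Preservation of vanishing orders is immediate from \eqref{han}, which identifies the Fourier coefficients of the Jacobi forms $\phi_m$ with those of $f$ via the block decomposition $t = \begin{psmatrix} n & r \\ r^\ast & m \end{psmatrix}$. The set of rationals represented by some nonvanishing $t$ is therefore identical on both sides, so $f \in \MA(\rho)[d]$ maps into $\FMC(\rho)[d]$. Injectivity is equally immediate: if the image vanishes as a formal series, then every $c(\phi_m; n, r) = 0$ and hence every $c(f; t) = 0$ by \eqref{han}, which forces $f = 0$ since $f$ is holomorphic on the connected domain $\Hg$ and coincides with its absolutely convergent Fourier expansion.

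I do not expect any serious obstacle; the lemma is essentially a bookkeeping check that the Fourier--Jacobi rearrangement embeds modular forms into symmetric formal series while respecting the filtration by vanishing order. The only step with any real content is the verification of the $\GL{g}(\OK)$-symmetry, which reduces to an elementary change of variable. The actual work will come later, when this embedding is fed into the dimension-counting argument combining Proposition~\ref{dostoevsky} and Corollary~\ref{motive} to deduce algebraicity of symmetric formal Fourier--Jacobi series over the graded algebra of Hermitian modular forms.
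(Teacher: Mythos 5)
Your proposal is correct and matches the paper's (implicit) argument: the paper dismisses Lemma~\ref{lime} as immediate from the definition of the Fourier--Jacobi expansion, and your write-up simply spells out that routine verification --- the Jacobi-form property of the $\phi_m$ via the embedding \eqref{kiwi}, the $\GL{g}(\OK)$-symmetry \eqref{zhou} from the transformation under $\rot(u)$, preservation of vanishing order via \eqref{han}, and injectivity from the Fourier expansion. Nothing in your added detail deviates from or goes beyond what the paper takes for granted.
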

\begin{proof}
This is clear from the definition of Fourier--Jacobi expansion.
\end{proof}

\begin{lemma}\label{orange}
Let $g \geq 2$ be an integer. For every non-negative integer $d$ and every complex finite dimensional representation $\rho$ of $\UGZ$, we have
\[\dim \FMA (\rho) [d] \leq \sum_{m = d}^{\infty} \dim \JB (\rho) [m]\tx{.}\]
\end{lemma}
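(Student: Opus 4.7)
The plan is to filter $\FMA(\rho)[d]$ by the support of its cogenus-$1$ Fourier--Jacobi expansion and embed each graded piece into a space of Jacobi forms with appropriately large vanishing order.

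First I would define the decreasing filtration $V_m := \{f \in \FMA(\rho)[d] : \phi_{m'} = 0 \text{ for all integers } m' < m\}$ for $m \geq d$, and observe that $V_d = \FMA(\rho)[d]$. Indeed, if $c(\phi_{m'}; n, r) \neq 0$, then the matrix $T = \begin{psmatrix} n & r \\ r^\ast & m' \end{psmatrix}$ represents $m'$ via $\omega = e_g$, so $\ord f \geq d$ forces $m' \geq d$.

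The central step is, for each $m \geq d$, to show that the $m$-th Fourier--Jacobi coefficient map $V_m \to \JB(\rho)$, $f \mapsto \phi_m$, has kernel $V_{m+1}$ and image contained in $\JB(\rho)[m]$. The kernel claim is tautological. For the image, given $c(\phi_m; n, r) \neq 0$ and a primitive vector $\omega_1 \in \OK^{g-1}$, I would set $\omega = \begin{psmatrix} \omega_1 \\ 0 \end{psmatrix}$, which is primitive in $\OK^g$. Since $\OK$ is a Dedekind domain and $g \geq 2$, any primitive vector in $\OK^g$ completes to a basis, so $\omega = u e_g$ for some $u \in \GL{g}(\OK)$. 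The symmetry condition~\eqref{zhou} together with the invertibility of $\rho\bigl(\rot(u)\bigr)$ propagates $c(f; T) \neq 0$ to $c(f; u^\ast T u) \neq 0$ for $T = \begin{psmatrix} n & r \\ r^\ast & m \end{psmatrix}$; since $(u^\ast T u)_{gg} = \omega^\ast T \omega = \omega_1^\ast n \omega_1$, this is a Fourier coefficient of $\phi_{\omega_1^\ast n \omega_1}$. The assumption $f \in V_m$ then forces $\omega_1^\ast n \omega_1 \geq m$. For non-primitive $\omega_1 = c \omega_1'$ with $c \in \OK \setminus \{0\}$ and $\omega_1'$ primitive, the inequality $|c|^2 \geq 1$ gives $\omega_1^\ast n \omega_1 = |c|^2 {\omega_1'}^\ast n \omega_1' \geq m$, so $\ord \phi_m \geq m$ and $\phi_m \in \JB(\rho)[m]$.

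Finally, a formal symmetric Fourier--Jacobi series is determined by its Fourier--Jacobi coefficients, so $\bigcap_m V_m = 0$, and hence
\[
\dim \FMA(\rho)[d] = \dim V_d = \sum_{m \geq d} \dim V_m/V_{m+1} \leq \sum_{m \geq d} \dim \JB(\rho)[m].
\]
The main technical step is the swap: completing a primitive vector in $\OK^g$ to a basis over a general imaginary quadratic ring of integers, which holds by Bass's theorem (or a direct B\'ezout argument in stable rank $\leq 2$), and propagating non-vanishing through the $\rho$-twist in~\eqref{zhou}, which is routine since $\rho$ is finite-dimensional.
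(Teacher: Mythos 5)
Your proof is correct (under the paper's convention that the cogenus\nobreakdash-$1$ Fourier--Jacobi indices are integers, which is forced by the requirement $\phi_m \in \JB(\rho)$), and it mirrors rather than reproduces the paper's argument. The paper filters by vanishing order, i.e.\ by the spaces $\FMA(\rho)[m]$, embeds $\FMA(\rho)[d]$ into the product of graded quotients via recursively chosen non-canonical linear sections, and maps each quotient to $\JB(\rho)[m]$ by the $m$-th Fourier--Jacobi coefficient; in that setup the containment of the image in $\JB(\rho)[m]$ is immediate (test the order against vectors whose last coordinate vanishes), and all the substance sits in the assertion that the kernel is $\FMA(\rho)[m+1]$ --- an assertion the paper states without proof, and which requires exactly your argument: reduce to a unimodular vector, complete it to some $u \in \GL{g}(\OK)$, and propagate nonvanishing of Fourier coefficients through the symmetry condition~\eqref{zhou}. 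You filter instead by the lowest nonvanishing Fourier--Jacobi index, which makes the kernel identification tautological and relocates the same unimodular-completion/symmetry argument into the proof that the image lies in $\JB(\rho)[m]$; so the two proofs have the same essential content with the work placed in mirror-image positions, and yours has the merit of writing out the step the paper only asserts. Two minor points: the equality $\dim V_d = \sum_{m \geq d} \dim V_m/V_{m+1}$ deserves a word --- for the inequality you need, note that if the right-hand side is finite the filtration stabilizes, and $\bigcap_m V_m = 0$ then forces $V_M = 0$ for large $M$; this is precisely what the paper's non-canonical sections accomplish. Also, your reduction of a general nonzero $\omega_1$ to a scalar multiple of a primitive vector uses that $\OK$ is a PID (the content ideal must be principal), which is harmless in the norm-Euclidean setting of the paper but does not follow from $\OK$ being merely Dedekind, as your closing remark suggests.
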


\begin{proof}
For any $f  \in \FMA (\rho) [d]$, by picking a non-canonical linear section
\[l_m: \FMA (\rho) [m] / \FMA (\rho) [m + 1] \lra  \FMA (\rho) [m]\]
for each $m \geq d$, we define recursively $f_m \in \FMA (\rho) [m]$ via $f_d := f$ and
\begin{linenomath}
 \begin{gather*}
     f_m := f_{m - 1} - l_{m - 1} ([f_{m - 1}])
     \tx{.}
 \end{gather*}
\end{linenomath}
Then we define a linear map
\begin{align*}
    i: \FMA (\rho) [d] &\lra \prod_{d \leq m \in \Z} 
    \FMA (\rho) [m] / \FMA (\rho) [m + 1]\\
    f &\lmto \big([f_m]\big)_{m \geq d}
    \,
    \tx{.}
\end{align*}
Since $[f_m] = [0]$ implies $l_m ([f_m]) = 0$ for each $m$, we have $f = f_n \in  \FMA (\rho) [n]$ for any $n \geq d$, hence $f = 0$ and $i$ is an embedding. Moreover, by sending $f \in \FMA (\rho) [m]$ to its $m$-th Fourier--Jacobi coefficient, we obtain a linear map
\begin{gather*}
    \FMA (\rho) [m] \lra \JB (\rho) [m]
\end{gather*}
whose kernel is $\FMA (\rho) [m + 1]$,  hence it induces an embedding \begin{gather*}
    j: \prod_{d \leq m \in \Z} 
    \FMA (\rho) [m] / \FMA (\rho) [m + 1] 
    \lhra
    \prod_{d \leq m \in \Z}
    \JB (\rho) [m]
    \tx{.}
\end{gather*}
The desired inequality follows by composing $i$, $j$ and the dimension function.
\end{proof}

\begin{proposition}\label{height}
Let $g$ be a positive integer, and let $\rho$ be a complex finite dimensional representation of $\UGZ$ that factors through a finite quotient. For every integer $o > \frac{k}{\omega_g}$, we have
\begin{linenomath}
\begin{gather*}
    \MR [o] = \{0\} \tx{.}
\end{gather*}
\end{linenomath}
\end{proposition}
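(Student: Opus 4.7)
The plan is to prove the contrapositive: every nonzero $f \in \MR$ must satisfy $\ord f \leq k/\omega_g$. The key idea is to reduce to the scalar case via invariant theory, which is possible precisely because $\rho$ factors through a finite quotient $G := \UGZ/\ker \rho$ of the Hermitian modular group.

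Starting with a nonzero $f$, I would pick any $\tau_0 \in \Hg$ with $f(\tau_0) \neq 0$. Since $G$ is finite and we work in characteristic zero, the graded ring $\C[V(\rho)]^G$ of $G$-invariant polynomial functions on $V(\rho)$ separates $G$-orbits (by the standard Reynolds-operator averaging argument), so there is a homogeneous $G$-invariant polynomial $P$ of some degree $N \geq 1$ with $P\bigl(f(\tau_0)\bigr) \neq 0$. Set $F := P \circ f$. This is a nonzero holomorphic function on $\Hg$, and using the modularity of $f$ together with the homogeneity and $G$-invariance of $P$, one gets
\[
F(\gamma \tau) \;=\; P\bigl(j(\gamma,\tau)^k \rho(\gamma) f(\tau)\bigr) \;=\; j(\gamma, \tau)^{kN} P\bigl(f(\tau)\bigr) \;=\; j(\gamma, \tau)^{kN} F(\tau)
\]
for every $\gamma \in \UGZ$; hence $F$ is a nonzero scalar Hermitian modular form of weight $kN$ and trivial type for the full Hermitian modular group.

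To finish, I would expand $P$ as a sum of monomials of degree $N$ in the components $f_i$ of $f$ with respect to a fixed basis of $V(\rho)$. Since $\ord f_i \geq \ord f$ holds for every component by the very definition of $\ord$, iterating Lemma~\ref{fig} yields $\ord F \geq N \cdot \ord f$. Consequently $\omega(F) = kN/\ord F \leq k/\ord f$; if $\ord f > k/\omega_g$, this forces $\omega(F) < \omega_g$, contradicting the definition of $\omega_g$ as the infimum of slopes of nonzero scalar Hermitian modular forms. The main thing to verify carefully is the existence of the $G$-invariant polynomial $P$ with $P \circ f \not\equiv 0$, and this is precisely where the hypothesis that $\rho$ factors through a finite quotient plays its essential role; everything else is bookkeeping with Lemma~\ref{fig} and the definition of $\omega_g$.
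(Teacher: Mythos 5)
Your argument is correct and is essentially the paper's own: the paper also reduces to the scalar case by composing $f$ with a $\ker(\rho)$-finite-index invariant homogeneous polynomial — concretely the norm form, applied as $f_v=\prod_{\ga\in\ker(\rho)\backslash\UGZ} v\circ f|_k\ga$ of degree $[\UGZ:\ker(\rho)]$ — and then uses Lemma~\ref{fig} together with the scalar slope bound exactly as you do. The only difference is that you invoke abstract invariant theory for a homogeneous $G$-invariant $P$ with $P(f(\tau_0))\neq 0$ and argue by contradiction, whereas the paper takes the explicit norm form and concludes directly that its vanishing forces some factor $v\circ f|_k\ga$, hence $v\circ f$, to vanish.
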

\begin{proof}
If $\rho$ is the trivial representation of dimension $1$, the assertion is clear from the definition of $\omega_g$. In the general case, for any $f \in \MR [o]$ and $v \in V(\rho)^\vee$, we consider 
\begin{gather}\label{clementine}
    f_v := \prod_{\ga \in \ker (\rho) \backslash \UGZ} v \circ f|_k \ga\tx{.}
\end{gather}
It is clear that $f_v \in M_{|\rho| k}^{(g)}$, where $|\rho|$ is the index of $\ker (\rho)$ in  $\UGZ$. Note that each factor $v \circ f|_k \ga = v \circ \rho (\ga) f$ in \eqref{clementine} is a linear combination of components of $f$ as a vector-valued function, and $\ord f \geq o$, therefore $f_v \in M_{|\rho| k}^{(g)} \big[|\rho| o\big]$ by Lemma~\ref{fig}. But $|\rho| o > \frac{|\rho| k}{\omega_g}$ by the assumption $o > \frac{k}{\omega_g}$, as shown in the trivial case $M_{|\rho| k}^{(g)} \big[|\rho| o\big]$ vanishes, so $v \circ f|_k \ga = 0$ for some $\ga$. Taking the slash action of $\ga^{-1}$, we find $v \circ f = 0$, hence $f = 0$ as $v$ is chosen arbitrarily.
\end{proof}

Recall that there are exactly $5$ norm-Euclidean imaginary quadratic fields $E = \Q(\sqrt {d})$, which are given by $d \in \big\{-1, -2, -3, -7, -11\big\}$.  For each of these fields $E$, there is a maximal uniform constant $c_E > 0$, such that for any $\beta \in E$ there is an integer $\alpha \in \cO_E$ satisfying $\big(N_{E/\Q}(\beta - \alpha)\big)^2 \leq 1 - c_E$. In particular, we have the following corollary.

\begin{lemma}\label{cell} 
Let $s \in \Delta_g (m) = {\OK^\#}^g / m  \OK^g$. Then, there is a column vector $r \in s + m \OK^g$ such that its components $r_i$ satisfy $|r_i|^2 \leq (1 - c_E) m^2$.
\end{lemma}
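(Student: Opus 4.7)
The plan is to apply the norm-Euclidean property componentwise. First I would choose an arbitrary representative $\tilde{s} = (\tilde{s}_1, \ldots, \tilde{s}_g)^\T \in (\OK^\#)^g$ of the coset $s \in \Delta_g(m) = (\OK^\#)^g / m\OK^g$, so that replacing each $\tilde{s}_i$ by $\tilde{s}_i - m\alpha_i$ for some $\alpha_i \in \OK$ will still yield a representative of the same class $s$.

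Next, for each index $i$, set $\beta_i := \tilde{s}_i/m \in E$. Recalling that for an imaginary quadratic field $E \subset \C$ the norm is $N_{E/\Q}(x) = x\bar{x} = |x|^2$, the defining property of the norm-Euclidean constant $c_E > 0$ recalled just before the lemma supplies an element $\alpha_i \in \OK$ with $|\beta_i - \alpha_i|^2 \leq 1 - c_E$. Setting
\begin{gather*}
r_i := \tilde{s}_i - m \alpha_i \in \tilde{s}_i + m\OK,
\end{gather*}
one obtains $|r_i|^2 = m^2 |\beta_i - \alpha_i|^2 \leq (1 - c_E)\, m^2$.

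Finally, assemble $r := (r_1, \ldots, r_g)^\T$. Since each $r_i \in \tilde{s}_i + m\OK$, the column vector $r$ lies in $\tilde{s} + m\OK^g$, hence represents the same coset $s$ in $\Delta_g(m)$, and the componentwise bound $|r_i|^2 \leq (1 - c_E)\, m^2$ is exactly what was required.

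There is essentially no obstacle here; the only subtlety worth flagging is that this argument uses the norm-Euclidean property independently on each coordinate, which is precisely what fails in the remark about Wang's work on $\Q(\sqrt{-5})$, where no uniform constant $c_E$ of this type exists. This is why the hypothesis $d \in \{-1,-2,-3,-7,-11\}$ is essential at this point, even though the preceding parts of Section~\ref{run} go through for any imaginary quadratic field.
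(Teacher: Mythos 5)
Your argument is correct and is precisely the paper's (implicit) justification: Lemma~\ref{cell} is stated as an immediate consequence of the norm-Euclidean property of $E$, applied componentwise to the quotients $\tilde s_i/m$ and rescaled by $m$, which is exactly what you do. The only point worth noting is that you read the defining inequality for $c_E$ as $N_{E/\Q}(\beta-\alpha)=|\beta-\alpha|^2\leq 1-c_E$ (without the extra square appearing in the paper's displayed formula), and this is indeed the reading consistent with the bound $|r_i|^2\leq(1-c_E)\,m^2$ claimed in the lemma and used later in Proposition~\ref{apple}.
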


Let $g$ be a positive integer and $r \in {\OK^\#}^g$ be a column vector of dimension $g$. In analogy to the ordinary vanishing order for Jacobi forms, we define the $r$-th vanishing order $\ord_r \phi$ to be the smallest $m \in \Q$ such that there is some Hermitian positive semidefinite matrix $t \in \mathrm{Herm}_{g} (E)_{\geq 0}$, satisfying $c (\phi; t, r) \neq 0$,  and the $(g, g)$-th entry $t_{g, g} = m$. It is clear that $\ord\, \phi \leq \ord_r \phi$ for every $r \in {\OK^\#}^g$, and the equation $\ord_r\, (f \psi) = \ord f + \ord_r \psi$ holds for every Hermitian modular form $f$ and Hermitian Jacobi form $\psi$ of the same degree.

\begin{proposition}\label{apple}
Let $g$ be a positive integer, and let $\rho$ be a complex finite dimensional representation of $\UGZ$ that factors through a finite quotient. For every integer $m > c_E^{-1} \frac{k}{\omega_g}$, we have
\[\JR [m] = 0\tx{.}\]
\end{proposition}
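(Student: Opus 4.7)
The plan is to transfer the vanishing statement about Jacobi forms into one about (vector-valued) Hermitian modular forms via the theta decomposition of Proposition~\ref{greece}, and then invoke Proposition~\ref{height}. More precisely, let $\phi \in \JR [m]$ and write (with $l = 1$)
\[
\phi(\tau, w, z) = \sum_{s \in \Delta_g(m)} h_s(\tau)\, \theta_{m,s}^{(g)}(\tau, w, z),
\qquad (h_s)_s \in \mathrm{M}_{k-l}^{(g)} \big( \rho_m^{(g)} \otimes \rho \big).
\]
I will show that $\ord h_s \geq c_E m$ for every $s$. Given this bound, the hypothesis $m > c_E^{-1} k / \omega_g$ yields $c_E m > k / \omega_g > (k - l) / \omega_g$, so Proposition~\ref{height} (applied to the finite-quotient type $\rho_m^{(g)} \otimes \rho$) forces $(h_s)_s = 0$, whence $\phi = 0$.

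To produce the bound $\ord h_s \geq c_E m$, fix $t' \in \HGK_{\geq 0}$ with $c(h_s; t') \neq 0$ and a nonzero $\omega \in \cO_E^g$; I need $\omega^\ast t' \omega \geq c_E m$. Since $\cO_E$ is a PID (norm-Euclidean implies class number one), decompose $\omega = e\, \omega'$ with $e \in \cO_E \setminus \{0\}$ and $\omega' \in \cO_E^g$ primitive. Because $\omega^\ast t' \omega = |e|^2\, {\omega'}^\ast t' \omega' \geq {\omega'}^\ast t' \omega'$, I may assume $\omega = \omega'$ is primitive, and complete it to a basis via some $v \in \GL{g}(\cO_E)$ with $v \omega = e_g$. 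Setting $\tilde s = v^{-\ast} s$ and $\tilde t = v^{-\ast} t' v^{-1}$, the $\GL{g}(\cO_E)$-symmetry \eqref{zhou} of $\phi$ transports through the identification of theta components to give $c(h_{\tilde s}; \tilde t) \neq 0$, and a direct computation shows $\omega^\ast t' \omega = \tilde t_{g,g}$. Thus it suffices to establish $\tilde t_{g,g} \geq c_E m$.

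This final step is the only place where the norm-Euclidean hypothesis is used. By Lemma~\ref{cell} applied to the class $\tilde s$, choose a representative $\tilde r \in \tilde s + m \cO_E^g$ with $|\tilde r_g|^2 \leq (1 - c_E)\, m^2$. Unwinding the theta decomposition, the nonvanishing $c(h_{\tilde s}; \tilde t) \neq 0$ translates to $c\big(\phi;\ \tilde t + \tilde r m^{-1} \tilde r^\ast,\ \tilde r\big) \neq 0$, and the hypothesis $\ord \phi \geq m$ applied to the integral vector $e_g$ yields
\[
\tilde t_{g,g} + \frac{|\tilde r_g|^2}{m} \;=\; e_g^\ast \big( \tilde t + \tilde r m^{-1} \tilde r^\ast \big) e_g \;\geq\; m,
\]
hence $\tilde t_{g,g} \geq m - (1 - c_E) m = c_E m$. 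The main obstacle is the bookkeeping for the $\GL{g}(\cO_E)$-equivariance of the theta decomposition (i.e., verifying that the $\rot$-twist of $s$ matches what comes out of \eqref{zhou} on the $h_s$); once that is handled, the argument is a direct pairing of Lemma~\ref{cell} with the vanishing from Proposition~\ref{height}.
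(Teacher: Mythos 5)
Your proposal is correct and follows essentially the same route as the paper: theta-decompose $\phi$ via Proposition~\ref{greece}, use Lemma~\ref{cell} to pick a representative with small last entry and deduce $\ord h_s \geq c_E m$, then kill $(h_s)_s \in \mathrm{M}_{k-1}^{(g)}(\rho_m^{(g)} \otimes \rho)$ by Proposition~\ref{height}. The only difference is bookkeeping: where the paper packages the middle estimate through the $r$-th vanishing order $\ord_r$ and the identity $\ord_r(h_s\theta_{m,s}^{(g)}) = \ord(h_s) + \ord_r(\theta_{m,s}^{(g)})$, you unwind it by hand, reducing an arbitrary representing vector to $e_g$ via primitivity and $\GL{g}(\OK)$-equivariance, which yields the same inequality $\tilde t_{g,g} + |\tilde r_g|^2/m \geq m$.
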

\begin{proof}
Let $\phi \in \JR [m]$ be a Jacobi form of cogenus-$1$. By Proposition~\ref{greece}, this Jacobi form $\phi$ has a unique theta decomposition 
\begin{linenomath}
\begin{gather}\label{car}
    \phi (\tau, w, z) = 
    \sum_{s \in \Delta_g (m) = {\OK^\#}^g / m  \OK^g}
    h_s (\tau)  \theta_{m, s}^{(g)} (\tau, w, z)
\tx{.}
\end{gather}
\end{linenomath}
For every $s \in \Delta_g (m)$ and every $r \in  s + m  \OK^g$, by Equation~\eqref{car} and the definition of the $r$-th vanishing order of Jacobi forms we have
\begin{linenomath}
\begin{gather}\label{sky}
    m \leq  \ord\, \phi \leq \ord_r \phi 
    = \ord_r    \big(h_s \theta_{m, s}^{(g)}\big)
    = \ord (h_s) + \ord_r \big(\theta_{m, s}^{(g)}\big)
    \tx{.}
\end{gather}
\end{linenomath}
On the other hand, by Lemma~\ref{cell}, for each $s \in \Delta_g (m)$, there is some column vector $r(s) \in  s + m  \OK^g$ whose entries $r_i$ satisfy $|r_i|^2 \leq (1 - c_E) m^2$ for each $i$. It then follows from the definition that $\ord_r (\theta_{m, s}^{(g)}) \leq (1 - c_E) m$ for $r = r(s)$, and hence $\ord (h_s) \geq c_E m$ by Inequality~\eqref{sky}. Since this holds for every $s$, by the assumption $m > c_E^{-1} \frac{k}{\omega_g}$ we have
\[h = (h_s)_{s\in \Delta_g (m)} 
\in \mathrm{M}_{k - 1}^{(g)} (\rho_m^{(g)} \otimes \rho) [o]\]
for some $o > \frac{k}{\omega_g} > \frac{k - 1}{\omega_g}$, which implies $h = 0$  by Proposition~\ref{height} and hence $\phi = 0$, as desired.
\end{proof}

\begin{corollary}\label{field}
For every positive integer $g$, the slope bound $\omega_g \geq 12 c_E^{g - 1} > 0$.
\end{corollary}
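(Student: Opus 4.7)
The plan is to proceed by induction on $g$. For the base case $g = 1$, observe that the subgroup $\SL{2}(\Z) \subseteq \rmU(1, 1)(\Z)$ consists precisely of those integral matrices whose entries already lie in $\Z \subseteq \OK$; hence every Hermitian modular form of degree $1$ and weight $k$ restricts to a classical elliptic modular form of weight $k$ for $\SL{2}(\Z)$, and the valence formula for $\SL{2}(\Z)$ delivers the bound $\omega_1 \geq 12 = 12 c_E^0$.

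For the inductive step, I assume $\omega_{g-1} \geq 12 c_E^{g-2}$, which in particular guarantees $\omega_{g-1} > 0$. The goal is to show that $\MA[d] = 0$ whenever the integer $d$ exceeds $c_E^{-1} k/\omega_{g-1}$. First, Lemma~\ref{lime} (applied with $l = 1$ and trivial type) produces an embedding $\MA[d] \lhra \FMA[d]$, and Lemma~\ref{orange} furnishes the estimate
\[
    \dim \FMA[d] \leq \sum_{m \geq d} \dim \JB[m]\tx{.}
\]
The induction hypothesis licenses the use of Proposition~\ref{apple} in genus $g - 1$ with trivial type: each summand $\dim \JB[m]$ vanishes as soon as $m > c_E^{-1} k/\omega_{g-1}$. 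For $d$ exceeding this threshold every term on the right-hand side vanishes, so $\MA[d] = 0$, as desired.

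Consequently, every nonzero $f \in \MA$ satisfies $\ord f \leq c_E^{-1} k/\omega_{g-1}$. Invoking the induction hypothesis a second time, $c_E^{-1}/\omega_{g-1} \leq 1/(12 c_E^{g-1})$, so $\ord f \leq k/(12 c_E^{g-1})$ and $\omega(f) = k/\ord f \geq 12 c_E^{g-1}$. Taking the infimum over $k$ and over nonzero $f$ completes the induction with $\omega_g \geq 12 c_E^{g-1}$, and positivity of the bound is automatic because $c_E > 0$ for norm-Euclidean imaginary quadratic fields.

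I do not anticipate any serious obstacle: the corollary is essentially a clean assembly of Lemmas~\ref{lime} and~\ref{orange} with Proposition~\ref{apple}, held together by induction on the genus. All substantive content has already been extracted, and the norm-Euclidean hypothesis on $E$ intervenes only indirectly, via Lemma~\ref{cell} feeding into Proposition~\ref{apple}.
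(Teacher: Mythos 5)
Your proposal is correct and follows essentially the same route as the paper: base case $g=1$ via the inclusion into elliptic modular forms for $\SL{2}(\Z)$ and the slope bound $12$, then the inductive step by chaining Lemma~\ref{lime}, Lemma~\ref{orange}, and Proposition~\ref{apple} in genus $g-1$ to force $\MA[d]=0$ for $d > c_E^{-1}k/\omega_{g-1}$, which is exactly the paper's argument for $\omega_g \geq c_E\,\omega_{g-1}$. The only difference is cosmetic: you unwind the recursion to state the bound $12\,c_E^{g-1}$ directly rather than recording the intermediate inequality $\omega_g \geq c_E\,\omega_{g-1}$.
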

\begin{proof}
We prove by induction.
For $g = 1$ the upper half space $\bH_1$ is just the Poincar\'e upper half plane, and the modular group $\rmU(1, 1)(\Z)$ can be identified with $(\rmU(1) \cap \OK) \times \SL{2}(\Z)$. Hence $\rmM_k^{(1)}$ is just a subspace of the elliptic modular forms for $\SL{2} (\Z)$ of weight $k$, and it follows from \cite{bruinier-van-der-geer-harder-zagier-2008}, Page 9, Proposition 2, that $\omega_1 \geq 12$.

Now assume the assertion holds for $g = n - 1$ with $n \in \Z_{\geq 2}$, and we have to show $\omega_n \geq c_E \omega_{n - 1}$. Combining Lemma~\ref{lime}, Lemma~\ref{orange}, the induction hypothesis, and Proposition~\ref{apple}, we see that the inequalities
\[\dim \rmM_k^{(n)} [d] 
\leq \dim \mathrm{FM}_k^{(n)} [d] 
\leq \sum_{m = d}^{\infty}
\dim \rmJ_{k, m}^{(n - 1)}[m] 
= \sum_{m = d}^{\lfloor c_E^{-1} \frac{k}{\omega_{n - 1}} \rfloor} 
\dim \rmJ_{k, m}^{(n - 1)}[m]\]
hold for every positive integer $d$ and every weight $k$.
In particular, if $d > c_E^{-1} \frac{k}{\omega_{n - 1}}$, then the space $\MA [d]$ vanishes, which implies $\omega_n \geq c_E \omega_{n - 1}$. 
\end{proof}

\subsection{Asymptotic dimensions and algebraicity of $\FMG$ over $\MG$}\label{asymptotics}
\begin{lemma}\label{grass}
Let $\rho$ be a complex finite dimensional representation of $\rmU (1, 1) (\Z)$ that factors through a finite quotient, then
\[\dim \mathrm{M}_k^{(1)} (\rho) \ll \dim V(\rho) k \tx{.}\]
\end{lemma}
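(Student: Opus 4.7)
My plan is to reduce the bound to the classical dimension estimate for scalar elliptic modular forms on a finite-index subgroup of $\SL_2(\Z)$.

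First I would set $\Gamma_\rho := \ker \rho$, which is of finite index in $\rmU(1,1)(\Z)$ because $\rho$ factors through a finite quotient. Fixing any basis of $V(\rho)$, the coordinate functions of $f \in \rmM_k^{(1)}(\rho)$ are holomorphic on $\bH_1$, and since $\rho|_{\Gamma_\rho}$ is trivial each coordinate satisfies the weight-$k$ modularity condition for $\Gamma_\rho$ and inherits holomorphy at the cusps. This yields an injection
\begin{gather*}
\rmM_k^{(1)}(\rho) \lhra \rmM_k(\Gamma_\rho)^{\dim V(\rho)}\tx{,}
\end{gather*}
reducing the problem to showing $\dim \rmM_k(\Gamma_\rho) \ll k$, with the implied constant allowed to depend on $\rho$.

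Next, as exploited already in the proof of Corollary~\ref{field}, $\rmU(1,1)(\Z)$ can be identified with $\big(\rmU(1) \cap \cO_E\big) \times \SL_2(\Z)$, the first factor being a finite group of roots of unity whose M\"obius action $\tau \mapsto u \bar u \tau = \tau$ on $\bH_1 = \mathbb{H}$ is trivial. Setting $\Gamma'_\rho := \Gamma_\rho \cap \SL_2(\Z)$, which still has finite index in $\SL_2(\Z)$, the inclusion $\Gamma'_\rho \subseteq \Gamma_\rho$ gives a further inclusion $\rmM_k(\Gamma_\rho) \subseteq \rmM_k(\Gamma'_\rho)$. Since $\Gamma'_\rho$ is a finite-index subgroup of $\SL_2(\Z)$, the compactified quotient $\overline{\mathbb{H}/\Gamma'_\rho}$ is a compact Riemann surface with finitely many cusps and elliptic points, and a standard Riemann--Roch computation yields $\dim \rmM_k(\Gamma'_\rho) = O(k)$ as $k \to \infty$. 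Combining the two inclusions then delivers $\dim \rmM_k^{(1)}(\rho) \ll \dim V(\rho) \cdot k$ as claimed.

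The only real check --- which is essentially bookkeeping and not a substantive obstacle --- is verifying that holomorphy of $f$ at the cusp of $\rmU(1,1)(\Z)$ descends, after choosing coordinates and restricting, to holomorphy of each component at the finitely many cusps of $\Gamma'_\rho$; this follows from standard Fourier-expansion arguments at each cusp representative, together with the finiteness of the set of $\Gamma'_\rho$-orbits of cusps.
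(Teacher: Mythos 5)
Your argument is internally consistent, but it proves a weaker statement than the paper needs, and the weakening is fatal for the way Lemma~\ref{grass} is used. The whole point of writing the bound as $\dim \mathrm{M}_k^{(1)}(\rho) \ll \dim V(\rho)\, k$ is that the implied constant is independent of $\rho$; otherwise the factor $\dim V(\rho)$ is vacuous, since it could be absorbed into the constant. Your route gives $\dim \mathrm{M}_k^{(1)}(\rho) \le \dim V(\rho) \cdot \dim \mathrm{M}_k(\Gamma'_\rho)$ with $\Gamma'_\rho = \ker\rho \cap \SL{2}(\Z)$, and by Riemann--Roch $\dim \mathrm{M}_k(\Gamma'_\rho)$ grows like $k/12$ times the index $[\SL{2}(\Z) : \Gamma'_\rho]$; you explicitly allow the constant to depend on $\rho$, that is, on this index. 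Downstream, however, Lemma~\ref{grass} is applied in Lemma~\ref{glass} to the representations $\rho_m^{(1)} \otimes \rho$, where $\rho_m^{(1)}$ is the Weil representation attached to $\Delta_1(m)$, whose kernel has index growing polynomially in $m$ (roughly the order of $\SL{2}$ over $\Z/N$ for a level $N \asymp m$). In Proposition~\ref{class} one then sums over $m$ up to $\lfloor 4k/\omega_1 \rfloor$, so your version of the bound would inject extra positive powers of $m \asymp k$ into that sum and yield $\dim \mathrm{FM}_k^{(2)}(\rho) \ll k^{4 + c}$ with some $c > 0$ instead of $k^4$. The exact matching of the exponent $g^2 + 1$ between Corollary~\ref{motive} and Corollary~\ref{cat} is what drives Theorem~\ref{lolita}, so the loss of uniformity in $\rho$ destroys the argument.

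The paper's proof differs precisely at this point: it regards $\mathrm{M}_k^{(1)}(\rho)$ as a subspace of vector-valued elliptic modular forms for $\SL{2}(\Z)$ of weight $k$ and arbitrary arithmetic type and quotes a dimension bound that is uniform in the type (\cite{MR2114161}, Theorem~2.5), of the shape $\dim V(\rho)\,\big(k/12 + O(1)\big)$ with an absolute constant. To repair your proof you must replace the passage to scalar forms on $\ker\rho$ by such a uniform vector-valued estimate (for instance Riemann--Roch for the rank-$\dim V(\rho)$ bundle attached to $\rho$ on the level-one modular curve, or a trace-formula bound); the remaining bookkeeping in your write-up --- splitting off the finite factor $\rmU(1) \cap \OK$ and checking holomorphy of the components at the cusps --- is fine and matches the identification the paper makes implicitly.
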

\begin{proof}
As $\mathrm{M}_k^{(1)} (\rho)$ is a subspace of elliptic modular forms  for $\SL{2} (\Z)$ of weight $k$ and some arithmetic type, the result follows from \cite{MR2114161}, Theorem 2.5.
\end{proof}

\begin{lemma}\label{glass}
Let $\rho$ be a complex finite dimensional representation of $\rmU (1, 1) (\Z)$ that factors through a finite quotient, then
\[\dim \mathrm{J}_{k, m}^{(1)} (\rho) \ll \dim V(\rho) k m^2 \tx{.}\]
\end{lemma}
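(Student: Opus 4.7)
The plan is to reduce the claim for Hermitian Jacobi forms of genus $1$ to the already-established bound for scalar/vector-valued Hermitian modular forms of genus $1$ via the theta decomposition, and then to control the dimension of the Weil representation explicitly.

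First, I would invoke Proposition~\ref{greece} with $g = l = 1$ and replace $\rho$ by $\rho$ itself in the statement to obtain an isomorphism
\[
\mathrm{J}_{k, m}^{(1)}(\rho) \;\cong\; \mathrm{M}_{k - 1}^{(1)}\bigl(\rho_m^{(1)} \otimes \rho\bigr)
\tx{.}
\]
This reduces the problem to bounding the dimension of a space of vector-valued Hermitian modular forms of genus $1$ of a specified type, where Lemma~\ref{grass} is already available.

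Second, I would compute (or estimate) the dimension of the representation space $\C[\Delta_1(m)]$. By definition $\Delta_1(m) = \OK^{\#}/m\OK$. Since $\OK^{\#} = \tfrac{1}{\sqrt{D_E}} \OK$, the group $\OK^{\#}/\OK$ has order $|D_E|$, while $\OK/m\OK$ has order $m^2$ (as $\OK$ is a free $\Z$-module of rank $2$). Therefore
\[
\dim V\bigl(\rho_m^{(1)}\bigr) = |\Delta_1(m)| = |D_E|\, m^2 \ll m^2
\tx{,}
\]
the implied constant depending only on the fixed imaginary quadratic field $E$.

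Third, I would check that the hypothesis of Lemma~\ref{grass} applies to $\rho_m^{(1)} \otimes \rho$. The Weil representation $\rho_m^{(1)}$ factors through a principal congruence subgroup of some level depending on $m$, so it factors through a finite quotient of $\rmU(1,1)(\Z)$; combined with the assumption on $\rho$, the tensor product $\rho_m^{(1)} \otimes \rho$ likewise factors through a finite quotient. Since $\dim V(\rho_m^{(1)} \otimes \rho) = \dim V(\rho_m^{(1)}) \cdot \dim V(\rho)$, applying Lemma~\ref{grass} yields
\[
\dim \mathrm{M}_{k - 1}^{(1)}\bigl(\rho_m^{(1)} \otimes \rho\bigr)
\ll \dim V(\rho_m^{(1)} \otimes \rho) \cdot (k - 1)
\ll \dim V(\rho) \cdot m^2 \cdot k
\tx{,}
\]
which together with the isomorphism from step one gives the desired estimate. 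I do not anticipate a serious obstacle here, since all the components are essentially mechanical; the only subtle point is the harmless verification that $\rho_m^{(1)}$ factors through a finite quotient so that the cited Lemma~\ref{grass} is indeed applicable.
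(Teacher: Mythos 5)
Your proposal is correct and follows exactly the paper's route: the paper's proof of Lemma~\ref{glass} is precisely the combination of the theta-decomposition isomorphism of Proposition~\ref{greece} (with $g = l = 1$) and the genus-one bound of Lemma~\ref{grass}. Your additional details --- the count $|\Delta_1(m)| = |D_E|\,m^2$ and the check that $\rho_m^{(1)} \otimes \rho$ factors through a finite quotient --- simply make explicit what the paper leaves implicit.
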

\begin{proof}
Combining Proposition~\ref{greece} and Lemma~\ref{grass}, we obtain the desired asymptotic bound.
\end{proof}
\begin{remark}
The case of trivial type $\rho$  was also proved by Haverkamp in \cite{MR1377681}, Theorems 1 and 3.
\end{remark}

\begin{proposition}\label{class}
Let $g \geq 2$ be an integer and $\rho$ be a complex finite dimensional representation of $\UGZ$ that factors through a finite quotient, then
\[
\dim \FMAA \ll_g \dim V (\rho)  k^{g^2} \tx{.}
\]
\end{proposition}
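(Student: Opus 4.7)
The plan is to induct on $g \geq 2$, using Lemma~\ref{orange} together with the positivity of $\omega_{g-1}$ from Corollary~\ref{field}, the theta decomposition of Proposition~\ref{greece}, and the embedding $\mathrm{M}_{k'}^{(g')}(\sigma) \hookrightarrow \mathrm{FM}_{k'}^{(g')}(\sigma)$ supplied by Lemma~\ref{lime} at $d = 0$.

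For the base case $g = 2$, Lemma~\ref{orange} gives $\dim \mathrm{FM}_k^{(2)}(\rho) \leq \sum_{m \geq 0} \dim \mathrm{J}_{k,m}^{(1)}(\rho)[m]$. By Proposition~\ref{apple} and Corollary~\ref{field}, the summand vanishes once $m > c_E^{-1} k/\omega_1$, cutting the sum off at $m = O(k)$; by Lemma~\ref{glass}, each remaining term is $\ll \dim V(\rho)\, k m^2$. Summing yields the target bound $\dim V(\rho) \, k \cdot k^3 = \dim V(\rho)\, k^{2^2}$.

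For the inductive step ($g \geq 3$), assume the result for $g - 1$. I would combine Lemma~\ref{orange}, Proposition~\ref{apple} (truncating the sum at $m = O_g(k)$), and the theta decomposition $\dim \JB(\rho) = \dim \mathrm{M}_{k-1}^{(g-1)}(\rho_m^{(g-1)} \otimes \rho)$ of Proposition~\ref{greece}. Invoking Lemma~\ref{lime} with $d = 0$ embeds this modular form space into $\mathrm{FM}_{k-1}^{(g-1)}(\rho_m^{(g-1)} \otimes \rho)$, and the induction hypothesis bounds its dimension by $\ll_g \dim V(\rho_m^{(g-1)} \otimes \rho)\, (k-1)^{(g-1)^2}$. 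The key estimate for the Weil representation is $\dim V(\rho_m^{(g-1)}) = |\Delta_{g-1}(m)| \ll_E m^{2(g-1)}$, since $\Delta_{g-1}(m) = (\cO_E^\#)^{g-1}/m(\cO_E)^{g-1}$ and each coordinate contributes a factor $\asymp m^2$.

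Summing over $m$ up to $O_g(k)$ and using the elementary $\sum_{m=0}^{O(k)} m^{2(g-1)} \ll k^{2g-1}$, the final exponent of $k$ becomes $(g-1)^2 + (2g - 1) = g^2$, which yields the stated bound. The main obstacle is essentially bookkeeping: the exponent $g^2$ emerges only because the inductive exponent $(g-1)^2$ combines exactly with the polynomial growth $m^{2(g-1)}$ of the Weil representation dimension and the summation range $m = O(k)$. The crucial structural input is the positivity of $\omega_{g-1}$ from Corollary~\ref{field}, without which Proposition~\ref{apple} would not truncate the sum and the argument would fail.
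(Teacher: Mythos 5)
Your proposal is correct and follows essentially the same route as the paper's proof: induction on $g$ with the base case via Lemma~\ref{orange}, Proposition~\ref{apple} and Lemma~\ref{glass}, and the inductive step chaining Lemma~\ref{orange}, Proposition~\ref{apple}, the theta decomposition of Proposition~\ref{greece}, the embedding of Lemma~\ref{lime}, the bound $|\Delta_{g-1}(m)| \ll_E m^{2(g-1)}$ for the Weil representation, and Corollary~\ref{field} to truncate the sum at $m = O_g(k)$. The exponent bookkeeping $(g-1)^2 + (2g-1) = g^2$ is exactly the paper's computation, so there is nothing further to add.
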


\begin{proof}
We show the assertion by induction. For $g = 2$, Combining Lemma~\ref{orange}, Proposition~\ref{apple}, and Lemma~\ref{glass}, we have asymptotic inequalities
\begin{linenomath}
\begin{gather*}
    \dim \mathrm{FM}_k^{(2)} (\rho) 
    \leq  \sum_{m = 0}^{\lfloor 4 \frac{k}{\omega_{1}} \rfloor} \dim \mathrm{J}_{k, m}^{(1)} (\rho) [m] 
    \ll \dim V(\rho) k \sum_{m = 0}^{\lfloor 4 \frac{k}{\omega_{1}} \rfloor} m^2 
    \ll \dim V(\rho) k^4
    \tx{.}
\end{gather*}
\end{linenomath}
Assume the assertion is true for $g = n - 1$ with $n \geq 3$, and we show it for $g = n$.  Applying Lemma~\ref{orange}, Proposition~\ref{apple},  Proposition~\ref{greece}, and Lemma~\ref{lime} successively, we have \begin{linenomath}
\begin{gather}\label{book}
    \dim \FMN 
    \leq  \sum_{m = 0}^{\lfloor 4 \frac{k}{\omega_{n - 1}} \rfloor} \dim \JAA[m] 
    \leq \sum_{m = 0}^{\lfloor 4 \frac{k}{\omega_{n - 1}} \rfloor} \dim \mathrm{FM}_{k - 1}^{(n - 1)} (\rho_m^{(n - 1)} \otimes \rho)
    \tx{.}
\end{gather}
\end{linenomath}
The induction hypothesis and the rank of $\Delta_{n - 1} (m)$ for the Weil repsentation imply that
\begin{linenomath}
 \begin{gather*}
     \dim \mathrm{FM}_{k - 1}^{(n - 1)} (\rho_m^{(n - 1)} \otimes \rho) \ll_{n - 1} \dim V(\rho) m^{2 (n - 1)} k^{(n - 1)^2}
     \tx{,}
 \end{gather*}
\end{linenomath}
whence the desired assertion for $g = n$ together with \eqref{book} and Corollary~\ref{field}.
\end{proof}

\begin{corollary}\label{motive}
Let $g$ be a positive integer, then 
\[
\dim \FMK \ll_g k^{g^2 + 1}\tx{.}
\]
\end{corollary}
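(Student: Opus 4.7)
The plan is to deduce this corollary directly from Proposition~\ref{class} by summing over all weights up to $k$. Since $\FMK = \bigoplus_{j \leq k} \mathrm{FM}_j^{(g)}$ (with the trivial arithmetic type and cogenus $l = 1$ suppressed in the notation), we have
\begin{gather*}
\dim \FMK = \sum_{j=0}^{k} \dim \mathrm{FM}_j^{(g)}.
\end{gather*}

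Now I would apply Proposition~\ref{class} with $\rho$ equal to the trivial one-dimensional representation, which yields $\dim \mathrm{FM}_j^{(g)} \ll_g j^{g^2}$ for each weight $j \geq 0$. Substituting this bound termwise into the above sum, I obtain
\begin{gather*}
\dim \FMK \ll_g \sum_{j=0}^{k} j^{g^2} \ll_g k^{g^2 + 1},
\end{gather*}
where the final asymptotic comparison is the standard estimate $\sum_{j=0}^{k} j^{N} \ll k^{N+1}$ for any fixed non-negative integer $N$ (obtained for instance by bounding each summand by $k^N$ and noting there are $k+1$ summands, or more sharply via the integral comparison $\sum_{j=0}^{k} j^N \sim \frac{k^{N+1}}{N+1}$).

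There is no real obstacle here, as the statement is a formal consequence of the already-proven Proposition~\ref{class}. The only subtle point is bookkeeping: to confirm that the $g$-dependence of the implicit constant in Proposition~\ref{class} propagates correctly, and that the polynomial bound $j^{g^2}$ may be applied uniformly for all $0 \leq j \leq k$ (absorbing the case $j=0$ into the constant), so that the final constant depends only on $g$. No new ingredient from the theory of Hermitian modular or Jacobi forms is required.
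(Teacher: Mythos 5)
Your argument is exactly the intended one: the paper states Corollary~\ref{motive} without proof as an immediate consequence of Proposition~\ref{class}, and summing the bound $\dim \mathrm{FM}_j^{(g)} \ll_g j^{g^2}$ (trivial type, the negative-weight pieces vanishing) over $0 \leq j \leq k$ is precisely how it follows. So the proposal is correct and takes essentially the same route as the paper.
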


To estimate a lower bound for the dimension of each graded piece $\MA$, we recall the following Selberg trace formula applied to the dimensions of Hermitian modular cusp forms. We put a constant 
\[C (k, g) := 
2^{- g^2 - g} \pi^{- g^2} 
\prod_{0 \leq i, j \leq g - 1} 
(k - 2g + 1 + i + j)\tx{,}\] 
a discrete sum as a kernel function
\[
S(k, g, \tau):=
\sum_{\ga \in \UGZ / Z \big(\UGZ \big)}
\overline{j (\ga, \tau)}^{\, - k} \Big(\det \big(\frac{1}{2i} (\tau - (\ga\tau)^\ast ) \big) \Big)^{- k}\tx{,}\]
where  $Z \big(\UGZ \big)$ is the center of $\UGZ$,
and the (weight-$k$ normalized) hyperbolic volume form 
\[\rmd \tau = \Big(\det \big(\frac{1}{2i} (\tau - \tau^\ast) \big) \Big)^{k - 2g} \rmd_E \tau\]
where $\rmd_E \tau$ is the (complex) Euclidean measure .
\begin{proposition}[(\cite{MR871665}, Page 62)]\label{dostoevsky}
Let $k > 4 g - 2$ be an even integer. Then,
\[
   \dim S_k^{(g)}(\UGZ) =  C (k, g) \int_{Y_g}  S(k, g, \tau) \rmd \tau
   \tx{.}
\]
\end{proposition}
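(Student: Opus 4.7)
The plan is to follow the standard derivation of Selberg's trace formula applied to holomorphic cusp forms on a Hermitian symmetric domain, specialized to the unbounded realization $\bH_g$. The dimension is computed as the trace of the orthogonal projection onto cusp forms inside the Hilbert space $L^2_k(\Gamma\backslash\bH_g)$ of $k$-automorphic square-integrable functions with respect to the invariant measure $\det(y)^{-2g}d_E\tau$, and this projection has an explicit reproducing kernel built from the Bergman kernel.

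First I would recall the Bergman reproducing kernel for weight $k$ on $\bH_g$. On the bounded realization one has a classical closed form, and transferring to the unbounded model via the Cayley transform yields a kernel proportional to $\det\bigl(\tfrac{1}{2i}(\tau-\sigma^{\ast})\bigr)^{-k}$. Evaluating the proportionality constant reduces to a Siegel–Gindikin-type integral on the cone $\mathrm{Herm}_g(\bR)_{>0}$, producing precisely the factor $C(k,g) = 2^{-g^2-g}\pi^{-g^2}\prod_{0\leq i,j\leq g-1}(k-2g+1+i+j)$ through the standard product formula for the Gamma function of the cone.

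Next I would form the Poincaré-averaged kernel
\[
K(\tau,\sigma) \;=\; C(k,g)\sum_{\gamma\in\Gamma/Z(\Gamma)} \overline{j(\gamma,\sigma)}^{-k}\,\overline{\det\!\Bigl(\tfrac{1}{2i}(\tau-(\gamma\sigma)^{\ast})\Bigr)}^{-k},
\]
where $\Gamma=\UGZ$, and verify that $K$ represents the orthogonal projection from $L^2_k(\Gamma\backslash\bH_g)$ onto $S_k^{(g)}(\Gamma)$. For this one checks, using the reproducing property of the Bergman kernel applied term-by-term and a standard unfolding argument, that $K\ast f = f$ for every cusp form $f$ of weight $k$. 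Taking the trace of this finite-rank projection and computing it geometrically as $\int_{Y_g} K(\tau,\tau)\,\det(y)^{-2g}d_E\tau$ yields exactly $C(k,g)\int_{Y_g} S(k,g,\tau)\,d\tau$ once the weight-$k$ normalization is folded into the measure $d\tau$ as defined above.

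The main obstacle is analytic: one must guarantee absolute convergence of the Poincaré sum $S(k,g,\tau)$ on the fundamental domain $Y_g$, which is where the hypothesis $k>4g-2$ enters as a Godement-type convergence bound, and one must control contributions from the cusps of the non-compact quotient $\Gamma\backslash\bH_g$ to ensure that the geometric trace really picks up only the cuspidal spectrum (not the Eisenstein continuum). Both points are carried out in the cited reference \cite{MR871665}; once convergence and the cuspidal projection identity are in place, equating the spectral side (the dimension) with the geometric side (the integral) gives the stated formula.
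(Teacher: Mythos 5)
The paper does not prove Proposition~\ref{dostoevsky}: it is imported verbatim from \cite{MR871665}, so there is no internal argument to compare yours against. Your sketch is the standard Godement--Selberg derivation via the weight-$k$ Bergman kernel on $\bH_g$ --- form the $\Gamma$-averaged reproducing kernel, identify it with the orthogonal projection of the weight-$k$ $L^2$-space on $\UGZ \backslash \bH_g$ onto cusp forms, and compute its trace geometrically as the integral of the kernel along the diagonal --- which is exactly the route carried out in the cited reference, so you are reproducing the intended proof rather than offering an alternative. Be aware, though, that the two points you defer to that reference (absolute convergence of the Poincar\'e sum under $k > 4g-2$, and the verification that on the non-compact quotient the geometric trace picks up only the cuspidal spectrum) constitute essentially all of the substantive content, so your write-up is a correct roadmap rather than a self-contained proof. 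One small slip: in your displayed kernel you conjugate the determinant factor as well as $j(\gamma,\sigma)$, which makes the expression antiholomorphic in $\tau$; the reproducing kernel should be holomorphic in $\tau$ and antiholomorphic in $\sigma$, consistent with the unbarred determinant in the paper's definition of $S(k,g,\tau)$.
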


Observing the asymptotic behavior of $C(k, g)$ and the fact that $\int_{Y_g} S(k, g, \tau) \rmd \tau $ are bounded by some positive constant from below for sufficiently large even integer $k$, we deduce an asymptotic lower bound for the dimension of Hermitian modular forms $\MK$ of degree $g$ and weight at most $k$.

\begin{corollary}\label{cat}
Let $g$ be a positive integer, then 
\[
\dim \MK \gg_g k^{g^2 + 1}\tx{.}
\]
\end{corollary}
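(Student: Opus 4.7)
The plan is to extract the asymptotic lower bound directly from Proposition~\ref{dostoevsky}, since $\MK \supseteq \bigoplus_{k' \leq k,\, k' \text{ even}} S_{k'}^{(g)}(\UGZ)$, and it therefore suffices to show $\dim S_{k}^{(g)}(\UGZ) \gg_g k^{g^2}$ for sufficiently large even $k$ and then sum over $k' \leq k$.

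The first step is to analyze the explicit constant $C(k,g)$ from Proposition~\ref{dostoevsky}. Since it is a product over $0 \leq i, j \leq g-1$ of $g^2$ linear factors of the form $k - 2g + 1 + i + j$, we immediately get
\begin{equation*}
C(k,g) \;\sim\; 2^{-g^2 - g}\pi^{-g^2}\, k^{g^2}
\qquad(k \to \infty),
\end{equation*}
so $C(k,g) \gg_g k^{g^2}$.

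The second step, which is the main point, is to show that the integral $I(k,g) := \int_{Y_g} S(k,g,\tau)\,\rmd\tau$ is bounded below by a positive constant uniform in $k$ for all sufficiently large even $k$. The standard Selberg/Godement argument is to isolate the contribution of the identity coset of $\UGZ / Z(\UGZ)$: since $j(e,\tau) = 1$ and $e \cdot \tau = \tau$, the identity term of $S(k,g,\tau)$ is $(\det y)^{-k}$, where $y = \frac{1}{2i}(\tau - \tau^\ast)$, and multiplying by the volume form $(\det y)^{k-2g}\rmd_E\tau$ gives the $k$-independent, positive, finite integral $\int_{Y_g}(\det y)^{-2g}\rmd_E\tau$, which is the invariant volume of the quotient. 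The non-identity terms, by a routine majorization on a fundamental domain, are bounded in absolute value by $C \cdot A^{-k}$ for some constants $C > 0, A > 1$ depending only on $g$ (the geometric-side absolute convergence of the trace formula for sufficiently large $k$), and hence tend uniformly to $0$. Thus $I(k,g)$ converges to $\mathrm{vol}(Y_g) > 0$ as $k \to \infty$, and in particular is bounded below by a positive constant for $k$ large enough.

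The third step combines these: $\dim S_{k}^{(g)}(\UGZ) = C(k,g)\,I(k,g) \gg_g k^{g^2}$ for sufficiently large even $k$. Summing over even integers $k' \in [k/2, k]$ (say) gives $\dim \MK \geq \sum_{k' \text{ even},\, k/2 \leq k' \leq k} \dim S_{k'}^{(g)}(\UGZ) \gg_g k \cdot k^{g^2} = k^{g^2+1}$, as desired.

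The main obstacle is the uniform-in-$k$ positive lower bound for $I(k,g)$; once the identity term is singled out and the remaining geometric-side terms are shown to decay, the rest is bookkeeping on the polynomial asymptotics of $C(k,g)$ and summation over weights.
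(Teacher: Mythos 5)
Your proposal is correct and follows essentially the same route as the paper, whose proof of Corollary~\ref{cat} is exactly the observation that $C(k,g)\gg_g k^{g^2}$ together with a uniform positive lower bound on $\int_{Y_g} S(k,g,\tau)\,\rmd\tau$ for large even $k$, followed by summation over weights $k'\leq k$. The only cosmetic caveat is in your second step: the non-identity terms are not uniformly bounded by $C\cdot A^{-k}$ pointwise near elliptic fixed points (where the relevant ratio tends to $1$), so the decay of their total contribution should be phrased as an integrated/dominated-convergence statement, which is the standard argument the paper implicitly invokes.
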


\begin{theorem}\label{lolita}
For every integer $g \geq 2$, the graded algebra $\FMG$ is an algebraic extension over the graded algebra $\MG$. 
\end{theorem}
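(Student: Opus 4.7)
The plan is to derive algebraicity from a dimension-counting contradiction, leveraging the two key estimates already established: the upper bound $\dim \FMK \ll_g k^{g^2+1}$ of Corollary~\ref{motive} and the lower bound $\dim \MK \gg_g k^{g^2+1}$ of Corollary~\ref{cat}. The fact that these two asymptotics have the \emph{same} polynomial order in $k$ — a coincidence engineered by the nonvanishing of the slope bound $\omega_g$ of Corollary~\ref{field} — leaves no ``room'' in $\FMG$ for elements transcendental over $\MG$.

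Since $\FMG = \bigoplus_k \FMA$ is a graded algebra over $\MG$, it is enough to prove that every homogeneous element $f \in \FMA$ is algebraic over $\MG$. I would argue by contradiction: suppose $f \in \FMA$ of weight $k_0 \geq 1$ is transcendental over $\MG$. Then the powers $1, f, f^2, \ldots$ are $\MG$-linearly independent, so for every $j$ the multiplication-by-$f^j$ map
\[
 \MA_{\leq k - j k_0} \lhra \FMK,\qquad h \lmto h f^j,
\]
is injective, and the images for distinct $j$ are in direct sum position (otherwise a nontrivial algebraic relation $\sum_j h_j f^j = 0$ over $\MG$ would emerge). Summing these images for $0 \leq j \leq \lfloor k/(2k_0)\rfloor$ yields
\[
 \dim \FMK \;\geq\; \sum_{j=0}^{\lfloor k/(2k_0)\rfloor} \dim \MA_{\leq k - j k_0}.
\]

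Each term on the right is bounded below by $\gg_g (k/2)^{g^2+1}$ via Corollary~\ref{cat}, and there are $\gg k/k_0$ such terms, giving $\dim \FMK \gg_g k^{g^2+2}/k_0$. For $k$ large, this contradicts the upper bound $\dim \FMK \ll_g k^{g^2+1}$ from Corollary~\ref{motive}. Hence no such transcendental $f$ exists, and $\FMG$ is algebraic over $\MG$.

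The only step that required real work was obtaining the matching asymptotics; in particular, the lower bound $\omega_g > 0$ on the slope (which propagates into the upper bound via Proposition~\ref{apple} and the induction in Proposition~\ref{class}) was the main obstacle, and this is where the norm-Euclidean hypothesis entered through Lemma~\ref{cell}. Given those bounds, the algebraicity itself is a short counting argument with no further obstruction.
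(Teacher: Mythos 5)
Your argument is correct and is essentially the paper's own proof: both pit the upper bound of Corollary~\ref{motive} against the lower bound of Corollary~\ref{cat}, using the $\MG$-linear independence of the powers $f^i$ of a putative transcendental homogeneous element to force a dimension contradiction. The only difference is bookkeeping — the paper fixes the number $n$ of powers inside $\mathrm{FM}_{\leq k + n k_0}^{(g)}$ and contradicts the uniform constant, while you let the number of powers grow linearly in $k$ and contradict the exponent ($k^{g^2+2}$ versus $k^{g^2+1}$) — which changes nothing of substance.
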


\begin{proof}
It suffices to show that for an arbitrary $f \in \FMO$ of weight $k_0$, the set $\big\{f^i : i \in \N \big\}$ is of finite rank over the graded algebra $\MG$. Indeed, for any positive integers $k, n$ such that $k \geq n k_0$, by the fact that $\sum_{i = 0}^n \MK f^i  \subseteq \mathrm{FM}_{\leq k + n k_0}^{(g)}$, Corollary~\ref{motive}, and Corollary~\ref{cat}, we have
\[\dim_\C \big( \sum_{i = 0}^n \MK f^i \big)
\leq \dim_\C \mathrm{FM}_{\leq k + n k_0}^{(g)} 
\ll_g (k + n k_0)^{g^2 + 1}
\ll_g k^{g^2 + 1}
\ll_g \dim_\C \MK
\tx{,}\]
which implies the assertion.
\end{proof}

\section{Hermitian modular forms are algebraically closed in symmetric formal Fourier--Jacobi series}\label{fly}
In this section, we work in the case of an arbitrary imaginary quadratic field $E/\Q$, except Proposition~\ref{institute} which employs Lemma~\ref{toy}. For that part we assume $E$ is of class number $1$, in order to follow the simple definition of symmetric formal Fourier--Jacobi series throughout the manuscript. We prove Theorem~\ref{ulysses} based on the following strategy: in Subsection~\ref{roses}, we study the geometry of a toroidal boundary of the Hermitian modular variety and show that every prime divisor on the toroidal compactification intersects any open neighborhood of the toroidal boundary; we then recall the general construction of toroidal compactifications and the Weierstrass preparation theorem to show the local convergence of a symmetric formal Fourier--Jacobi series on an open neighborhood of the toroidal boundary in Subsection~\ref{lavender}; combining these two aspects, we conclude our argument in Subsection~\ref{irises} to show the global convergence of symmetric formal Fourier--Jacobi series. 

\subsection{Toroidal boundaries of Hermitian modular varieties}\label{roses}
The aim of this subsection is to show Theorem~\ref{crossing} based on the following strategy: the analogous result in the Siegel case for the Satake boundary, Lemma~\ref{lem:Jan_Martin_top}, was proven in the work of Bruinier--Raum; to reduce to the Siegel case, we embed the Siegel upper half space into the Hermitian upper half space both in a generic way and over $\Q$ via a density argument; finally we compare the behavior of various boundaries under the constructed maps. We also point out a cohomological approach to the theorem at the end of this subsection.

The theory of toroidal compactifications was developed by Ash--Mumford--Rapoport--Tai \cite{ash-mumford-rapoport-tai-1975}. A brief discussion of the construction of partial compacitifications of Siegel modular varieties can be found for instance in \cite{Hulek-Sakaran-2002} and \cite{bruinier-van-der-geer-harder-zagier-2008}.  Let $E/\Q$ be a fixed imaginary quadratic field  with an embedding $E \lhra \C$. The unitary group $\rmU (g, g) := \rmU_{E/\Q} (g, g)$ is a semi-simple algebraic group defined over $\Q$, and the group $\UGR^o$ is the connected component of $\mathrm{Aut} (\bD_g)$, for the Hermitian symmetric domain $\bD_g = \big\{\sigma \in \mathrm{Mat}_g (\C): \sigma^\ast \sigma < I_g \big\} \cong \bH_g$. For the arithmetric group $\GL{g} (\OK) \subseteq \UGR^o$, let $\{\sigma_\alpha\}_\alpha$ be a $\GL{g} (\OK)$-admissible collection of polyhedra. By Theorem 5.2 of \cite{ash-mumford-rapoport-tai-1975}, there is a unique toroidal compactification $X_g$ of the Hermitian modular variety $Y_g := \UGZ \backslash \Hg$ associated to this set of data, which is nonsingular in the orbifold sense. We write $\partial Y_g := X_g \setminus Y_g$ for the toroidal boundary.

Let $\overline{Y_g}$ be the Satake compactification of $Y_g$, and let 
\[\pi : X_g \lra \overline{Y_g}\]
be the natural map, which exists by the definition of the minimal compactification $\overline{Y_g}$. Let $\phi: \bH_g \lra Y_g$ be the quotient map. Recall that $\UGR$ acts transitively on the Hermitian upper half space $\bH_g$ via M\"obius transformations. In particular, for each $\ga \in \UGR$, we obtain an embedding of the Siegel upper half space $\cH_g$ into $\bH_g$ by sending $\tau \in \cH_g$ to $\ga \tau$, and the images $\ga \cH_g \subseteq \bH_g$ for $\ga \in \UGR$ cover $\bH_g$. Let $\cY_g :=  \Sp{g}(\bZ) \backslash \cH_g$ and  $\overline{\cY_g}$ denote the Siegel modular variety of degree $g$ and its Satake compactification, respectively.
\begin{lemma}\label{lem:density_rationality}
Let $D$ be a prime divisor on $Y_g$. Then, there is some rational point $\ga \in \UGQ$ such that
$\phi (\ga \cH_g)$ intersects $D$ transversally at some point. 
\end{lemma}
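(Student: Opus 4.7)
The plan is to reduce the question to a local one at a smooth point of $\tilde D := \phi^{-1}(D) \subseteq \bH_g$, and then to combine an irreducibility argument for the isotropy representation with the density of rational points in $\UGR$. First I would choose a smooth point $p_0$ of the $\UGZ$-invariant complex analytic hypersurface $\tilde D$, lying outside the orbifold locus of $\phi$; such points form a dense subset of $\tilde D$. By transitivity of the $\UGR$-action on $\bH_g$, one can find $\gamma_1 \in \UGR$ with $\gamma_1 \cdot (iI_g) = p_0$, so that $\gamma_1 \cH_g$ is a complex submanifold of $\bH_g$ through $p_0$ of complex dimension $g(g+1)/2$.

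Next I would produce $\gamma \in \UGR$ (not yet rational) for which $\gamma\cH_g$ meets $\tilde D$ transversally at $p_0$, by exploiting the stabilizer of $p_0$. Let $K_0 \subseteq \UGR$ denote this stabilizer, a maximal compact subgroup of type $\mathrm{U}(g) \times \mathrm{U}(g)$; its isotropy action on $T_{p_0}\bH_g \cong \Mat{g}(\C)$ is the complex-irreducible representation $(u,v) \cdot M = u M v^{-1}$. Write $W := T_{p_0}(\gamma_1 \cH_g)$ and $H := T_{p_0}\tilde D$, a complex hyperplane. If every translate $dk(W)$ with $k \in K_0$ were contained in $H$, then the complex-linear span of $\bigcup_{k \in K_0} dk(W)$ would be a nonzero $K_0$-stable subspace of $H$; irreducibility would force this span to equal $T_{p_0}\bH_g$, contradicting $H \subsetneq T_{p_0}\bH_g$. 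Hence some $k \in K_0$ satisfies $dk(W) \not\subseteq H$, and $\gamma := k\gamma_1$ realizes transversal intersection of $\gamma\cH_g$ and $\tilde D$ at $p_0$.

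Finally I would descend from $\UGR$ to $\UGQ$ by density. The condition ``$\gamma\cH_g$ intersects $\tilde D$ transversally at some point in a fixed small neighborhood of $p_0$'' is open in $\gamma \in \UGR$, via the implicit function theorem applied to a local holomorphic defining equation of $\tilde D$. Because $\UG$ is a connected reductive algebraic group over $\Q$ satisfying weak approximation (indeed it is quasi-split and splits over $E$), the rational points $\UGQ$ are dense in $\UGR$ in the real topology; hence an open neighborhood of $\gamma$ contains a rational element $\gamma^\prime \in \UGQ$. Since $p_0$ lies outside the orbifold locus, $\phi$ is a local biholomorphism at the relevant intersection point, so transversality in $\bH_g$ descends to the desired transversality of $\phi(\gamma^\prime \cH_g)$ and $D$ in $Y_g$.

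The main obstacle I expect is the representation-theoretic step in the middle paragraph: one must invoke the complex-irreducibility of the isotropy representation of $K_0$ on the holomorphic tangent space $T_{p_0}\bH_g$, which is a standard but non-trivial fact about the Hermitian symmetric domain $U(g,g)/K_0$, and then carefully translate it into the Grassmannian-level statement that the $K_0$-orbit of the subspace $W$ is not contained in the Schubert variety of subspaces of the fixed hyperplane $H$. The remaining ingredients (transitivity, openness of transversality, weak approximation) are standard.
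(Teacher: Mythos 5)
Your proposal is correct, and its skeleton is the same as the paper's: produce some $\ga_0 \in \UGR$ for which $\ga_0\cH_g$ meets $\phi^{-1}(D)$ transversally at a point, observe that transversality at a nearby point is an open condition in $\ga$, and conclude by density of $\UGQ$ in $\UGR$. The only real difference is how the real element is produced. The paper invokes the transitivity of $\UGR$ on the holomorphic unit tangent bundle of $\bH_g$ together with conformality of the M\"obius action, which at a fixed smooth point of $\phi^{-1}(D)$ lets one rotate a tangent vector of the embedded $\cH_g$ off the tangent hyperplane of the divisor; you instead fix the point, pass to the stabilizer $K_0\cong \rmU(g)\times\rmU(g)$, and use the complex irreducibility of the isotropy representation on $T_{p_0}\bH_g\cong\Mat{g}(\C)$ (namely $M\mapsto uMv^{-1}$, i.e.\ $\C^g\boxtimes(\C^g)^\vee$) plus a span argument to find $k\in K_0$ with $dk(W)\not\subseteq H$; since $H$ has complex codimension $1$, this is exactly transversality. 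The two inputs are interchangeable standard facts about the symmetric space (transitivity on unit tangent vectors is the stronger statement, irreducibility suffices for your span argument), so both proofs are valid; your version is somewhat more explicit at the linear-algebra level, and you also spell out two points the paper leaves implicit, namely choosing the smooth point of $\phi^{-1}(D)$ away from the elliptic (orbifold) locus so that transversality descends through $\phi$, and justifying the density of $\UGQ$ in $\UGR$ by real/weak approximation for the connected group $\UG$.
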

\begin{proof}
For any $\ga \in \UGR$, $\phi (\ga \cH_g)$ intersects $D$ transversally if and only if $\cH_g$ intersects $\ga^{-1} \phi^{-1} (D)$ transversally. It is a general fact that $\UGR$ acts transitively on the holomorphic unit tangent bundle of $\bH_g$ via $\ga (\tau, v) := (\ga \tau, \widehat{\rmd \ga_\tau v})$, where $\widehat{\rmd \ga_\tau v}$ denotes the unit vector associated to the pushforward of $v$ by the smooth map $\ga$ at $\tau$. Since $\phi^{-1} (D)$ is an embedded Hermitian submanifold of codimension $1$ in $\bH_g$, and M\"obius transformations are conformal with respect to the Hermitian metric, there is $\ga_0 \in \UGR$ such that $\cH_g$ intersects $\ga_0^{-1} \phi^{-1} (D)$ transversally at some point. In particular, there is an open neighborhood $W \subseteq \UGR$ of $\ga_0$ such that for all $\ga \in W$, $\cH_g$ intersects $\ga^{-1} \phi^{-1} (D)$ transversally at some point. The density of $\UGQ$ in $\UGR$ then implies the assertion.
\end{proof}

\begin{lemma}\label{lem:congruence_covering_immersion}
For every $\ga \in \UGQ$, there is a positive integer $N = N(\ga)$ such that the induced map
\begin{align*}
    [\ga]: \cY_g(N) = \Gamma(N) \backslash \cH_g &\lra Y_g = \UGZ \backslash \bH_g
\\
    [\tau] &\lmto [\ga \tau]
\end{align*}
is a local homeomorphism onto the image. Furthermore, the map can be continuously extended to
$\overline{[\ga]}: \overline{\cY_g (N)} \lra \overline{Y_g}$ on the Satake compactification, and for every open neighborhood $V$ of the Satake boundary $\overline{Y_g} \setminus Y_g$, there is an open neighborhood $U$ of the Satake boundary $\overline{\cY_g (N)} \setminus \cY_g (N)$, such that $\overline{[\ga]} (U) \subseteq V$.
\end{lemma}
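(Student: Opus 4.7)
The plan is to proceed in three stages: the construction of $N$ together with the verification that $[\ga]$ is a well-defined local homeomorphism; the set-theoretic and continuous extension to the Satake compactifications; and the resulting neighborhood statement.

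First, I would choose $N = N(\ga)$ as follows. Since $\ga \in \UGQ \subseteq \GL{2g}(E)$, there is a positive integer $d$ with both $d \ga$ and $d \ga^{-1}$ in $\Mat{2g}(\OK)$. Taking $N = d^2$, enlarged if necessary so that $N \geq 3$, for any $\delta = I_{2g} + N M \in \GN$ with $M \in \Mat{2g}(\bZ)$ one computes
\begin{gather*}
\ga \delta \ga^{-1} = I_{2g} + (d\ga) M (d\ga^{-1}) \in \Mat{2g}(\OK);
\end{gather*}
since $\delta \in \UGR$, so does $\ga \delta \ga^{-1}$, and hence $\ga \GN \ga^{-1} \subseteq \UGZ$. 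This makes $[\ga]$ well-defined. Minkowski's lemma (using $N \geq 3$) ensures that $\GN$ is torsion-free and acts freely and properly discontinuously on $\cH_g$, so $\cY_g(N)$ is a complex manifold. For the local homeomorphism property, fix $[\tau_0] \in \cY_g(N)$ with lift $\tau_0 \in \cH_g$, and pick an open neighborhood $B$ of $\tau_0$ small enough that both $B \to \cY_g(N)$ and $\ga B \to Y_g$ are homeomorphisms onto their images (possible because $\UGZ$ also acts properly discontinuously on $\bH_g$); the biholomorphism $\ga: B \to \ga B$ then yields the desired local homeomorphism near $[\tau_0]$.

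Second, for the Satake compactifications, recall that these are obtained by adjoining rational boundary components indexed by proper rational totally isotropic subspaces of $(\bQ^{2g}, J_{g,g})$ for $\Sp{g}$ and of $(E^{2g}, J_{g,g})$ for $\rmU(g, g)$. Since the unitary form is an extension of scalars of the symplectic form, every rational isotropic flag for $\Sp{g}$ extends to such a flag for $\rmU(g, g)$, and applying $\ga \in \UGQ$ sends the latter to another rational isotropic flag for $\rmU(g, g)$. Combined with the equivariance from the first step, this yields a set-theoretic map $\overline{[\ga]}: \overline{\cY_g(N)} \to \overline{Y_g}$ that carries the Siegel Satake boundary into the Hermitian Satake boundary. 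Its continuity in the Satake topology follows from the standard description of neighborhoods of boundary strata in terms of horospherical coordinates with respect to the relevant rational parabolic subgroups, together with the compatibility of the linear action of $\ga$ with these unipotent structures.

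Finally, continuity of $\overline{[\ga]}$ together with the boundary-to-boundary property immediately gives the last assertion: for any open $V \supseteq \overline{Y_g} \setminus Y_g$, set $U := \overline{[\ga]}^{-1}(V)$, which is open and contains $\overline{\cY_g(N)} \setminus \cY_g(N)$. The main obstacle I anticipate is the continuity claim in the second step: the Satake topology is specified somewhat indirectly through horospherical neighborhoods of each rational stratum, so verifying that the linear action of $\ga$ carries such neighborhoods on the Siegel side into the corresponding ones on the Hermitian side requires a local argument tracking the unipotent radicals of the pertinent parabolics. All other components of the proof are essentially formal consequences of proper discontinuous actions and covering-space theory.
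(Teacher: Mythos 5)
Your first and third steps coincide with the paper's proof: the choice of $N$ via a denominator $d$ with $d\ga, d\ga^{-1}$ integral and the computation $\ga\delta\ga^{-1}=I_{2g}+(d\ga)M(d\ga^{-1})$ is exactly the paper's argument (with $N=N_1N_2$ there), and the final neighborhood is taken as $U:=\overline{[\ga]}^{\,-1}(V)$ in both. (Minor point: if you enlarge $N$ to force $N\geq 3$, you must take a multiple of $d^2$, or the conjugation computation no longer applies.) The genuine gap is in your justification of the local homeomorphism. Torsion-freeness of $\Gamma(N)$ handles the source, but the claim that one can shrink $B$ so that $\ga B\to Y_g$ is a homeomorphism onto its image ``because $\UGZ$ acts properly discontinuously on $\bH_g$'' is false: proper discontinuity only guarantees that for small $B$ the elements $\delta\in\UGZ$ with $\delta\ga B\cap\ga B\neq\emptyset$ lie in the finite stabilizer of $\ga\tau_0$, and that stabilizer may be non-central. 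Concretely, take $\ga=I_{2g}$ and $\tau_0=iI_g\in\cH_g$: the element $J_{g,g}\in\Sp{g}(\Z)\subseteq\UGZ$ fixes $iI_g$ and acts by $\tau\mapsto-\tau^{-1}$, so $[\tau]_N$ and $[-\tau^{-1}]_N$ are distinct points of $\cY_g(N)$ arbitrarily close to $[iI_g]_N$ with the same image in $Y_g$; no choice of $B$ makes the projection injective there, and your chart argument breaks at every $[\tau_0]$ for which $\ga\tau_0$ is an elliptic fixed point of $\UGZ$. The paper instead asserts (tersely) that $\cY_g(N)\to\mathrm{im}\,[\ga]$ is a covering map with degree governed by $[\ga^{-1}\UGZ\ga\cap\Sp{g}(\Z):\Gamma(N)]$; to make either argument airtight one must control the extra identifications coming from non-central stabilizers inside $\ga\cH_g$, or weaken the conclusion to an open map with discrete fibres, which is all that is actually used in Theorem~\ref{crossing}.

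On the Satake extension, your description via rational boundary components and the action of $\ga\in\UGQ$ on rational isotropic flags is the right picture, but you explicitly leave the continuity check (compatibility of $\ga$ with the horospherical/Siegel-set neighborhoods defining the Satake topology) unverified, calling it the main obstacle. The paper gives no more detail here either — it simply states that the continuous extension exists and maps boundary to boundary — so on this point your write-up matches the source's level of rigor, but as a standalone proof it remains an acknowledged gap: you still need to verify that for each rational boundary component $F$ of $\cH_g$ the Satake neighborhoods of its image are carried into Satake neighborhoods of the Hermitian rational boundary component determined by $\ga F$. Once continuity and the boundary-to-boundary property are in place, your concluding step with $U:=\overline{[\ga]}^{\,-1}(V)$ is correct and identical to the paper's.
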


\begin{proof}
As $\ga \in \UGQ$, there are positive integers $N_1, N_2$ such that $N_1 \ga$ and $N_2 \ga^{-1}$ are integral. Let $N := N_1 N_2$, then for any $\ga_N \in \Gamma (N)$, we have
\[\ga \ga_N \ga^{-1} - I_{2g} = 
\ga (\ga_N - I_{2g}) \ga^{-1} \in \ga N \Sp{g} (\bZ) \ga^{-1} \subseteq \UGZ\tx{,}\]
hence $\ga \Gamma (N) \ga^{-1} \subseteq \UGZ$. In other words, the map $[\ga]$ is well defined. It is clear that the induced surjective map onto the image is a covering map (of degree
$[\ga^{-1} \UGZ \ga \cap \Sp{g} (\bZ) : \Gamma(N)] $),
hence a local homeomorphism. Therefore, the continuous extension can be defined with respect to the topology of the Satake compactification. The boundary is mapped to the boundary under $\overline{[\ga]}$, hence $U := \overline{[\ga]}^{\, -1} (V)$ satisfies the desired property.
\end{proof}

\begin{lemma}\label{lem:level_covering}
For each positive integer $N$, the natural covering map
\begin{linenomath}
 \begin{gather*}
     \phi_N: \cY_g (N) = \Gamma(N) \backslash \cH_g \lra \cY_g =  \Sp{g}(\bZ) \backslash \cH_g
 \end{gather*}
\end{linenomath}
can be continuously extended to $\overline{\phi_N}: \overline{\cY_g (N)} \lra \overline{\cY_g}$  on the Satake compactification. For every open neighborhood $U$ of the Satake boundary  $\overline{\cY_g (N)} \setminus \cY_g (N)$, there is an open neighborhood $V$ of the Satake boundary $\overline{\cY_g} \setminus \cY_g$, such that $\overline{\phi_N}^{\, -1} (V) \subseteq U$.
\end{lemma}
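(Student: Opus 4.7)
The plan is to build $\overline{\phi_N}$ by invoking the functoriality of the Satake compactification with respect to finite-index inclusions of arithmetic subgroups, and then to deduce the neighborhood statement from a compactness argument, together with the basic fact that $\overline{\phi_N}$ sends the boundary into the boundary.

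For the existence of the continuous extension, I would recall that $\overline{\cY_g(N)}$ and $\overline{\cY_g}$ are both built from the rational bordification $\cH_g^\ast$ of $\cH_g$, equipped with the Satake topology, by taking quotients under $\Gamma(N)$ and $\Sp{g}(\bZ)$, respectively. Since $\Gamma(N) \subseteq \Sp{g}(\bZ)$ is of finite index, the identity on $\cH_g^\ast$ descends to a well-defined, continuous, finite-to-one surjection $\overline{\phi_N}\colon \overline{\cY_g(N)} \lra \overline{\cY_g}$ that restricts to $\phi_N$ on the interior. Both compactifications are compact Hausdorff, so $\overline{\phi_N}$ is automatically proper; moreover, since $\Sp{g}(\bZ)$-equivalence classes of rational boundary components in $\cH_g^\ast \setminus \cH_g$ pull back to $\Gamma(N)$-equivalence classes of such components, the map $\overline{\phi_N}$ sends $\overline{\cY_g(N)} \setminus \cY_g(N)$ into $\overline{\cY_g} \setminus \cY_g$.

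For the second part, let $U$ be an open neighborhood of the Satake boundary $\partial := \overline{\cY_g(N)} \setminus \cY_g(N)$. Then $K := \overline{\cY_g(N)} \setminus U$ is closed in the compact space $\overline{\cY_g(N)}$, hence compact, and by construction $K \subseteq \cY_g(N)$. Its image $\overline{\phi_N}(K)$ is therefore a compact subset of $\cY_g$, and closed in the Hausdorff space $\overline{\cY_g}$. I would then set
\begin{gather*}
V := \overline{\cY_g} \setminus \overline{\phi_N}(K),
\end{gather*}
which is open; since $\overline{\phi_N}(K) \subseteq \cY_g$, it contains the Satake boundary $\overline{\cY_g} \setminus \cY_g$. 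By construction, $\overline{\phi_N}^{\,-1}(V)$ is disjoint from $K$, i.e.\ $\overline{\phi_N}^{\,-1}(V) \subseteq U$, as required.

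The only nontrivial ingredient is the verification that $\overline{\phi_N}$ is continuous and boundary-preserving, which is a standard consequence of the construction of the Satake compactification via $\Sp{g}(\bZ)$-equivalence of rational boundary components and does not require any input specific to $\Gamma(N)$ beyond its finite index in $\Sp{g}(\bZ)$. Once that is in place the neighborhood statement is a one-line compactness argument, so I do not anticipate a genuine obstacle here.
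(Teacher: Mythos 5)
Your proposal is correct, and the extension part (descending the identity on the rational bordification $\cH_g^\ast$, boundary mapping into boundary) is the same standard functoriality the paper leaves implicit by saying the first assertion is clear. For the neighborhood statement, however, you take a genuinely different route: the paper works inside the covering-space framework, setting $U' = U \cap \cY_g(N)$, viewing $\phi_N^{-1}\big(\phi_N(U')\big) \ra \phi_N(U')$ as a degree-$n$ covering with sheets $U_i'$, and taking $V' = \bigcap_{i=1}^{n} \phi_N(U' \cap U_i')$, $V = V' \cup (\overline{\cY_g} \setminus \cY_g)$; you instead observe that $K = \overline{\cY_g(N)} \setminus U$ is compact and contained in $\cY_g(N)$, so $\overline{\phi_N}(K)$ is compact, hence closed in the Hausdorff space $\overline{\cY_g}$, and $V = \overline{\cY_g} \setminus \overline{\phi_N}(K)$ does the job since $K \subseteq \overline{\phi_N}^{-1}\big(\overline{\phi_N}(K)\big)$. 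Your argument buys simplicity and robustness: it needs only continuity of $\overline{\phi_N}$, compactness of $\overline{\cY_g(N)}$, Hausdorffness of $\overline{\cY_g}$, and the fact that the interior maps into the interior — in particular it sidesteps the bookkeeping with globally labelled sheets (which is delicate when $\phi_N(U')$ is not connected or not simply connected) and makes no use of the finite covering degree at all. The paper's argument, by contrast, stays entirely within the finite-covering picture and does not invoke the compactification's compactness, but at the price of a sheet decomposition that requires more care to justify. Both establish the lemma; yours is the cleaner point-set argument.
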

\begin{proof}
Similar to Lemma~\ref{lem:congruence_covering_immersion}, the first assertion is clear. For the second one, since $\overline{\phi_N}$ maps the boundary to the boundary, it suffices to find an open neighborhood $V^\prime$ of the boundary in $\cY_g$ such that $\phi_N^{-1} (V^\prime) \subseteq U^\prime$ for $U^\prime = U \bigcap \cY_g (N)$.  Since $\phi_N$ is a covering map of finite degree $n = [\Sp{g} (\bZ) : \Gamma (N)]$, 
the restriction on the boundary neighborhood $\phi_N^{-1} \big( \phi_N (U^\prime) \big) \lra  \phi_N (U^\prime)$ is also a covering map of degree $n$.  Let $U_i^\prime$ be $i$-th sheet over $\phi_N (U^\prime)$ for $i = 1, 2, \ldots, n$. Then $V := V^\prime \bigcup (\overline{\cY_g} \setminus \cY_g)$ for $V^\prime := \bigcap_{i = 1}^{n} \phi_N (U^\prime \cap U^\prime_i)$ satisfies the desired property.
\end{proof}

\begin{lemma}\label{lem:Jan_Martin_top}
Assume $g \geq 2$, and let $D$ be a prime divisor on $\overline{\cY_g}$. Then $D$ intersects the Satake boundary $\overline{\cY_g} \setminus \cY_g$.
\end{lemma}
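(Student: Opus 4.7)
The approach is by contradiction, via an intersection-theoretic argument exploiting the ampleness of the Hodge line bundle on the projective Satake compactification.

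Suppose $D \subseteq \overline{\cY_g}$ is a prime divisor with $D \cap (\overline{\cY_g} \setminus \cY_g) = \emptyset$. By Baily--Borel, $\overline{\cY_g}$ is a normal projective variety, and the Hodge line bundle $\lambda$, whose sections are Siegel modular forms, is ample on it. The Satake compactification admits a stratification $\overline{\cY_g} = \cY_g \sqcup \cY_{g - 1} \sqcup \cdots \sqcup \cY_0$ with $\dim \cY_{g'} = g'(g' + 1)/2$. In particular, for $g \geq 2$, the closure $\overline{\cY_1}$ of $\cY_1 \sqcup \cY_0$ inside $\overline{\cY_g}$ is a projective curve lying entirely inside the Satake boundary $\overline{\cY_g} \setminus \cY_g$.

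Next, I invoke the classical fact that for $g \geq 2$, $\mathrm{Pic}(\overline{\cY_g}) \otimes \Q$ has rank one, generated by the ample class $[\lambda]$; this follows from Borel's theorem on the arithmetic cohomology of Siegel modular varieties, combined with the codimension $g \geq 2$ of the Satake boundary, which gives $\mathrm{Pic}(\overline{\cY_g}) \cong \mathrm{Pic}(\cY_g)$. Consequently, the nonzero effective class $[D]$ is of the form $c\,[\lambda]$ for some positive rational number $c$.

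Finally, I compute the intersection number $D \cdot \overline{\cY_1}$ in two different ways. On the one hand, ampleness of $\lambda$ yields $\lambda \cdot \overline{\cY_1} > 0$, so $D \cdot \overline{\cY_1} = c\,(\lambda \cdot \overline{\cY_1}) > 0$. On the other hand, $D \cap \overline{\cY_1} \subseteq D \cap (\overline{\cY_g} \setminus \cY_g) = \emptyset$, which forces $D \cdot \overline{\cY_1} = 0$. This contradiction completes the argument.

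The main obstacle is justifying the Picard rank one statement, which is the least elementary input. If one prefers a more direct route, one can instead pull $D$ back along a toroidal compactification $\pi : X_g \to \overline{\cY_g}$, where the toroidal boundary is a genuine codimension-one divisor. The codimension $\geq 2$ property of $\overline{\cY_g} \setminus \cY_g$ guarantees that $\pi^{-1}(D)$ remains a prime divisor disjoint from the toroidal boundary divisor on the smooth projective $X_g$, and one may then run an analogous effective-against-ample divisor intersection argument on $X_g$ to derive the contradiction.
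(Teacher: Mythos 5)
Your argument is correct and is essentially the same as the one the paper relies on: the paper's proof simply cites the second paragraph of Proposition~4.1 of \cite{MR3406827}, which is precisely this combination of codimension $\geq 2$ of the Satake boundary, the rank-one (rational) divisor class group of $\cY_g$ generated by the ample Hodge class, and an ample-versus-disjoint intersection contradiction. The only points worth tightening are the justification that $c > 0$ (pair the effective class $[D]=c[\lambda]$ with $\lambda^{\dim - 1}$) and that the statement about $\mathrm{Pic}$ should really be phrased for the divisor class group so that $D$ is seen to be $\Q$-Cartier before intersecting with the boundary curve $\overline{\cY_1}$.
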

\begin{proof}
The assertion is shown in the second paragraph of the proof of Proposition~4.1 of \cite{MR3406827},  with $D$ replaced by $D^\prime$.
\end{proof}

\begin{theorem}\label{crossing}
Assume $g \geq 2$, and let $D$ be a prime divisor on $X_g$. Let $U \subseteq X_g$ be an open neighborhood of the boundary $\partial Y_g$. Then $D \cap U$ is a nontrivial divisor on $U$.
\end{theorem}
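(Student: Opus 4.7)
\emph{Proof plan.} If $D \subseteq \partial Y_g$ then $D \subseteq U$ automatically, so I will assume $D$ is not contained in $\partial Y_g$ and set $D' := D \cap Y_g$, a prime divisor on $Y_g$. Since $X_g \setminus U$ is compact and disjoint from $\partial Y_g$, its image under $\pi$ is a compact subset of $Y_g$; setting $V_0 := \overline{Y_g} \setminus \pi(X_g \setminus U)$ yields an open neighborhood of the Satake boundary $\overline{Y_g} \setminus Y_g$ with $\pi^{-1}(V_0) \subseteq U$. Thus it will suffice to show that the closure $\overline{D'}$ in $\overline{Y_g}$ meets the Satake boundary: density of $D'$ in $\overline{D'}$ then produces a point of $D' \cap V_0$, which via $\pi|_{Y_g} = \id$ lies in $D \cap U$, and this intersection is automatically a divisor on $U$ because $D$ is irreducible and $U$ is open.

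To produce such a boundary limit I will transport the problem to the Siegel variety. By Lemma~\ref{lem:density_rationality} I choose $\ga \in \UGQ$ such that $\phi(\ga \cH_g)$ meets $D$ transversally at some $y_0 \in D'$, and let $N = N(\ga)$ as in Lemma~\ref{lem:congruence_covering_immersion}. Near any preimage $\tau_0 \in [\ga]^{-1}(y_0)$ the map $[\ga]$ is a local biholomorphism, and transversality makes $[\ga]^{-1}(D')$ contain a nonempty codimension-$1$ closed analytic subvariety. Let $\tilde D$ be an irreducible component through $\tau_0$, a prime divisor on $\cY_g(N)$. Pushing $\tilde D$ forward along the finite covering $\phi_N$, its image in $\cY_g$ is a closed analytic subset of the same dimension, and an irreducible component $D_0$ of its closure in $\overline{\cY_g}$ is a prime divisor.

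By Lemma~\ref{lem:Jan_Martin_top}, $D_0$ meets the Satake boundary of $\overline{\cY_g}$. I will lift a sequence in $\phi_N(\tilde D)$ approaching such a boundary point through the surjective finite map $\phi_N|_{\tilde D}$, and extract a subsequence convergent in the compact space $\overline{\cY_g(N)}$. The preimage statement of Lemma~\ref{lem:level_covering} in contrapositive form forces the limit to lie on the Satake boundary of $\overline{\cY_g(N)}$, hence in $\overline{\tilde D}$. Finally, applying $\overline{[\ga]}$ — and invoking Lemma~\ref{lem:congruence_covering_immersion}, whose preimage statement together with the Hausdorff property of $\overline{Y_g}$ forces $\overline{[\ga]}$ to send the Satake boundary of $\overline{\cY_g(N)}$ into the Satake boundary of $\overline{Y_g}$ — delivers the required point of $\overline{D'}$ on the Satake boundary.

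The main obstacle will be the dimension and irreducibility bookkeeping in the middle step: transversality in Lemma~\ref{lem:density_rationality} is precisely what guarantees that $[\ga]^{-1}(D')$ is genuinely of codimension $1$ rather than smaller, and one must verify that the closure in $\overline{\cY_g}$ of its pushforward by $\phi_N$ contains a prime divisor so that Lemma~\ref{lem:Jan_Martin_top} applies. The verification that $\overline{[\ga]}$ sends Satake boundary to Satake boundary is a minor but essential input not stated explicitly in Lemma~\ref{lem:congruence_covering_immersion}.
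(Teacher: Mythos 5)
Your proposal is correct and follows essentially the same route as the paper: reduce to the Satake compactification via $\pi$, use Lemma~\ref{lem:density_rationality} to choose a rational $\ga$ and transversality to produce a prime divisor on $\cY_g(N)$, push it down to $\overline{\cY_g}$ so that Lemma~\ref{lem:Jan_Martin_top} applies, and transfer the boundary information back through Lemmas~\ref{lem:level_covering} and~\ref{lem:congruence_covering_immersion}. The only difference is cosmetic bookkeeping at the end: you lift sequences and use compactness to exhibit an actual point of $\overline{D^\prime}$ on the Satake boundary and then use a single neighborhood $V_0$ with $\pi^{-1}(V_0) \subseteq U$, whereas the paper chains together compatible neighborhoods $U_1, U_2, U_3$ of the various boundaries; both are sound.
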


\begin{proof}
Recall the maps defined in this subsection:
\begin{gather*}
    X_g \overset{\pi} \longrightarrow 
    \overline{Y_g} \underset{\overline{[\ga]}} \longleftarrow
    \overline{\cY_g (N)} \overset{\overline{\phi_N}} \longrightarrow \overline{\cY_g}
\end{gather*}
for $\ga \in \UGQ$.
If $D$ is contained in the boundary $\partial Y_g = X_g \setminus Y_g$, the assertion is clear. Otherwise $D$ intersects $Y_g$, so the pushforward $D_1 := \pi_\ast (D) = \overline{\pi (D)}$ is a prime divisor on $\overline{Y_g}$, and $D_1^\prime := D_1 \cap Y_g$ is a prime divisor on  $Y_g$. 

By Lemma~\ref{lem:density_rationality}, there is $\ga \in \UGQ$, which we fix from now on, such that $\phi (\ga \cH_g)$ intersects $D_1^\prime$ transversally at some point. In particular, the intersection has codimension $1$ in $\phi (\ga \cH_g)$ at some point. Let $N := N (\ga)$, by Lemma~\ref{lem:congruence_covering_immersion}, $[\ga] : [\tau] \lmto \phi (\ga \tau)$ is a local homeomorphism onto the image. Moreover, since irreducible components are equidimensional, $\overline{[\ga]}^{-1} (D_1)$ contains a prime divisor $D_2$ on $\overline{\cY_g (N)}$. 

Since the Satake boundary has codimension at least $2$ for $g \geq 2$, $D_2$ must intersect $\cY_g (N)$. As $\phi_N$ is a covering map, hence a local homeomorphism, it follows that the pushforward $D_3 :=  \overline{\phi_N}_\ast (D_2)$ is a prime divisor on $\overline{\cY_g (N)}$. 

Finally we analyze the boundary intersection. Given an arbitrary open neighborhood $U \subseteq X_g$ of the toroidal boundary $\partial Y_g$, the open neighborhood $U_1 = \overline{Y_g} \setminus (X_g \setminus U) \subseteq \overline{Y_g}$ of the Satake boundary $\overline{Y_g} \setminus Y_g$ satisfies $\pi^{-1} (U_1) = U$. By Lemma~\ref{lem:congruence_covering_immersion}, there is an open neighborhood $U_2 \subseteq \overline{\cY_g (N)}$ of the boundary $\overline{\cY_g (N)} \setminus \cY_g (N)$ such that $\overline{[\ga]} (U_2) \subseteq U_1$. Finally, by Lemma~\ref{lem:level_covering}, there is an open neighborhood $U_3 \subseteq \overline{\cY_g}$ of the boundary $\overline{\cY_g} \setminus \cY_g$ such that $\pi^{-1} (U_3) \subseteq U_2$. By Lemma~\ref{lem:Jan_Martin_top}, $D_3$ intersects $U_3$, and it follows from the aforementioned choice of open neighborhoods that $D_2$ intersects $U_2$, $D_1$ intersects $U_1$, and $D$ intersects $U$, as desired.
\end{proof}

\begin{remark}\label{rmk:group_cohomology}
There is another approach to show Theorem~\ref{crossing} using results from stable cohomology, at least for large degree $g$. We sketch a proof here for reference. Following the strategy of proof of Proposition 4.1 in \cite{MR3406827}, it suffices to show the Picard group of $\overline{Y_g}$ has rank at most $1$. Since $\overline{Y_g} \setminus Y_g$ has codimension at least $2$ in $\overline{Y_g}$ for $g \geq 2$, we have $\mathrm{Pic} (\overline{Y_g}) = \mathrm{Pic} (Y_g)$. By the exponential sheaf sequence we have  ( \cite{hartshorne-1977} Appendix B) an exact sequence \begin{gather}\label{eq:exp_ext_seq}
    0 \lra H^1 (Y_g, \bZ) \lra H^1 (Y_g, \cO_{Y_g}) 
    \lra \mathrm{Pic} (Y_g) \lra H^2 (Y_g, \bZ) \lra \cdots
    \tx{.}
\end{gather}
Since $\bH_g$ is simply connected, $\pi_1 (Y_g) = 0$, hence $H^1 (Y_g, \cO_{Y_g}) = 0$ by general theory. By \eqref{eq:exp_ext_seq}, we only need to show $H^2 (Y_g, \bZ)$ has rank at most $1$. Applying results in \cite{borel-wallach-1980} (Chap. VII), this amounts to computing the rank for the degree-$2$ part in the graded ring $H^\ast (\UGZ, \bZ)$, which is canonically isomorphic to $H^\ast (A (\bH_g; \bZ)^{\UGZ} )$, where $A^q(M ; E)$ denotes the space of smooth $E$-valued differential $q$-forms on $M$ for $q \geq 0$. Alternatively, we can compute the lower degree parts of $H^\ast (\UGZ, \bZ)$ using Theorem 7.5, Table under 10.6 and Theorem 11.1 in \cite{MR387496}. In particular, we can apply these results to conclude Theorem~\ref{crossing} for sufficiently large $g$. For small $g$, one needs more explicit results of computing the second degree information in $H^\ast (A (\bH_g; \bZ)^{\UGZ} )$, which we skip here.
\end{remark}

\subsection{Local convergence at the boundary}
\label{lavender}
\begin{proposition}[(\cite{matsumura-1980}, Theorem 102)]
\label{Matsumura}
Let $k$ be a field of characteristic $0$, and $R$ be a regular ring containing $k$. Suppose that (1) for any maximal ideal $\frakm$ of $R$, the residue field $R/\frakm$ is algebraic over $k$ and $\mathrm{ht}\, \frakm = n$, and (2) there exist $D_1,\ldots, D_n \in \mathrm{Der}_k (R)$ and $x_1, \ldots, x_n \in R$ such that $D_i x_j = \delta_{ij}$. Then $R$ is excellent.
\end{proposition}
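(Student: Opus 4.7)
The plan is to verify the four defining conditions of excellence: Noetherian, universally catenary, G-ring, and J-2. The first is immediate from the regularity of $R$. Universal catenarity follows from regularity as well, since regular rings are Cohen--Macaulay and polynomial extensions of Cohen--Macaulay rings remain Cohen--Macaulay. The substance of the proof lies in establishing the G-ring property; J-2 then follows from regularity combined with the G-ring condition by standard criteria (Matsumura, \S33).

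To prove the G-ring property, I would analyze the completion $\widehat{R_\frakm}$ at each maximal ideal $\frakm$ and show it is isomorphic to a formal power series ring $(R/\frakm)[[T_1, \ldots, T_n]]$. The first step is to verify that $x_1 - x_1(\frakm), \ldots, x_n - x_n(\frakm)$ form a regular system of parameters for $R_\frakm$. The duality $D_i x_j = \delta_{ij}$ forces their classes in $\frakm R_\frakm / \frakm^2 R_\frakm$ to be linearly independent over $R/\frakm$: any relation $\sum a_i (x_i - x_i(\frakm)) \in \frakm^2 R_\frakm$ with lifts $a_i \in R$ would, upon applying $D_j$ and reducing modulo $\frakm$, yield $a_j \in \frakm$. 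Combined with $\mathrm{ht}\, \frakm = n$, regularity of $R_\frakm$ produces the required regular system of parameters.

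The second step is to construct the isomorphism $\widehat{R_\frakm} \cong (R/\frakm)[[T_1, \ldots, T_n]]$. Because $R/\frakm$ is algebraic over the characteristic-zero field $k$, it is separable over $k$, and Cohen's structure theorem together with Hensel's lemma applied to minimal polynomials of generators of $R/\frakm$ produces a coefficient field $L \cong R/\frakm$ inside $\widehat{R_\frakm}$. The derivations $D_i$ extend continuously to the completion, and the formal Taylor map
\begin{gather*}
\Phi: \widehat{R_\frakm} \lra L[[T_1, \ldots, T_n]], \qquad a \lmto \sum_{\alpha \in \N^n} \tfrac{1}{\alpha!}\,(D^\alpha a)(\frakm)\, T^\alpha
\end{gather*}
becomes a ring isomorphism with inverse $T_i \lmto x_i - x_i(\frakm)$. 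Since formal power series rings over fields are complete local rings whose formal fibers over their localizations are geometrically regular, the same holds for $R_\frakm$. A local-global descent for G-rings then promotes geometrically regular formal fibers at maximal ideals to all primes of $R$.

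The main obstacle I anticipate is the construction of the coefficient field $L$ together with the rigorous verification that $\Phi$ is a well-defined ring isomorphism; this is precisely where characteristic zero, the algebraicity of the residue field, and the existence of a dual basis of derivations all combine. A minor subtlety is that the $D_i$ need not commute a priori, so one may first pass to commuting derivations constructed from the given data or argue by induction on $n$; this is subordinate to the main argument. Once $\widehat{R_\frakm}$ is identified with a formal power series ring, the remaining checks that assemble into excellence are largely bookkeeping.
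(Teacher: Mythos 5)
The paper offers no proof of this proposition: it is quoted from \cite{matsumura-1980}, Theorem~102, so your sketch has to be measured against the standard proof there, and measured that way it has a genuine gap at its core. For the G\nbd property you establish (only) an isomorphism $\widehat{R_\frakm}\cong (R/\frakm)[[T_1,\ldots,T_n]]$. But this isomorphism holds for \emph{every} equicharacteristic-zero regular local ring with residue field algebraic over $k$, by Cohen's structure theorem, with or without the derivations $D_i$; and it does not imply that the formal fibres of $R_\frakm\to\widehat{R_\frakm}$ are geometrically regular. The G\nbd condition concerns the fibres $\widehat{R_\frakm}\otimes_{R_\frakm}\kappa(\frakp)$ over \emph{all} primes $\frakp\subseteq\frakm$; the only fibre your computation controls is the closed one, which is trivially a field, and the sentence ``since formal power series rings \ldots the same holds for $R_\frakm$'' is a non sequitur (that $\widehat{R_\frakm}$ has geometrically regular formal fibres over itself says nothing about the map $R_\frakm\to\widehat{R_\frakm}$). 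Indeed, constructions of Rotthaus and of Heitmann--Loepp type produce Noetherian local domains containing $\Q$ whose completion is $\Q[[T_1,\ldots,T_n]]$ but whose formal fibres are not geometrically regular; such rings are regular (their completions are) and satisfy every intermediate conclusion of your argument, yet are not G\nbd rings. Equivalently: your proof uses hypothesis (2) only to produce a regular system of parameters and a Taylor expansion, data these counterexamples already possess, so it cannot be correct. The actual content of Matsumura's Theorems 100--102 is that the $D_i$, extended continuously to $\widehat{R_\frakm}$, furnish a derivation/Jacobian criterion for regularity, valid in characteristic zero, which is applied to the local rings of $\widehat{R_\frakm}\otimes_{R_\frakm}\kappa'$ for every prime $\frakp$ and every finite extension $\kappa'$ of $\kappa(\frakp)$; that step is entirely missing from your proposal.

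Two secondary points. The reduction you invoke at the end is a correct standard fact, but it requires that the whole morphism $R_\frakm\to\widehat{R_\frakm}$ be regular for each maximal $\frakm$ (all of its fibres geometrically regular); your phrase ``geometrically regular formal fibers at maximal ideals'' conflates this with the closed fibre, which is all you compute. Likewise, J-2 is not a formal consequence of ``regular plus G\nbd ring'': the G\nbd property alone does not give openness of the regular loci of finite $R$-algebras, and in the proof in \cite{matsumura-1980} this openness is again extracted from hypothesis (2) via the characteristic-zero Jacobian criterion applied to quotients of $R$, not from the G\nbd property. The unproblematic parts of your sketch are the Noetherian and universally catenary conditions and, after the harmless repair of interpreting $x_i-x_i(\frakm)$ inside $\widehat{R_\frakm}$ by means of a coefficient field (on which the extended derivations vanish because the residue field is algebraic over $k$ and the characteristic is zero), the verification that these elements form a regular system of parameters.
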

\begin{remark}
As pointed out in the remark after Theorem 102 in the book, convergent power series rings over $\C$ are examples of regular rings to which the theorem applies. 
\end{remark}

\begin{proposition}[(\cite{MR3406827}, Proposition 4.2)]
\label{diamond}
Let $A$ be a local integral domain with maximal ideal $\frakm$, and let $\hat{A}$ be the completion of $A$ with respect to $\frakm$. If $A$ is henselian and excellent, then $A$ is algebraically closed in $\hat{A}$.
\end{proposition}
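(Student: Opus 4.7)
The plan is to reduce Proposition~\ref{diamond} to Artin's approximation theorem, which applies precisely to the class of rings in the hypothesis, namely excellent henselian (Noetherian) local rings. Suppose $\alpha \in \hat{A}$ is algebraic over $A$, witnessed by a nonzero polynomial $P(X) \in A[X]$ with $P(\alpha) = 0$. I aim to produce an element $\beta \in A$ equal to $\alpha$ by combining approximation with a finiteness argument and the Krull intersection theorem.

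First, I would invoke Artin's approximation theorem: since $A$ is excellent and henselian and the equation $P(X) = 0$ admits the solution $\alpha$ in $\hat{A}$, for every positive integer $N$ there exists $\beta_N \in A$ with $P(\beta_N) = 0$ and $\beta_N \equiv \alpha \pmod{\frakm^N \hat{A}}$. Second, I would exploit that $A$ is an integral domain: the polynomial $P$ has at most $\deg P$ roots in $A$, so the sequence $(\beta_N)_{N \geq 1}$ attains only finitely many distinct values. By the pigeonhole principle, some $\beta \in A$ equals $\beta_{N_k}$ for an increasing sequence $N_k \to \infty$, whence $\beta - \alpha \in \frakm^{N_k} \hat{A}$ for all $k$. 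Finally, since $\hat{A}$ is a Noetherian local ring with maximal ideal $\frakm \hat{A}$, the Krull intersection theorem gives $\bigcap_{N} \frakm^N \hat{A} = 0$, forcing $\beta = \alpha$ and thus $\alpha \in A$.

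The main obstacle is the invocation of Artin's approximation theorem itself, which is a substantial result: in its most general form applicable here, it rests on Popescu's smoothing theorem to the effect that any regular morphism of Noetherian rings is a filtered colimit of smooth morphisms. The excellence hypothesis on $A$ enters precisely at this point, ensuring that the completion map $A \to \hat{A}$ is regular, while the henselian hypothesis is what permits formal solutions modulo higher powers of $\frakm$ to be lifted back to genuine solutions in $A$. Once Artin approximation is granted, the subsequent pigeonhole and Krull intersection steps are entirely elementary. A conceptual alternative would be to identify the algebraic closure of $A$ in $\hat{A}$ with the henselization $A^h$ (which coincides with $A$ by hypothesis), but this identification itself ultimately relies on the same approximation machinery, so the approach above seems the most direct.
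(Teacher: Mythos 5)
Your proof is correct: Artin--Popescu approximation applies exactly under the stated hypotheses (excellent henselian local, hence Noetherian), and the pigeonhole argument on the finitely many roots of a nonzero polynomial over the domain $A$ together with Krull intersection in $\hat{A}$ does force $\alpha \in A$. The paper itself does not prove this proposition but imports it from \cite{MR3406827}, Proposition 4.2, and your argument is essentially the standard one given there, so there is nothing further to reconcile.
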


\begin{proposition}\label{wind}
Let $f \in \FMA$ be a symmetric formal Fourier--Jacobi series of cogenus $1$. If $f$ is algebraic over the graded algebra $\MG$, then there is an open neighborhood $U$ of the boundary $\partial Y_g = X_g \setminus Y_g$ such that $f$ converges absolutely and defines a holomorphic function on $U$.
\end{proposition}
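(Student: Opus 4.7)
The plan is to identify the formal Fourier--Jacobi expansion of $f$ with an element of the formal completion of the analytic local ring at a smooth point $p$ of the toroidal boundary $\partial Y_g$, and then invoke Propositions~\ref{Matsumura} and~\ref{diamond} to upgrade the algebraic relation over $\MG$ into genuine analytic convergence in a neighborhood of $p$.

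First, I would work locally near a smooth point $p \in \partial Y_g$ lying over the cogenus-$1$ cusp. The toroidal construction of \cite{ash-mumford-rapoport-tai-1975} produces, near $p$, an analytic chart in which the direction transverse to the boundary is parametrized by $q_2 = e(\tau_2)$ for the last diagonal variable $\tau_2 \in \bH_1$, while the remaining coordinates are local coordinates on the Hermitian--Jacobi space $\bH_{g - 1, 1}$ (modulo a suitable discrete action that can be trivialized by passing to an orbifold cover at smooth points of $\partial Y_g$). After trivializing the weight-$k$ automorphic line bundle in this chart, the Fourier--Jacobi expansion
\[
    f(\tau) = \sum_{m \in \Z_{\geq 0}} \phi_m(\tau_1, w, z)\, q_2^m
\]
becomes the formal Taylor expansion along the divisor $\{q_2 = 0\}$ of a would-be section; since every Jacobi coefficient $\phi_m$ is holomorphic on $\bH_{g - 1, 1}$, this expansion defines a bona fide element of $\wht{\cO}_{X_g, p}$, the completion of the analytic local ring $\cO_{X_g, p}^{\mathrm{an}}$.

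Second, by the Koecher principle together with the very definition of the toroidal compactification, every $h \in \MG$ extends holomorphically across $\partial Y_g$, and so after the same local trivialization defines a germ in $\cO_{X_g, p}^{\mathrm{an}} \cong \C\{x_1, \ldots, x_n\}$. The hypothesis that $f$ is algebraic over $\MG$ then yields a monic polynomial relation $P(f) = 0$ whose coefficients, in this chart, are germs of convergent functions at $p$. Now $\cO_{X_g, p}^{\mathrm{an}}$ is a regular local $\C$-algebra; it is henselian (a standard property of analytic local rings) and, by Proposition~\ref{Matsumura}, excellent. Proposition~\ref{diamond} then asserts that $\cO_{X_g, p}^{\mathrm{an}}$ is algebraically closed in $\wht{\cO}_{X_g, p}$. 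Since $f \in \wht{\cO}_{X_g, p}$ satisfies the polynomial $P$ with coefficients in $\cO_{X_g, p}^{\mathrm{an}}$, we conclude $f \in \cO_{X_g, p}^{\mathrm{an}}$, i.e., $f$ converges absolutely on some open neighborhood $U_p$ of $p$.

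Finally, I would take $U$ to be the union of the $U_p$ as $p$ ranges over an open cover of $\partial Y_g$ by smooth boundary points, handling the orbifold strata by first applying the argument upstairs on a smooth local cover and then descending. The main obstacle I anticipate is the first step: one must verify carefully that the passage from the $(\tau_1, w, z, \tau_2)$-description of $f$ to the toroidal chart, together with the chosen trivialization of the weight-$k$ line bundle, genuinely produces an element of $\wht{\cO}_{X_g, p}$, and that the algebraic relation over $\MG$ descends to a polynomial relation of the same degree in $\cO_{X_g, p}^{\mathrm{an}}$. Once this bridge between the formal Fourier--Jacobi side and the formal completion of the toroidal chart is in place, the structural results of Matsumura and of Bruinier--Raum recorded as Propositions~\ref{Matsumura} and~\ref{diamond} do the rest.
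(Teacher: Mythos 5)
Your proposal is correct and follows essentially the same route as the paper: identify $f$ with an element of the completion $\wht{\cO}_{X_g,x}$ of the analytic local ring at a toroidal boundary point, note that the coefficients of the algebraic relation over $\MG$ give germs in $\cO_{X_g,x}$, use that this local ring is henselian and (by Proposition~\ref{Matsumura}) excellent, and conclude convergence near $x$ via Proposition~\ref{diamond}, then vary the boundary point. The only cosmetic differences are that the paper works at arbitrary boundary points rather than restricting to smooth ones, and it does not need the relation to be monic, since Proposition~\ref{diamond} yields algebraic closedness of $\cO_{X_g,x}$ in its completion.
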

\begin{proof}
This is the Hermitian counterpart of Lemma 4.3 in \cite{bruinier-raum-2015}, and we follow the proof there. Let $x \in \partial Y_g$ be a toroidal boundary point, and let $Q \in \MG [X]$ be a polynomial  such that $Q (f) = 0$. Consider germs of holomorphic functions at the point $x$, and we write $\cO$ for the structure sheaf $\cO_{X_g}$. Note that the polynomial $Q$ defines a polynomial $Q_x \in \cO_x [X]$ and the symmetric formal Fourier--Jacobi series $f$ defines an element $f_x \in \hat{\cO}_x$ in the completion of the local ring $\cO_x$, such that $Q_x (f_x) = 0$. By general theory of toroidal compactifications and the Weierstrass preparation theorem, the local ring $\cO_x$ is henselian. On the other hand, Proposition~\ref{Matsumura} implies that the ring $\cO_x$ is excellent. Therefore, by Proposition~\ref{diamond} and the assumption that $f_x$ is algebraic over $\cO_x$, we conclude that $f$ converges absolutely in a neighborhood of $x$. Varying the boundary point $x$, we find $f$ converges in an open neighborhood of the whole toroidal boundary $\partial Y_g = X_g \setminus Y_g$.
\end{proof}

\subsection{Analytic continuation and algebraic closedness of $\MG$ in $\FMG$}
\label{irises}
\begin{proposition}[(\cite{MR3406827})]\label{wolf}
Let $N$ be a positive integer and $W \subseteq \C^N$ be a domain. Let $P(\tau, X)\\ \in \cO (W) [X]$ be a monic irreducible polynomial with discriminant $\Delta_P (\in \cO(W))$. Let $V \subseteq W$ be an open subset that intersects every irreducible component of the divisor $D = \mathrm{div} (\Delta_P)$. If $f$ is an analytic function on $V$ such that $P (\tau, f(\tau)) = 0$ on $V$, then $f$ has an analytic continuation to the entire domain $W$.
\end{proposition}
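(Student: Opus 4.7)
The strategy is to first continue $f$ to a single-valued holomorphic function on $W \setminus D$, and then remove the singularities along $D$ by boundedness. Write $d = \deg_X P$. Off $D$ the discriminant does not vanish, so for every $\tau \in W \setminus D$ the polynomial $P(\tau, X)$ has $d$ distinct roots; by the implicit function theorem, the projection $Z^\circ := \{(\tau, x) \in (W \setminus D) \times \C : P(\tau, x) = 0\} \to W \setminus D$ is an unramified covering of degree $d$, and $f$ picks out a section of it over (each connected component of) $V \setminus D$. Moreover, because $P$ is monic with holomorphic coefficients, any root satisfies a uniform estimate in terms of those coefficients, so every local branch of the covering---in particular $f$---is locally bounded on $W$.

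\emph{Single-valued continuation.} The heart of the argument is to show that the branch of $f$ is preserved under all monodromy of the covering $Z^\circ \to W \setminus D$. Let $D_1, \ldots, D_s$ be the irreducible components of $D$. By hypothesis, $V \cap D_i \neq \emptyset$ for every $i$; since the singular locus of $D_i$ and the intersections $D_i \cap D_j$ for $j \neq i$ are proper analytic subsets of $D_i$, I would choose a smooth point $q_i \in V \cap D_i$ lying on no other component. On a suitable open neighborhood $U_i \subseteq V$ of $q_i$, the divisor $D_i$ is cut out by a single holomorphic equation and $f$ is holomorphic, which forces the monodromy of a small meridian loop around $D_i$ based inside $U_i \setminus D$ to fix the branch determined by $f$. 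Next, fix a base point $x_0 \in V \setminus D$ that is connected to each $q_i$ by a path in $W \setminus D$. A standard fact states that $\pi_1(W \setminus D, x_0)$ is generated by conjugates of meridian loops around the $D_i$; combined with the local triviality just obtained, this shows that every loop at $x_0$ preserves the branch of $f$. Hence $f$ extends to a single-valued holomorphic function $\tilde f$ on $W \setminus D$ satisfying $P(\tau, \tilde f(\tau)) = 0$.

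\emph{Removability and main obstacle.} Since $\tilde f$ is a root of a monic polynomial with holomorphic coefficients on $W$, it is locally bounded there. Because $D$ is an analytic subset of codimension one in the complex manifold $W$, Riemann's second extension theorem provides a holomorphic extension of $\tilde f$ across $D$ to all of $W$, and by continuity this extension still satisfies the polynomial equation. The principal obstacle is the single-valuedness step: translating the pointwise holomorphy of $f$ near each $V \cap D_i$ into a statement about all of $\pi_1(W \setminus D)$ requires precisely the hypothesis that $V$ meets every component $D_i$, for otherwise monodromy around an unmet component could permute the branches nontrivially and no consistent continuation would exist.
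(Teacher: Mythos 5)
There is no proof to compare against here: the paper imports this proposition verbatim from \cite{MR3406827} and never argues it, so your attempt stands or falls on its own. It falls at the single-valuedness step. Knowing that one meridian of $D_i$, based inside $U_i$ and joined to $x_0$ by one particular path, fixes the branch of $f$ gives you only one element of the stabilizer of that branch; the stabilizer of a point of the fibre under the monodromy action is in general not a normal subgroup, so a conjugate $g\mu_i g^{-1}$ fixes the branch obtained by transporting $f$'s germ along $g$, not necessarily the branch of $f$ itself (conjugate permutations merely share a cycle type — having \emph{a} fixed point is far from acting trivially). In addition, the generation fact you invoke, that $\pi_1(W\setminus D, x_0)$ is generated by conjugates of meridians of the $D_i$, holds only when $\pi_1(W)$ is trivial; for a general domain $W$ (which is all the statement assumes) these conjugates generate only the kernel of $\pi_1(W\setminus D)\to\pi_1(W)$. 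A further warning sign: if your intermediate claim were available, then boundedness of roots, Riemann extension across $D$ and division of $P$ by $X-\tilde f$ would make $P$ reducible whenever $\deg_X P\geq 2$, so the argument could only ever succeed in the trivial case.

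The gap is not repairable by better bookkeeping, because the statement as transcribed is actually false, and the failure occurs exactly at your conjugation step. Take $W=\C^2$ with coordinates $(s,t)$ and $P(s,t,X)=X^3+sX+t$. This is monic and irreducible over $\cO(\C^2)$ (a monic linear factor would be a global single-valued root, which is impossible since the monodromy on the roots around the discriminant locus is the full symmetric group $S_3$); its discriminant is $-4s^3-27t^2$, whose divisor is the single irreducible curve $\{4s^3+27t^2=0\}$. At a smooth point of this curve the fibre consists of a double root and a simple root, and the simple root is holomorphic on a small ball $V$ around that point by the implicit function theorem. All hypotheses of the statement hold, the meridian based in $V$ does fix the branch of $f$ (your step 3 is correct there), yet $f$ admits no analytic continuation to $\C^2$: some conjugate meridian moves the branch. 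So the proposition needs hypotheses beyond those recorded here — in the source and in the paper's application one has much more, namely $W=\Hg$ contractible and $V$ the preimage of a neighborhood of the entire toroidal boundary, which by Theorem~\ref{crossing} meets every prime divisor, not only the components of $\mathrm{div}(\Delta_P)$ (this is how Proposition~\ref{institute} uses it) — and any correct proof must exploit that stronger input rather than the local triviality of one meridian per component. (Minor point: the removability step you want is Riemann's first extension theorem, locally bounded across a codimension-one analytic set; the second one concerns codimension at least two.)
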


\begin{proposition}\label{institute}
Let $g \geq 2$ be an integer.
If $f \in \FMA$ is integral over the graded algebra $\MG$, then $f \in \MA$.
\end{proposition}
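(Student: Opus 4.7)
The plan is a two-step analytic continuation argument paralleling the Siegel case of \cite{bruinier-raum-2015}: first use Proposition~\ref{wind} to obtain absolute convergence in a neighborhood of the toroidal boundary, then apply Proposition~\ref{wolf} in tandem with Theorem~\ref{crossing} to push this holomorphic function across the interior, and finally bootstrap the formal modularity of $f$ into an analytic modular transformation law. Concretely, since $f$ is integral, hence algebraic, over $\MG$, Proposition~\ref{wind} yields an open neighborhood $U \subseteq X_g$ of $\partial Y_g$ such that $f$ converges absolutely and defines a holomorphic function on the $\UGZ$-invariant set $V := \phi^{-1}(U \cap Y_g) \subseteq \bH_g$, where $\phi \colon \bH_g \lra Y_g$ is the quotient map.

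Next, I would pick a monic polynomial $P(X) = X^n + \sum_{i=1}^n a_i X^{n-i} \in \MG[X]$ with $P(f) = 0$, and use the graded structure of $\FMG$ over $\MG$ to extract the weight-$nk$ homogeneous component, reducing to $a_i \in M_{ik}^{(g)}$. Passing to the minimal polynomial of $f$ over the fraction field of $\MG$, the discriminant $\Delta_P \in \MG$ becomes a nonzero Hermitian modular form. The key geometric step is to verify that $V$ intersects every irreducible component $D_i$ of the discriminant divisor $D := \mathrm{div}(\Delta_P) \subseteq \bH_g$: the image $\phi(D_i)$ is a prime divisor on $Y_g$ whose closure in $X_g$ is a prime divisor not contained in $\partial Y_g$, so Theorem~\ref{crossing} applied to this closure, combined with the density of $\phi(D_i)$ in its closure and the $\UGZ$-invariance of $V$, forces $V \cap D_i \neq \emptyset$. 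Proposition~\ref{wolf} then produces an analytic continuation $\tilde f$ of $f$ to all of $\bH_g$ with $P(\tau, \tilde f(\tau)) = 0$.

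Finally, for $\gamma \in \UGZ$, the weighted homogeneity of $P$ ensures that $g_\gamma(\tau) := j(\gamma, \tau)^{-k} \tilde f(\gamma \tau)$ is another holomorphic solution of $P(\tau, X) = 0$ on $\bH_g$. On $V$, the formal $\UGZ$-modularity of $f$---guaranteed by Lemma~\ref{toy} combined with the defining Jacobi-group and $\GL{g}(\cO_E)$-invariances of a symmetric formal Fourier--Jacobi series---promotes to an analytic identity $g_\gamma = \tilde f$, which then extends globally by the identity theorem on the connected domain $\bH_g$. Koecher's principle for $g \geq 2$ handles cusp holomorphicity, giving $\tilde f \in \MA$, and since $\tilde f$ and $f$ agree as formal Fourier--Jacobi series we conclude $f \in \MA$. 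The main obstacle I anticipate is the divisor-bookkeeping in the second paragraph: carefully relating irreducible components of $D$ on $\bH_g$ to prime divisors on $X_g$, and verifying that irreducibility of $P$ over $\mathrm{Frac}(\MG)$ (rather than over $\cO(\bH_g)$) suffices for the hypotheses of Proposition~\ref{wolf}.
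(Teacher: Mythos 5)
Your proposal is correct and follows essentially the same route as the paper: Proposition~\ref{wind} for convergence near the toroidal boundary, pure-weight coefficients and quasi-homogeneity to make $\Delta_P$ a nonzero Hermitian modular form, Theorem~\ref{crossing} together with Proposition~\ref{wolf} for the analytic continuation, and Lemma~\ref{toy} with the formal invariances to recover modularity. The extra bookkeeping you add (prime-divisor matching on $X_g$, the identity-theorem argument for the transformation law) only spells out steps the paper leaves implicit.
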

\begin{proof}
Let $P(\tau, X) \in \MG [X]$ be a monic irreducible polynomial over the graded algebra $\MG$ with coefficients of pure weights, such that $P (\tau, f(\tau)) = 0$ formally. By Proposition~\ref{wind}, there is an open neighborhood $U$ of the boundary $\partial Y_g = X_g \setminus Y_g$ such that $f$ converges absolutely and defines a holomorphic function on the preimage $V \subseteq \bH_g$ of $U$,  say $f|_V$.  By assumption, $f|_V$ satisfies $P (\tau, f|_V(\tau)) = 0$ on $V$.  Futhermore, the weights of the coefficients of $P$ form an arithmetic progression, hence $\Delta_P$ is a nonzero Hermitian modular form by the quasi-homogeneity of the discriminant. In particular,  $\mathrm{div} (\Delta_P)$ defines a divisor on $X_g$, hence every irreducible component of $\mathrm{div} (\Delta_P)$ intersects $U$ by Theorem~\ref{crossing}. Consequently, $f|_V$ has an analytic continuation $f$ to $\bH_g$ by Proposition~\ref{wolf}. The modularity of $f$ follows from the assumption that $f \in \FMA$, and Lemma~\ref{toy}.
\end{proof}

\begin{theorem}\label{ulysses}
For every integer $g \geq 2$, the graded algebra $\MG$ is algebraically closed in the graded algebra $\FMG$.
\end{theorem}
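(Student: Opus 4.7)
The plan is to deduce Theorem~\ref{ulysses} from Proposition~\ref{institute} by a graded-algebra reduction. Proposition~\ref{institute} already establishes the pure-weight version: any $f \in \FMA$ integral over $\MG$ belongs to $\MA$. What is left is to pass from integrality in the full graded algebra $\FMG = \bigoplus_k \FMA$ to integrality of each of its homogeneous components.

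To this end, I would take $f \in \FMG$ integral over $\MG$ and decompose $f = \sum_k f_k$ into its finitely many nonzero homogeneous pieces $f_k \in \FMA$. From a monic relation $f^n + a_{n-1} f^{n-1} + \cdots + a_0 = 0$ with $a_i \in \MG$, I would further decompose each $a_i$ into its pure weight components and balance total weights on both sides. This is the familiar graded-ring principle that the integral closure of a $\bZ$-graded subring inside a $\bZ$-graded ring is itself graded: one identifies the top-weight homogeneous component $f_{k_0}$ of $f$, reads off a monic relation over $\MG$ that witnesses its integrality by comparing top-weight contributions on both sides, and then observes that $f - f_{k_0}$ is still integral over $\MG$ since integral elements form a subring. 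Iterating peels off every $f_k$ in turn and shows each is integral over $\MG$.

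Proposition~\ref{institute} then delivers $f_k \in \MA$ for every $k$, and summing gives $f \in \MG$, which is the claimed algebraic closedness. The main obstacle is really just the bookkeeping in the graded peeling-off step, since $\FMG$ is not visibly a graded integral domain; but the top-weight comparison goes through because the top-weight component of $f^n$ is genuinely $f_{k_0}^n$ once one tracks that the $\FMA$ assemble into an $\MG$-module compatibly with the weight grading. All of the substantive analytic and geometric work — local convergence near the toroidal boundary (Proposition~\ref{wind}), crossing of prime divisors with every boundary neighborhood (Theorem~\ref{crossing}), analytic continuation across the remaining divisors (Proposition~\ref{wolf}), and the final modularity via Lemma~\ref{toy} — is already packaged into Proposition~\ref{institute}, so at this stage the remaining step is purely algebraic.
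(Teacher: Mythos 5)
There is a genuine gap: you have proved that $\MG$ is \emph{integrally} closed in $\FMG$, but the theorem asserts that it is \emph{algebraically} closed, and the two are not the same here because $\MG$ is not a field. Proposition~\ref{institute} only applies to elements satisfying a \emph{monic} relation, whereas the input this theorem must handle is a homogeneous $f \in \FMA$ satisfying some relation $\sum_{i=0}^{n} a_i f^i = 0$ with $a_i \in \MG$ not all zero and $a_n$ not invertible. This stronger statement is exactly what is needed downstream: Theorem~\ref{lolita} only furnishes a finite-rank (hence non-monic) relation for $f$ over $\MG$, so an integral-closedness statement would not suffice to conclude $\FMG = \MG$ in Theorem~\ref{tess}. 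Your claim that ``all of the substantive analytic and geometric work is already packaged into Proposition~\ref{institute}'' is therefore not correct at this stage.

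The paper's proof does begin with the same graded peeling-off of homogeneous components, but then it must do more. Given homogeneous $f$ with $hf$ integral over $\MG$ for some nonzero $h \in \MG$ (obtained essentially by clearing the leading coefficient), Proposition~\ref{institute} gives $hf \in \MG$, so $f = (hf)/h$ is a priori only a \emph{meromorphic} function on $\bH_g$. One then has to exclude poles: Proposition~\ref{wind} shows $f$ converges and is holomorphic on the preimage of a neighborhood $U$ of the toroidal boundary, and if the polar divisor of $f$ were nonempty, its image in $X_g$ would be a divisor missing $U$, contradicting Theorem~\ref{crossing}. Only then is $f$ holomorphic on all of $\bH_g$, with modularity coming from the formal symmetry and Lemma~\ref{toy}. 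So the geometric input (Theorem~\ref{crossing}) is invoked a second time, directly in the proof of this theorem, and cannot be absorbed into the integral case; your purely algebraic reduction cannot bridge from ``$hf$ is a modular form'' to ``$f$ is a modular form''. (Your peeling-off step itself is fine for monic relations and matches the paper's first paragraph; the missing content is the passage from algebraic to integral and the subsequent pole-exclusion argument.)
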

\begin{proof}
First we show that if a finite sum $f = \sum_{k} f_k \in \FMG$ for $f_k \in \FMA$ is algebraic over $\MG$, then so is each $f_k$. Indeed, as $f$ satisfies a polynomial equation over $\MG$, the highest weight piece $f_{k_0}$ of $f$ must satisfy the equation defined by the highest weight part, so $f_{k_0}$ is also algebraic over $\MG$, and we proceed by induction. We assume now that $f \in \FMA$ and $h f$ is integral over $\MG$ for some Hermitian modular form $h$.  By Proposition~\ref{institute}, $h f$ defines a Hermitian modular form.  In particular, $f$ defines a meromorphic function on $\bH_g$.

Furthermore, Proposition~\ref{wind} implies that $f$ is holomorphic on $\pi^{-1} (U) \subseteq \bH_g$ for some open neighborhood $U$ of $\partial Y_g$ under the natural map $\pi: \bH_g \lra X_g$. If the polar set of $f$ is nonempty, then the pushforward of it under $\pi$ does not intersect $U$, which is a contradiction by Theorem~\ref{crossing}. Hence $f$ is holomorphic on $\bH_g$, and defines a Hermitian modular form, as desired.
\end{proof}

\section{Modularity of symmetric formal Fourier--Jacobi series}
\label{jump}
In this section, we establish the main result, Theorem~\ref{heaven}, via a few steps of reduction based on the machinery introduced in Section~\ref{walk}. Let $E/\Q$ be a norm-Euclidean imaginary quadratic fields, over which the Hermitian modular forms are defined. 
\begin{theorem}\label{tess}
Let $g \geq 2$ be an integer, then
\begin{linenomath}
\begin{gather*}
    \FMG = \MG
    \tx{.}
\end{gather*}
\end{linenomath}
\end{theorem}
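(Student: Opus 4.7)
The plan is to obtain Theorem~\ref{tess} as a formal consequence of the two main structural results established in Sections~\ref{run} and~\ref{fly}. One inclusion $\MG \subseteq \FMG$ is tautological: every Hermitian modular form $f \in \MA$ admits a Fourier--Jacobi expansion of cogenus $1$ of the form \eqref{apricot}, whose Fourier--Jacobi coefficients $\phi_m$ are genuine Hermitian Jacobi forms in $\JB$, and whose Fourier coefficients satisfy the $\GL{g}(\OK)$-symmetry condition \eqref{zhou} coming from the modularity of $f$. Hence $f$ is a symmetric formal Fourier--Jacobi series in $\FMA$.

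For the reverse inclusion, let $f \in \FMA$ be a symmetric formal Fourier--Jacobi series of arbitrary weight $k$. By Theorem~\ref{lolita}, the graded algebra $\FMG$ is an algebraic extension of $\MG$; in particular, $f$ satisfies some nonzero polynomial equation with coefficients in $\MG$. By Theorem~\ref{ulysses}, the graded algebra $\MG$ is algebraically closed inside $\FMG$, so every element of $\FMG$ that is algebraic over $\MG$ already lies in $\MG$. Combining the two, $f \in \MA$, which yields $\FMG \subseteq \MG$ and hence the desired equality.

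There is no substantial obstacle at this stage: all the technical difficulty has been discharged in the previous two sections. The norm-Euclidean hypothesis on $E$ propagates from Theorem~\ref{lolita} (where it is essential for the slope bound $\omega_g \geq 12 c_E^{g-1} > 0$ in Corollary~\ref{field} and hence for the dimension comparison in Section~\ref{asymptotics}); the toroidal-boundary and analytic-continuation input has been absorbed into Theorem~\ref{ulysses} via Proposition~\ref{wind}, Theorem~\ref{crossing}, and Proposition~\ref{wolf}. What remains in Section~\ref{jump} beyond Theorem~\ref{tess} is to bootstrap from the cogenus-$1$, trivial-type case to arbitrary cogenus $l$ and arbitrary arithmetic type $\rho$; this next step would proceed inductively on $l$, using the formal theta decomposition (Proposition~\ref{boutique}) to reduce the cogenus-$l$ case to a cogenus-$(l-1)$ statement for the vector-valued Weil representation $\rho_{m'}^{(g - l')}$, and a pairing argument of meromorphic modular forms against meromorphic formal Fourier--Jacobi series to handle arbitrary type.
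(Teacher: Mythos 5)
Your proposal is correct and is essentially identical to the paper's own proof: Theorem~\ref{tess} is obtained by combining Theorem~\ref{lolita} (every element of $\FMG$ is algebraic over $\MG$) with Theorem~\ref{ulysses} ($\MG$ is algebraically closed in $\FMG$), the inclusion $\MG \subseteq \FMG$ being immediate from the Fourier--Jacobi expansion. Your closing remarks about where the norm-Euclidean hypothesis enters and how the rest of Section~\ref{jump} bootstraps to arbitrary cogenus and type also match the paper's structure.
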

\begin{proof}
Combining Theorem~\ref{lolita} and Theorem~\ref{ulysses}, we complete the proof.
\end{proof}

We say that a point $\tau$ in the Hermitian upper half space $\bH_g$ is an elliptic fixed point if its stabilizer is strictly larger than the center $Z := Z \big(\UGZ \big)$ under the action of $\UGZ$ on $\bH_g$. We call $\ga \in \UGZ$ an elliptic element if $\ga \notin Z$ and has a fixed point in $\bH_g$.

\begin{lemma}\label{shelf}
Let $g \geq 2$ be an integer. Then, the codimension of elliptic fixed points on $\bH_g$ under the action of $\UGR$ is at least $g$. In particular, any holomorphic function on the complement of elliptic fixed points has an analytic continuation to the entire $\bH_g$.
\end{lemma}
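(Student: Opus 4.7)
The plan is to reduce the codimension claim to a pointwise linear-algebra bound at the standard basepoint $\tau_0 = i I_g \in \bH_g$ (using $\UGR$\nbd transitivity of $\bH_g$), and then derive the extension statement from Riemann's second extension theorem applied to a locally finite union of analytic subvarieties.

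First I would invoke transitivity of $\UGR$ on $\bH_g$ to move any fixed point of a non-central elliptic $\ga \in \UGZ$ to $\tau_0 = iI_g$. Its stabilizer $K \subseteq \UGR$ is a maximal compact subgroup, identified with $\rmU(g) \times \rmU(g)$ via $\begin{psmatrix} a & -c \\ c & a \end{psmatrix} \mapsto (u, v) := (a + ic,\, a - ic)$; differentiating the M\"obius action \eqref{blueberry} at $\tau_0$ shows that the isotropy representation on $T_{\tau_0} \bH_g \cong \Mat{g}(\C)$ is $(u, v) \cdot h = v h u^{-1}$. The kernel of this representation is the diagonal $\rmU(1) = \{(\lambda I_g, \lambda I_g) : |\lambda| = 1\}$, which coincides with the image of the center $Z = Z(\UGZ) \subseteq \UGR$ inside $K$; thus, after conjugation, a non-central elliptic $\ga$ corresponds to some $(u, v) \in \rmU(g) \times \rmU(g)$ that is not of the form $(\lambda I_g, \lambda I_g)$.

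Next I would establish the eigenvalue-counting bound: for such non-central $(u, v)$, simultaneously diagonalizing gives $\dim_\C \{h \in \Mat{g}(\C) : v h = h u\} = \sum_\alpha n^{(u)}_\alpha n^{(v)}_\alpha$, summed over distinct scalars $\alpha$, where $n^{(u)}_\alpha, n^{(v)}_\alpha$ are the multiplicities of $\alpha$ as an eigenvalue of $u, v$. If exactly one of $u, v$ is scalar (say $u = \lambda I_g$), the sum equals $g \cdot n^{(v)}_\lambda \leq g(g - 1)$ since $v$ is non-scalar; if both are scalar but distinct, the sum is zero; if both are non-scalar, then $\sum_\alpha (n^{(u)}_\alpha)^2 \leq (g - 1)^2 + 1 = g^2 - 2 g + 2$ and similarly for $v$, so Cauchy--Schwarz yields $\sum_\alpha n^{(u)}_\alpha n^{(v)}_\alpha \leq g^2 - 2 g + 2 \leq g^2 - g$ for $g \geq 2$. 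Consequently the fixed locus $F_\ga := \{\tau \in \bH_g : \ga \tau = \tau\}$, viewed as the preimage of the diagonal under $\tau \mapsto (\tau, \ga \tau)$, is a closed complex analytic subset whose Zariski tangent space at each point equals the corresponding fixed subspace of the differential, hence has complex codimension at least $g$ everywhere in $\bH_g$.

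Finally I would assemble the global picture. The elliptic locus is the countable union $E = \bigcup_\ga F_\ga$ over non-central elliptic $\ga$ in $\UGZ / Z$. By proper discontinuity of the $\UGZ$\nbd action on $\bH_g$, only finitely many $F_\ga$ can meet any relatively compact open subset, so locally $E$ is a finite union of analytic subsets of codimension at least $g \geq 2$, in particular an analytic subset of codimension at least two. Riemann's second extension theorem will then supply a unique holomorphic extension to all of $\bH_g$ of any holomorphic function on $\bH_g \setminus E$. I expect the main obstacle to be the eigenvalue-counting inequality above, where the several case splits for non-central $(u, v)$ must be handled carefully; once this is in place, the assembly via proper discontinuity and the extension via codimension-two Hartogs are standard.
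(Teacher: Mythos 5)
Your proposal is correct and takes essentially the same approach as the paper: reduce a non-central elliptic element (after conjugation) to a pair in the maximal compact $\rmU (g) \times \rmU (g)$ acting on $\Mat{g} (\C)$, bound the dimension of its fixed directions by $g^2 - g$, and then extend holomorphic functions across the resulting codimension-$\geq 2$ locus by Riemann extension. The differences are only presentational: the paper works in the disk model with diagonalized elements and simply asserts the counting bound and the extension step, whereas you prove the eigenvalue-multiplicity inequality, pass through the Zariski tangent space, and justify local finiteness of the union of fixed loci via proper discontinuity.
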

\begin{proof}
We work with the open unit disk model, where the unitary group
\begin{linenomath}
 \begin{gather*}
     \rmU := \big\{\ga \in \GL{2g} (\C) :  \ga^\ast \begin{psmatrix}I_g & 0_g \\ 0_g & -I_g\end{psmatrix} \ga = \begin{psmatrix}I_g & 0_g \\ 0_g & -I_g\end{psmatrix}\big\}
 \end{gather*}
\end{linenomath}
acts on $\bD_g = \big\{\sigma \in \mathrm{Mat}_g (\C): \sigma^\ast \sigma < I_g \big\} (\cong \bH_g)$. First we observe that conjugate elements in $\rmU$ have the same fixed points up to $\rmU$-translations, which have the same codimension. For every elliptic element $\ga$, since $\ga$ is of finite order, it is $\rmU$-conjugate to some elements in the standard maximal compact subgroup $\rmU_g(\R) \times \rmU_g(\R)$. Therefore, it suffices to show the assertion for diagonal matrices $\ga = \mathrm{diag} (\lambda_1, \lambda_2, \ldots \lambda_{2g}) \in \rmU \setminus Z(\rmU)$. In fact, for any fixed diagonal matrix $\ga = \mathrm{diag} (\lambda_i)_i  \in \rmU \setminus Z(\rmU)$, a point $w = (w_{i, j})_{i, j} \in \bD_g$ is fixed by $\ga$ if and only if the equations $\lambda_i w_{i, j} = \lambda_{j + g} w_{i, j}$ hold for all $i, j \in \big\{1, 2, \ldots, g \big\}$. In particular, any elliptic point fixed by $\ga$ lies in the hyperplane cut out by the equations $w_{i,j} = 0$ for the pairs $(i, j)$ such that $\lambda_i \neq \lambda_{j + g}$. But the number of these pairs for such $\ga$ is always at least $g$, as desired. The rest is clear from Riemann's first extension theorem. 
\end{proof}

Recall that for any $f \in \MR$, the values that $f$ can take are in the $\UGZ$-invariant subspace 
\[V (\rho, k) := \Big\{ v \in V (\rho): \forall \ga \in Z\,
\big( j(\ga, \tau)^k \rho (\ga) v = v \big) \Big\} \tx{.}\]
In the next proposition, we relate the space of values of Hermitian modular forms of weight $k$ and type $\rho$ to $V (\rho, k)$.

\begin{proposition}\label{dog}
Let $g \in \Z_{\geq 2}$, and let $\rho$ be a finite-dimensional representation of $\UGZ$. Then, for any integer $k$, there exists some integer $k_0$ such that for every $\tau \in \bH_g$ that is not an elliptic fixed point, the values of $\mathrm{M}_{k_0}^{(g)} (\rho)$ at $\tau$ linearly span the space $V(\rho, k)$.
\end{proposition}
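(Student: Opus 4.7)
The plan is to construct, for any non-elliptic $\tau_0 \in \bH_g$ and any $v \in V(\rho, k)$, a Hermitian modular form of type $\rho$ and some uniform weight $k_0$ whose value at $\tau_0$ equals $v$, by averaging scalar-valued forms for the finite-index subgroup $\Gamma := \ker \rho \subseteq \UGZ$ (finite index because $\rho$ factors through a finite quotient).

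First I would choose $k_0 \equiv k \pmod{|\OK^\times|}$. Since the center $Z$ of $\UGZ$ consists of scalar matrices $\zeta I_{2g}$ for $\zeta \in \OK^\times$ and $j(\zeta I_{2g}, \tau) = \zeta^g$, this choice ensures $V(\rho, k_0) = V(\rho, k)$. Applying the type-$\rho$, weight-$k_0$ modularity condition to central elements, which fix $\tau_0$, shows that the image of $\mathrm{ev}_{\tau_0} \colon M_{k_0}^{(g)}(\rho) \to V(\rho)$ already lies in $V(\rho, k_0) = V(\rho, k)$, so it suffices to prove surjectivity of this map for some $k_0$ uniformly in non-elliptic $\tau_0$.

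The key construction is the averaging series
\begin{linenomath}
\begin{gather*}
    A(h, v)(\tau) := \sum_{[\ga] \in \Gamma \backslash \UGZ} h(\ga\tau)\, j(\ga, \tau)^{-k_0}\, \rho(\ga)^{-1} v
\end{gather*}
\end{linenomath}
for $h \in M_{k_0}(\Gamma)$ and $v \in V(\rho, k)$. Since $[\UGZ : \Gamma] < \infty$, this is a finite sum, well-defined on cosets by $\Gamma$-modularity of $h$ and the inclusion $\Gamma \subseteq \ker\rho$; a standard cocycle computation shows $A(h, v) \in M_{k_0}^{(g)}(\rho)$. Because $\mathrm{Stab}_{\UGZ}(\tau_0) = Z$ for non-elliptic $\tau_0$, regrouping the sum by the finite double cosets $[\ga_0] \in \Gamma \backslash \UGZ / Z$ and using $z \tau_0 = \tau_0$ together with the defining relation of $V(\rho, k_0)$, the inner $Z$-summation collapses into
\begin{linenomath}
\begin{gather*}
    A(h, v)(\tau_0) = \sum_{[\ga_0] \in \Gamma \backslash \UGZ / Z} c_{[\ga_0]}(h)\, \rho(\ga_0)^{-1} v,
\end{gather*}
\end{linenomath}
where each $c_{[\ga_0]}(h)$ is a nonzero scalar multiple of $h(\ga_0 \tau_0)\, j(\ga_0, \tau_0)^{-k_0}$, and $\rho(\ga_0)^{-1}$ stabilizes $V(\rho, k)$ by centrality of $Z$.

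Finally, for $k_0$ sufficiently large within the congruence class $k \pmod{|\OK^\times|}$, I would choose $h \in M_{k_0}(\Gamma)$ that localizes at the identity coset: $h(\tau_0) \neq 0$ and $h(\ga_0 \tau_0) = 0$ for all $[\ga_0] \neq [1]$ in $\Gamma \backslash \UGZ / Z$. This yields $A(h, v)(\tau_0) = c_{[1]}(h)\, v$, a nonzero multiple of $v$, and varying $v$ over $V(\rho, k)$ completes the argument. The main technical obstacle is to secure such a separating $h$ at a single weight $k_0$ independent of $\tau_0$: the number of points $|\Gamma \backslash \UGZ / Z|$ is bounded by the fixed integer $[\UGZ : \Gamma Z]$, and the existence of weight-$k_0$ forms for $\Gamma$ separating any finite set of distinct non-elliptic, non-boundary points for $k_0 \gg 0$ is a standard consequence of the very ampleness of large powers of the Baily--Borel line bundle on the compactified $\Gamma$-modular variety — whose non-ample locus lies in the union of the boundary and the elliptic fixed-point set, which by Lemma~\ref{shelf} has codimension at least $g \geq 2$.
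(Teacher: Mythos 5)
Your construction is correct in outline, but it takes a genuinely different route from the paper. The paper works sheaf-theoretically on the Satake (Baily--Borel) compactification $X$ of $Y_g$: it takes the coherent sheaf $\cF_{k,\rho}$ of weight-$k$, type-$\rho$ forms, twists by a large power of the very ample line bundle $\cF_{k'}$ of scalar forms, invokes Serre's Theorem A to get global generation at weight $k_0 = k + k'n$ (with $n$ chosen so that $V(\rho,k)=V(\rho,k_0)$), and then reads off the claim from the stalk $\cF_{k_0,\rho,\tau} = \cO_{X,\tau}\otimes_\C V(\rho,k)$ at a non-elliptic point. You instead reduce the vector-valued problem to a scalar one by averaging over $\Gamma\backslash\UGZ$ with $\Gamma=\ker\rho$, collapsing the inner $Z$-sum at a non-elliptic point (this part of your computation — well-definedness on cosets, the cocycle check, the collapse using $j(\zeta I_{2g},\tau)=\zeta^g$ and the defining relation of $V(\rho,k_0)$, and the count of double cosets by $[\UGZ:\Gamma Z]$ — is sound), and then asking only for a scalar form $h\in\mathrm{M}_{k_0}(\Gamma)$ that is nonzero at $\tau_0$ and vanishes at the finitely many other $\Gamma$-inequivalent points $\ga_0\tau_0$. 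What your route buys is that all the positivity input is concentrated in a point-separation statement for scalar forms on the cover $\Gamma\backslash\Hg$, which is arguably more concrete than handling the coherent sheaf of vector-valued forms; what the paper's route buys is that the congruence constraint on the weight and the uniformity in $\tau$ come for free from Serre's theorem, since global generation holds for \emph{all} sufficiently large twists by the honest line bundle $\cF_{k'}$.

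The one step you state too casually is precisely the existence of the separating $h$ at a single weight $k_0$ that is both large and in the prescribed class $k_0\equiv k\pmod{|\OK^\times|}$. The congruence is genuinely unavoidable (otherwise the $Z$-sum annihilates $v$, since values of weight-$k_0$ forms always lie in $V(\rho,k_0)$), but ``very ampleness of large powers of the Baily--Borel bundle'' only hands you separating sections at weights divisible by the very ample weight $k'$, which need not meet your congruence class; moreover $\Gamma$ need not be neat, so for general $k_0$ the weight-$k_0$ forms are sections of a reflexive sheaf rather than of a genuine line bundle. This is fixable with the same machinery the paper uses: take a nonzero coherent sheaf $\cF_w$ of weight-$w$ forms for $\Gamma$ with $w\equiv k\pmod{|\OK^\times|}$, apply Serre's theorem to $\cF_w\otimes\cF_{k'}^{\otimes n}$ to get, at each non-elliptic interior point, a section nonvanishing there, and then multiply by sections of $\cF_{k'}$ (hyperplane sections in the projective embedding) vanishing at the other $\ga_0\tau_0$ but not at $\tau_0$; the resulting weight stays in the right class and is uniform because the number of points is bounded by $[\UGZ:\Gamma Z]$. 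So the skeleton is right, but closing this gap essentially re-imports the Baily--Borel plus Serre argument that constitutes the paper's entire proof, now on the $\Gamma$-cover.
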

\begin{proof}
Let $X$ denote the Satake compactification of $Y_g = \UGZ \backslash \Hg$. By the work of Satake and Baily-Borel, there is some positive integer $k^\prime$, such that the $\cO_X$-module $\cF_{k^\prime}$ of weight-$k^\prime$ Hermitian modular forms of trivial type is a very ample invertible sheaf on $X$. Let $\cF_{k, \rho}$ denote the $\cO_X$-module of weight-$k$ Hermitian modular forms of type $\rho$, which is a coherent sheaf. By a result of Serre (\cite{hartshorne-1977}, Theorem 5.17), for large enough integers $n$, the sheaf $\cF_{k, \rho} \otimes_{\cO_X} \cF_{k^\prime}^{\otimes n} = \cF_{k + k^\prime n, \rho}$ is generated by a finite number of global sections. We set $k_0 = k + k^\prime n$  for some fixed $n$ so that $V (\rho, k) = V (\rho, k_0)$ and that $\cF_{k_0, \rho}$ is generated by finitely many $f_i \in \cF_{k_0, \rho} (X)$. Taking stalks at any point $\tau \in \bH_g$ that is not an elliptic fixed point, we deduce that the stalk $\cF_{k_0, \rho, \tau} = \cO_{X, \tau} \otimes_\C V (\rho, k)$ is generated by $\{f_{i, \tau}\}$. In particular, the values $f_i (\tau)$ linearly span $V (\rho, k)$.
\end{proof}

By restricting $\rho$ on the invariant subspace $V (\rho, k)$ in the rest of this section, we may assume $V(\rho) = V(\rho, k)$. Let $g \geq 2$ be an integer, and $\sigma, \sigma^\prime$ be two complex finite dimensional representations of the Hermitian modular group $\UGZ$. Recall a canonical pairing $\langle\, , \, \rangle$ induced by evaluation
\[
\langle\, , \, \rangle: \rmM_{k}^{(g)} (\sigma^\vee \otimes \sigma^\prime) 
\times \mathrm{FM}_{k^\prime}^{(g)} (\sigma)
\lra \mathrm{FM}_{k + k^\prime}^{(g)} (\sigma^\prime)
\tx{.}
\]
Moreover, the pairing $\langle\, , \, \rangle$ can be defined for meromorphic Hermitian modular forms and meromorphic formal symmetric Fourier--Jacobi series; see \cite{bruinier-2015}, Definition 2.3 for further details. For this aim, we need to assume that $\rho$ factors through a finite quotient.

\begin{proposition}\label{home} 
Let $g, l$ be integers such that $g \geq 2$ and $1 \leq l \leq g - 1$. If \, $\FMGL = \MG$, then
\begin{linenomath}
 \begin{gather*}
    \FMGR = \MGR 
 \end{gather*}
\end{linenomath}
for every complex finite dimensional representation $\rho$ of $\UGZ$ that factors through a finite quotient.
\end{proposition}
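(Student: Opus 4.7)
The plan is to reduce to the hypothesized trivial-type case $\FMGL = \MG$ via the canonical pairing, which sends $\mathrm{M}_{k_0}^{(g)}(\rho^\vee) \times \FMGR$ into $\FMGL$. Let $f \in \FMGR$ be of weight $k$. Since values of $f$ lie in the center-isotypic subspace $V(\rho, k)$, we may replace $\rho$ by its restriction and assume $V(\rho) = V(\rho, k)$. Apply Proposition~\ref{dog} to $\rho^\vee$ to pick a weight $k_0$ such that the values of $\mathrm{M}_{k_0}^{(g)}(\rho^\vee)$ span $V(\rho^\vee, k_0)$ at every non-elliptic $\tau \in \bH_g$. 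By the freedom in the construction $k_0 = k_{\mathrm{ini}} + k' n$ in the proof of Proposition~\ref{dog}, we may additionally require $k_0 \equiv -k$ modulo an integer making the natural pairing $V(\rho^\vee, k_0) \otimes V(\rho, k) \to \C$ (from $\rho^\vee \otimes \rho \to \mathbf{1}$) non-degenerate. Fix a basis $h_1, \ldots, h_N$ of $\mathrm{M}_{k_0}^{(g)}(\rho^\vee)$ and set $g_i := \langle h_i, f \rangle$, a cogenus-$l$ symmetric formal Fourier--Jacobi series of trivial type; by the hypothesis $\FMGL = \MG$, $g_i \in \mathrm{M}_{k + k_0}^{(g)}$ is an honest Hermitian modular form.

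At every non-elliptic $\tau \in \bH_g$, the functionals $\{h_i(\tau)\}$ span $V(\rho, k)^\vee$ by construction, so the linear system $h_i(\tau)(v) = g_i(\tau)$ has a unique solution $v = \tilde f(\tau) \in V(\rho, k)$. Cramer's rule makes $\tilde f$ holomorphic on the non-elliptic locus, and Lemma~\ref{shelf} extends it to a holomorphic function on all of $\bH_g$. Combining the transformation laws of the $g_i$ and $h_i$ with the point-separation property of $\{h_i(\tau)\}$ at non-elliptic $\tau$ yields $\tilde f(\gamma \tau) = j(\gamma, \tau)^k \rho(\gamma) \tilde f(\tau)$ for all $\gamma \in \UGZ$, so $\tilde f \in \MGR$.

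The main obstacle is to identify $\tilde f$ with $f$ as a formal Fourier--Jacobi series. Set $F := f - \tilde f \in \FMGR$; the equalities $h_i \tilde f = g_i = h_i f$ give $h_i F = 0$ in $\FMGL$ for every $i$. Writing $F = \sum_m \delta_m e(m \tau_2)$, comparison of Fourier--Jacobi coefficients yields $\sum_{n + m' = m} h_{i, n} \delta_{m'} = 0$ for every $i$ and every $m \in \mathrm{Herm}_l(E)_{\geq 0}$. Inducting on the trace of $m$ (well-founded on $\Z_{\geq 0}$ for semi-integral positive semi-definite $m$), and using that $n, m' \geq 0$ with $n + m' = m$ forces $\tr(m') \leq \tr(m)$ with equality iff $n = 0$, the vanishing of $F$ reduces to the following: if $\Phi^l(h_i) \cdot \delta = 0$ in the Jacobi-form pairing for every $i$, where $\Phi^l$ denotes the partial Siegel operator from degree $g$ to degree $g - l$ (equivalently, the index-$0$ cogenus-$l$ Fourier--Jacobi coefficient) and $\delta \in \mathrm{J}^{(g - l)}_{k, m}(\rho)$, then $\delta = 0$. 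For this we enlarge $k_0$ so that $\Phi^l\colon \mathrm{M}_{k_0}^{(g)}(\rho^\vee) \twoheadrightarrow \mathrm{M}_{k_0}^{(g - l)}(\rho^\vee)$ surjects via Hermitian Klingen--Eisenstein lifts (converging for large $k_0$), and then invoke Proposition~\ref{dog} in degree $g - l$ to deduce spanning of $\{\Phi^l(h_i)(\tau_1)\}_i$ at every non-elliptic $\tau_1 \in \bH_{g - l}$, forcing $\delta = 0$. The delicate part is the simultaneous calibration of the three constraints on $k_0$ (spanning in degree $g$, central-character matching, and Siegel surjectivity).
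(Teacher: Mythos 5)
Your first half follows the paper's own route: restrict to $V(\rho,k)$, invoke Proposition~\ref{dog} for $\rho^\vee$, pair $f$ against weight-$k_0$ forms of type $\rho^\vee$ to land in the trivial-type space, use the hypothesis $\FMGL = \MG$, and finish with Lemma~\ref{shelf}. The divergence is at the identification step, and that is where your argument has genuine gaps. First, your definition of $\tilde f(\tau)$ as ``the'' solution of the overdetermined system $h_i(\tau)(v) = g_i(\tau)$, $i = 1,\dots,N$, is not justified: spanning of $\{h_i(\tau)\}$ gives uniqueness, not existence. Existence requires that every pointwise relation $\sum_i c_i h_i(\tau_0) = 0$ forces $\sum_i c_i g_i(\tau_0) = 0$; but $g_i = \langle h_i, f\rangle$ is only a \emph{formal} pairing, and since $f$ cannot be evaluated at $\tau_0$ you cannot deduce the value $(\sum_i c_i g_i)(\tau_0)$ from the vanishing of $\sum_i c_i h_i$ at the single point $\tau_0$. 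In effect, consistency of the overdetermined system is essentially equivalent to the convergence statement you are trying to prove. (If you shrink to a square subsystem with invertible values near a point, existence is restored locally, but then the identities $\langle h_i, F\rangle = 0$ you feed into your induction are only available for that subset, and your later spanning argument after applying $\Phi^l$ breaks.)

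Second, even granting $\langle h_i, F\rangle = 0$ for all $i$, your induction on $\mathrm{tr}(m)$ hinges on surjectivity of the partial Siegel operator $\Phi^l\colon \rmM_{k_0}^{(g)}(\rho^\vee) \to \rmM_{k_0}^{(g-l)}(\rho^\vee)$ via Hermitian Klingen--Eisenstein lifts for vector-valued arithmetic types, together with Proposition~\ref{dog} in degree $g-l$ at the \emph{same} weight $k_0$ and the central-character matching. None of this is established in the paper or in your sketch, and it is a substantial extra ingredient: Klingen--Eisenstein series for finite-quotient types of $\rmU(g,g)(\Z)$, their convergence range, and the compatibility of the three constraints on $k_0$ (Serre's theorem produces $k_0$ in an arithmetic progression whose common difference depends on the degree, so simultaneous calibration in degrees $g$ and $g-l$ needs an argument). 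The paper avoids both issues at once: it chooses only $n$ forms $F_i$ with linearly independent values at a point, inverts the matrix $F$ as a \emph{meromorphic} modular form, and defines $\widetilde f := \langle F^{-1}, \langle F, f\rangle\rangle$ using the meromorphic pairing of \cite{bruinier-2015}, Definition 2.3; then the formal Fourier--Jacobi expansion of $\widetilde f$ matches $f$ by the formal identity $F^{-1}F = \mathrm{id}$ (torsion-freeness of formal series over nonzero modular forms), so no Siegel-operator surjectivity and no overdetermined system are needed. To repair your proof along its present lines you would have to prove the consistency claim and the vector-valued Klingen--Eisenstein surjectivity; the natural fix is instead to adopt the meromorphic-pairing formulation, which collapses your argument back to the paper's.
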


\begin{proof}
We follow the proof in \cite{bruinier-2015}, Theorem 1.2. Let $f \in \FMR$ be an arbitrary symmetric Fourier--Jacobi series of weight $k$, and let $n$ be the dimension of $V(\rho)$. First we fix an integer $k_0$ by Lemma~\ref{dog}, such that at every $\tau$ which is not an elliptic fixed point, the values of $ \rmM_{k_0}^{(g)}(\rho^\vee)$ generate the space $V(\rho^\vee)$, and we also fix such a point $\tau$.

Let $\rho_0^n$ be the trivial representation on $\C^n$. By assumption, there is some tuple of Hermitian modular forms $F = (F_i)_i \in \big({\rmM_{k_0}^{(g)}(\rho^\vee)}\big)^n$ such that the values $F_i (\tau) \in V (\rho^\vee)$ for $i = 1, \ldots, n$ are linearly independent. This defines a Hermitan modular form of weight $k_0$ and type $\rho^\vee \otimes \rho_0^n$, also denoted by $F$, as well as a meromorphic Hermitian modular form $F^{-1}$ of weight $- k_0$ valued in $\Hom_\C (\C^n, V(\rho))$, via the assignment $\tau^\prime \lmto \big(F_i(\tau^\prime) \big)_i^{-1}$ for the nondegenerate matrix $\big(F_i(\tau^\prime) \big)_i$.

We consider the pairing $\widetilde{f} := \big\langle F^{-1}, \langle F, f \rangle \big\rangle$.  By the assumption $\FMGL = \MG$, it is clear that $\widetilde{f}$ is a meromorphic Hermitian modular form of genus $g$, weight $k$, and type $\rho$, and its formal Fourier--Jacobi expansion matches $f$. Since $\widetilde{f}$ is holomorphic on an open neighborhood of $\tau$ by our choice of $F$, varying $\tau$ we find $f$ is holomorphic on the complement of the elliptic fixed points. By Lemma~\ref{shelf}, $f$ is holomorphic on $\bH_g$, and its modularity follows from that of $\widetilde{f}$.
\end{proof}

\begin{proposition}\label{holmes}
Let $g \geq 2$ be an integer. Assume $\FMGP = \MGP$ for every integer $g^\prime$ such that $2 \leq g^\prime \leq g - 1$ and for every complex finite dimensional representation $\rho$ of $\rmU(g^\prime, g^\prime) (\Z)$ that factors through a finite quotient. If $\FMG = \MG$, then $\FMGL = \MG$ for every integer $l$ satisfying $1 \leq l \leq g - 1$.
\end{proposition}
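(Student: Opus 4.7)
The plan is to induct on $l$ from $l = 1$ to $l = g - 1$. The base case $l = 1$ is exactly the ``if'' hypothesis $\FMG = \MG$. For the inductive step with $l \geq 2$, I assume $\mathrm{FM}_\bullet^{(g, l-1)} = \MG$ and take an arbitrary $f \in \mathrm{FM}_k^{(g, l)}$; the goal is to exhibit $f$ as an element of $\mathrm{FM}_k^{(g, l-1)}$, after which the inductive hypothesis immediately yields $f \in \mathrm{M}_k^{(g)}$.

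To that end, I refine $f$ via the cogenus-$(l-1)$ expansion of Section~\ref{Alice} (taking $l^\prime = l - 1$, which is exactly the setup of Proposition~\ref{boutique}), obtaining formal Fourier--Jacobi coefficients $\psi_{m^\prime}$ indexed by $m^\prime \in \mathrm{Herm}_{l-1}(E)_{\geq 0}$. Since the $\GL{g}(\cO_E)$-symmetry condition \eqref{zhou} is phrased purely in terms of the Fourier coefficients of $f$, and these coefficients are independent of the chosen refinement, the remaining task is to verify that every $\psi_{m^\prime}$ is a genuine Hermitian Jacobi form of genus $g - l + 1$, cogenus $l - 1$, and index $m^\prime$. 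For positive definite $m^\prime$ this follows from Proposition~\ref{boutique}: it yields the formal theta decomposition $\psi_{m^\prime} = \sum_{s^\prime} h_{m^\prime, s^\prime}\, \theta_{m^\prime, s^\prime}^{(g-l+1)}$ with $(h_{m^\prime, s^\prime})_{s^\prime} \in \mathrm{FM}_{k-l+1}^{(g-l+1)}(\rho_{m^\prime}^{(g-l+1)})$; the constraints $l \geq 2$ and $l \leq g - 1$ force $2 \leq g - l + 1 \leq g - 1$, so the outer hypothesis upgrades the formal series $(h_{m^\prime, s^\prime})_{s^\prime}$ to a genuine vector-valued Hermitian modular form, and the classical theta decomposition Proposition~\ref{greece} then assembles $\psi_{m^\prime}$ into an honest Hermitian Jacobi form.

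The main obstacle is the semidefinite case $m^\prime \geq 0$ not positive definite, since Proposition~\ref{boutique} applies only to positive definite indices. For $m^\prime = 0$, I plan to imitate the proof of Proposition~\ref{finnegan}: positive semidefiniteness of $\begin{psmatrix}n^\prime & r^\prime \\ {r^\prime}^\ast & 0\end{psmatrix}$ forces $r^\prime = 0$, and the Heisenberg transformation law for the Jacobi form $\phi_{\begin{psmatrix}n^\prime & 0 \\ 0 & 0\end{psmatrix}}$ forces its constancy in the mixed Jacobi variables; consequently $\psi_0$ re-assembles into a cogenus-$1$ symmetric formal Fourier--Jacobi series on $\bH_{g-l+1}$, which by the outer hypothesis applied at $g^\prime = g - l + 1$ is a Hermitian modular form, that is, a Jacobi form of index $0$. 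For semidefinite $m^\prime$ of intermediate rank $0 < r < l - 1$, I diagonalize $m^\prime = u^\ast \begin{psmatrix}m^{\prime\prime} & 0 \\ 0 & 0\end{psmatrix} u$ with $u \in \GL{l-1}(\cO_E)$ and positive definite $m^{\prime\prime}$ of rank $r$, and use the $\GL{g}(\cO_E)$-symmetry \eqref{zhou} of $f$ together with the two previously established cases to reduce to a Jacobi form on a positive definite sublattice. Once all $\psi_{m^\prime}$ are verified to be Hermitian Jacobi forms, the refined series lies in $\mathrm{FM}_k^{(g, l-1)}$, the inductive hypothesis on $l$ applies, and the induction closes.
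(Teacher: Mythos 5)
Your skeleton matches the paper's: induction on the cogenus, refinement of $f$ to its cogenus-$(l-1)$ expansion, the observation that the $\GL{g}(\OK)$-symmetry \eqref{zhou} is inherited automatically, and the treatment of positive definite indices via Proposition~\ref{boutique} followed by the outer hypothesis at degree $g-l+1$ (with the Weil representation) and Proposition~\ref{greece}. Your handling of the index $m^\prime = 0_{l-1}$ — a Finnegan-style reduction to a cogenus-$1$ symmetric series of degree $g-l+1$, then the outer hypothesis with trivial type — is a legitimate variant of the paper's route. The genuine gap is at the degenerate indices of intermediate rank, $0 < \rk m^\prime < l-1$, which occur as soon as $l \geq 3$ (so for every $g \geq 4$ the proposition is not yet proved by your argument). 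Conjugation by $u \in \GL{l-1}(\OK)$ only brings $m^\prime$ to the block shape $\begin{psmatrix}m^{\prime\prime}&0\\0&0\end{psmatrix}$; it cannot change the rank, so neither of your ``two previously established cases'' (full-rank positive definite, or zero) applies, Proposition~\ref{boutique} is unavailable (it requires a positive definite index and is only proved for the cogenus step $l^\prime = l-1$), and your induction hypothesis, stated only at degree $g$, gives you nothing to fall back on. ``Reduce to a Jacobi form on a positive definite sublattice'' is the right instinct but is not an argument as written.

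The paper closes exactly this case by a different mechanism, which is why its induction is quantified over all degrees $g^\prime \geq l$ rather than at the single degree $g$ (the cogenus-$1$ base case at every degree $2 \leq g^\prime \leq g$ being supplied by the proposition's hypotheses). It applies Proposition~\ref{finnegan} to the cogenus-$1$ coefficient $\psi_0$ of index $0 \in \mathrm{Herm}_1(E)_{\geq 0}$, obtaining $\psi_0 \in \mathrm{FM}_k^{(g-1,\,l-1)}$, and then invokes the induction hypothesis at degree $g-1$ to conclude $\psi_0 \in \mathrm{M}_k^{(g-1)}$; every $\psi_{\begin{psmatrix}m^{\prime\prime}&0\\0&0\end{psmatrix}}$ with $m^{\prime\prime} \in \mathrm{Herm}_{l-2}(E)_{\geq 0}$ arbitrary (of any rank) is then, via a relation of the shape \eqref{John}, a genuine Fourier--Jacobi coefficient of this convergent modular form, and the $\GL{}$-symmetry reduces a general degenerate index to this block shape by splitting off a single radical vector. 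To repair your proof you must either strengthen the induction so that the cogenus-$(l-1)$ statement is available at degree $g-1$ (the paper's choice), or supply the intermediate-rank case directly, e.g.\ by first splitting off the full $(l-1-r)$-dimensional radical with a generalized Finnegan-type identification, landing in a cogenus-$(r+1)$ symmetric series of degree $g-l+1+r$, and only then applying Proposition~\ref{boutique} (now at its allowed step) and the outer hypothesis at degree $g-l+1$; neither is carried out in your proposal.
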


\begin{proof}
We prove the statement by induction on the cogenus $l$. The case of $l = 1$ is true for all $g \geq 2$ by assumption. For the case of $l \geq 2$, we fix $l^\prime := l - 1$. Assuming $\mathrm{FM}_\bullet^{(g^\prime, l^\prime)} = \mathrm{M}_\bullet^{(g^\prime)}$ for all $g^\prime \geq l$, we have to show that $\FMGL = \MG$ for all $g \geq l + 1$.

Let $f \in \FMC$ be an arbitrary symmetric Fourier--Jacobi series of cogenus $l$. To show $f \in \mathrm{FM}_\bullet^{(g, l^\prime)} = \MG$, we consider its formal Fourier--Jacobi expansion of cogenus $l^\prime$ defined in Subsection~\ref{Alice}. In fact, for $\tau = \begin{psmatrix}
\tau_1 & w \\
z & \tau_2
\end{psmatrix} \in \bH_g$ where $\tau_2 \in \bH_{l^\prime}$, the formal Fourier--Jacobi expansion
\begin{linenomath}
\begin{gather}\label{Seine}
    f (\tau) = \sum_{m^\prime \in \mathrm{Herm}_{l^\prime} (E)_{\geq 0}}
    \psi_{m^\prime} (\tau_1, w, z) e (m^\prime \tau_2)
\end{gather}
\end{linenomath}
holds as an identity of formal Fourier series, and is invariant under the weight-$k$ slash action of the Hermitian modular group $\UGZ$, as $f \in \FMC$. Therefore, it suffices to show the convergence of each $\psi_{m^\prime}$. 

If $m^\prime$ is nondegenerate, then $\psi_{m^\prime}$ admits a formal theta decomposition with coefficients $(h_{m^\prime, s^\prime})_{s^\prime}  \in \mathrm{FM}_{k - l^\prime}^{(g - l^\prime)} (\rho_{m^\prime}^{(g - l^\prime)})$, by Proposition~\ref{boutique}.  The assumption implies that $(h_{m^\prime, s^\prime})_{s^\prime}$ converges, and hence so does $\psi_{m^\prime}$. 

If $m^\prime$ is degenerate, we observe that for $0 \in \mathrm{Herm}_1 (E)_{\geq 0}$ and for every $m^{\prime\prime} \in \mathrm{Herm}_{l^\prime - 1} (E)_{\geq 0}$, $\psi_0$ and $\psi_{\begin{psmatrix}m^{\prime\prime} & 0 \\
0 & 0\end{psmatrix}}$ satisfy a relation in the form of Equation~\eqref{John}. By Proposition~\ref{finnegan} and the induction assumption, we have $\psi_0 \in \rmM_k^{(g - 1)}$, hence $\psi_{\begin{psmatrix}m^{\prime\prime} & 0 \\
0 & 0\end{psmatrix}}$ must be a Hermitian Jacobi form. Furthermore, for any degenerate matrix $m^\prime \in \mathrm{Herm}_{l^\prime} (E)_{\geq 0}$, there is a suitable matrix $u^\prime \in \GL{l^\prime} (\OK)$ such that ${u^\prime}^\ast m^\prime u^\prime$ is of the form $\begin{psmatrix}m^{\prime\prime} & 0 \\
0 & 0\end{psmatrix}$. By the invariance of \eqref{Seine} under the embedded action of  $\begin{psmatrix}
I_{g - l^\prime} & 0 \\
0 & u^\prime \end{psmatrix} \in \GL{g} (\OK)$, the convergence of $\psi_{m^\prime}$ is then reduced to that of $\psi_{\begin{psmatrix}m^{\prime\prime} & 0 \\
0 & 0\end{psmatrix}}$.
\end{proof}
\begin{remark}
As is shown in the proof, we may only assume in the proposition that $\FMGP = \MGP$ for every integer $g^\prime$ such that $2 \leq g^\prime \leq g - 1$ and for every Weil representation $\rho$ of $\rmU(g^\prime, g^\prime) (\Z)$.
\end{remark}

\begin{theorem}\label{heaven}
Let $E/\Q$ be a norm-Euclidean imaginary quadratic field, over which the Hermitian modular groups are defined. Let $g, l$ be integers such that $g \geq 2$ and $1 \leq l \leq g - 1$. Let $k$ be an integer and $\rho$ be a finite dimensional representation of $\UGZ$ that factors through a finite quotient. Then,
\begin{linenomath}
\begin{gather*}
    \FMR = \MR
    \tx{.}
\end{gather*}
\end{linenomath}
\end{theorem}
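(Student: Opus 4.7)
The plan is to deduce Theorem~\ref{heaven} by a short induction on the genus $g$ that simply assembles the three results established in this section: Theorem~\ref{tess} (scalar trivial type, cogenus $1$), Proposition~\ref{holmes} (boosting cogenus $1$ to arbitrary cogenus $l$ in the trivial type), and Proposition~\ref{home} (boosting the trivial type to arbitrary arithmetic type factoring through a finite quotient). No new analytic input is needed; the strategy is a bookkeeping argument.

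For the base case $g = 2$, I would note that the only admissible cogenus is $l = 1$. Theorem~\ref{tess} immediately gives $\mathrm{FM}_\bullet^{(2)} = \mathrm{M}_\bullet^{(2)}$, and Proposition~\ref{home} applied with $l = 1$ upgrades this scalar statement to $\FMR = \MR$ for every type $\rho$ factoring through a finite quotient, at all weights $k$.

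For the inductive step, suppose the conclusion holds for every $g^\prime$ with $2 \leq g^\prime \leq g - 1$, every cogenus $1 \leq l^\prime \leq g^\prime - 1$, and every type factoring through a finite quotient. In particular, specializing the induction hypothesis to cogenus $1$ gives $\mathrm{FM}_\bullet^{(g^\prime)}(\rho) = \mathrm{M}_\bullet^{(g^\prime)}(\rho)$ for all such $g^\prime$ and $\rho$, which is exactly the hypothesis required by Proposition~\ref{holmes}. Together with $\FMG = \MG$ supplied by Theorem~\ref{tess} at the current genus $g$, Proposition~\ref{holmes} then produces $\FMGL = \MG$ for every cogenus $1 \leq l \leq g - 1$, that is, the scalar trivial-type statement in every cogenus. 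For each such fixed $l$, Proposition~\ref{home} then upgrades this to $\FMR = \MR$ for every complex finite dimensional representation $\rho$ of $\UGZ$ factoring through a finite quotient, completing the induction.

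Since every step invokes a result that is already proved, there is no real obstacle remaining: the heavy lifting is absorbed into Theorem~\ref{tess} (which in turn rests on the algebraicity result of Section~\ref{run} and the algebraic-closedness result of Section~\ref{fly}) and into the two reduction propositions. The only care required is in sequencing the inductions correctly — induct on $g$ on the outside, apply Proposition~\ref{holmes} to climb across cogenus, and apply Proposition~\ref{home} last to climb across types — so that the hypotheses of Propositions~\ref{holmes} and~\ref{home} are available when they are invoked.
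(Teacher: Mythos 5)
Your argument is correct and is essentially the paper's own proof, which likewise chains Theorem~\ref{tess} with Proposition~\ref{home} (to supply the all-type, cogenus-$1$ hypothesis at smaller genera), then Proposition~\ref{holmes}, then Proposition~\ref{home} again; your explicit induction on $g$ is just a more detailed way of organizing that same sequencing, and the hypotheses of both reduction propositions are available exactly when you invoke them.
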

\begin{proof}
Combining Theorem~\ref{tess} and a successive application of Propositions~\ref{home}, \ref{holmes}, and \ref{home} again, we complete the proof.
\end{proof}

\section{Application: Kudla's modularity conjecture for unitary Shimura varieties}\label{sit}
In this section, we discuss more details for an application of Theorem~\ref{heaven} in the context of Kudla's conjecture for unitary Shimura varieties, following Section 1 in \cite{kudla-2004} and Section 3 in \cite{MR2928563}. We adopt the adelic language for the benefit of not handling the class number problem explicitly.

Let $E/\Q$ be an imaginary quadratic field and fix a complex embedding $E \lhra \C$. Let $n \geq 2$ be an integer. Let $V$ be an $E$-Hermitian space of signature $(n - 1, 1)$, and $H = \mathrm{U} (V)$ be the unitary group of $V$, which is a reductive group over $\Q$. Let $\bD$ be the Hermitian symmetric space attached to $H (\bR)$, which in a projective model consists of all negative $\C$-lines in $V (\C)$. For any compact open subgroup $K \subseteq H (\bA_f)$, these data determine a Shimura variety $X_K = \mathrm{Sh}_K (H, \bD)$, which is a smooth quasi-projective variety of dimension $n - 1$ defined over the reflex field $E$, whose complex points $X_K (\C)$ can be identified with the locally symmetric space
\begin{linenomath}
 \begin{gather*}
     H (\Q)\backslash \big(\bD \times H (\bA_f) / K\big)\tx{.}
 \end{gather*}
\end{linenomath}

We then proceed to define special cycles. Let $g$ be an integer such that $1 \leq g \leq n - 1$. For a $g$-tuple $\lambda = (\lambda_i)_i \in V(E)^g$, we define $V_\lambda := \{v \in V: \forall i (v \perp \lambda_i) \}$, $H_\lambda := \mathrm{U} (V_\lambda)$, and $\bD_\lambda := \{w \in \bD: \forall i (w \perp \lambda_i) \}$. For every compact open subgroup $K \subseteq H (\bA_f)$ and every coset $\ga \in K \backslash H (\bA_f)$, these data determine a ``translated'' algebraic cycle $Z_K(\lambda, \ga)$ on $X_K$ over $E$ at level $K$ via the map
\begin{linenomath}
 \begin{align*}
     H_\lambda (\Q) \backslash \big( \bD_\lambda \times H_\lambda (\bA_f) 
     / H_\lambda (\bA_f) \cap \ga^{-1} K \ga \big) & \lra 
     H (\Q) \backslash \big( \bD \times H (\bA_f) / K \big)  = X_K (\C)\tx{,} \\
     [(w, \ga^\prime)] & \lmto [(w, \ga^\prime \ga^{-1})]
     \tx{.}
 \end{align*}
\end{linenomath}

Note that $Z_K(\lambda, \ga)$ vanishes unless the $g \times g$ Hermitian matrix $Q (\lambda) := \frac{1}{2} \big( (\lambda_i, \lambda_j) \big)_{i, j}$ is positive semidefinite and $\mathrm{rk} (Q (\lambda)) = \mathrm{rk} \{\lambda_i: 1 \leq i \leq g\}$. Given a nice weight function, special cycles are a family of weighted linear combinations of these translated cycles over essentially integral tuples $\lambda$, indexed by the values of $Q (\lambda)$. More precisely, this family is defined as follows. Let $T \in \mathrm{Herm}_{g} (E)_{\geq 0}$ be a Hermitian positive semidefinite matrix, and $\varphi$ be a $K$-invariant Schwartz-Bruhat function on the finite adelic points $V (\bA_{E,f})^g$ over $E$.  If $V (E)$ is nonempty, for a fixed rational tuple $\lambda_0 \in V (E)^g$,  and the shift $\ga \in K \backslash H (\bA_f)$ such that $\ga \lambda_0 = \lambda$, we define the translated cycle $Z_K (\lambda)$ to be $Z_K(\lambda_0, \ga)$. The special cycle $Z_K (T, \varphi)$ in $X_K$ is then defined to be the finite sum
\begin{linenomath}
 \begin{gather*}
     Z_K (T, \varphi) := \sum_{\lambda \in K \backslash \Omega_{T} 
     (\bA_{E, f})} \varphi (\lambda) Z_K (\lambda)
     \tx{,}
\end{gather*}
\end{linenomath}
for the set $\Omega_{T}  (\bA_{E, f}) := \{\lambda \in V (\bA_{E, f})^g : Q (\lambda) = T\}$. Note that the finiteness of the sum is due to the compactness of the set $\Omega_{T}  (\bA_{E, f}) \cap \mathrm{supp} (\varphi)$. Moreover, the special cycles $Z_K (T, \varphi)$ as $K$ varies are compatible in the projective system under the \'etale covering maps of the corresponding Shimura varieties $X_K$, so we may write $Z (T, \varphi)$ for $Z_K (T, \varphi)$. 

We are in position to state the unitary Kudla modularity conjecture. Let $\cL_{\bD}$ be the tautological line bundle over $\bD$, that is, the restriction to $\bD$ of the line bundle $\cO (- 1)$ over $\bP (V (\C))$. Since the action of $H (\bR)$ on $\bD$ lifts naturally to  a certain action on $\cL_{\bD}$, the line bundle $ \cL_{\bD}$ descends to a line bundle $\cL_K$ over the Shimura variety $X_K$. Just like the special cycles $Z_K (T, \varphi)$, the line bundles $\cL_K$ are also compatible in the projective system of Shimura varities $X_K$ as $K$ varies, and we write $\cL^\vee$ for the class of the dual bundle in the first Chow group $\mathrm{CH}^1 (X_K)$. It is clear that $Z (T, \varphi)$ is a cycle of codimension $r (T)$ (the rank of $T$), and we write $Z (T, \varphi)$ also for its class in the Chow group $\mathrm{CH}^{r (T)} (X_K)$. Finally, the generating series for special cycles on $X_K$ is defined to be the formal sum
\begin{linenomath}
 \begin{gather*}
 \psi_{g, \varphi}^{\mathrm{CH}} (\tau) :=
\sum_{T \in \mathrm{Herm}_g (E)_{\geq 0}}     
Z (T, \varphi) \cdot (\cL^\vee )^{g - r (T)} q^T
\tx{,}
 \end{gather*}
\end{linenomath}
where the dot ``$\cdot$'' denotes the product in the Chow ring $\mathrm{CH}^{\bullet} (X_K)$, and $q^T = \exp \big( 2 \pi i\, \mathrm{tr} (T \tau) \big)$ for $\tau$ in the Hermitian upper half space $\Hg$.
The unitary Kudla's modularity conjecture states that $\psi_{g, \varphi}^{\mathrm{CH}} (\tau)$ is a Hermitian modular form over $E/\Q$ of genus $g$ and weight $n$ valued in the Chow group $\mathrm{CH}^{g} (X_K)$. 

An immediate consequence of this conjecture is that the $\C$-span of special cycles $Z (T, \varphi)$ in the complexification $\mathrm{CH}^{g} (X_K)_\C$ is finite dimensional, and a further appealing feature is that relations between Fourier coefficients of certain Hermitian modular forms give rise to the corresponding relations between special cycles $Z (T, \varphi)$ in $\mathrm{CH}^{g} (X_K)_\C$. See \cite{bruinier-raum-2015, raum-2015c} for this computational aspect in the orthogonal case. Under the assumption that the generating series $\psi_{g, \varphi}^{\mathrm{CH}} (\tau)$ is absolutely convergent, Y. Liu proved the conjecture in Theorem 3.5 of \cite{MR2928563}.  Combining his result with Theorem~\ref{heaven}, we obtain the following result.

\begin{theorem}\label{Paradise}
In the cases of norm-Euclidean imaginary quadratic fields, the unitary Kudla conjecture is true for open Shimura varieties.
\end{theorem}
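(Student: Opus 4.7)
The plan is to combine the absolute-convergence-free half of Y. Liu's Theorem~3.5 in \cite{MR2928563} with Theorem~\ref{heaven} of this manuscript. Liu's result has two parts: first, working purely formally, the generating series $\psi_{g,\varphi}^{\mathrm{CH}}(\tau)$ is shown to be a symmetric formal Fourier--Jacobi series valued in the Chow group $\mathrm{CH}^g(X_K)_\C$; second, \emph{assuming} absolute convergence, this formal modularity is upgraded to genuine modularity. In the norm-Euclidean cases, Theorem~\ref{heaven} removes the absolute convergence assumption outright, so the two results dovetail perfectly.

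The first step is to unpack Liu's construction and to verify that it produces an element of $\FMR$ in the sense of Subsection~\ref{calmness}, for an appropriate finite-quotient arithmetic type $\rho$. Concretely, Liu establishes that the Fourier--Jacobi coefficients $\phi_m$ of $\psi_{g,\varphi}^{\mathrm{CH}}$ (for each positive semidefinite Hermitian index $m$) are Hermitian Jacobi forms of genus $g-l$, weight $n$, and index $m$ valued in $\mathrm{CH}^g(X_K)_\C$, and that the $\mathrm{GL}_g(\cO_E)$-symmetry \eqref{zhou} holds at the level of Chow classes. Because at fixed level $K$ the finite sums defining $Z(T,\varphi)$ involve only finitely many orbits of cycles, the values of $\psi_{g,\varphi}^{\mathrm{CH}}$ lie in a finite-dimensional $\C$-subspace $W \subseteq \mathrm{CH}^g(X_K)_\C$ carrying a representation $\rho$ of $\UGZ$ that factors through a finite quotient (controlled by the level structure $K$). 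Thus $\psi_{g,\varphi}^{\mathrm{CH}} \in \mathrm{FM}_n^{(g,l)}(\rho)$ in the precise sense required by Theorem~\ref{heaven}, for every cogenus $1 \le l \le g-1$.

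The second step is a direct application of Theorem~\ref{heaven}: choosing a basis of $W$ decomposes $\psi_{g,\varphi}^{\mathrm{CH}}$ into finitely many scalar-valued components $\psi_{g,\varphi,i}^{\mathrm{CH}} \in \mathrm{FM}_n^{(g,l)}(\rho)$; each such component converges absolutely on the whole Hermitian upper half-space $\Hg$ and defines a Hermitian modular form of weight $n$ and type $\rho$. Reassembling the components yields that $\psi_{g,\varphi}^{\mathrm{CH}}(\tau)$ is a $W$-valued Hermitian modular form of genus $g$ and weight $n$, which is precisely the content of Kudla's conjecture for the open Shimura variety $X_K$. The only nontrivial verification is compatibility of conventions: confirming that the formal modularity that Liu proves matches the definition of $\FMR$ used here (both the Jacobi-form property of each coefficient and the precise form of the $\mathrm{GL}_g(\cO_E)$-symmetry), and checking that the arithmetic type induced from the $K$-action genuinely factors through a finite quotient---this is the main, though essentially bookkeeping, obstacle in the argument.
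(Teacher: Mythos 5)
Your reduction has the right overall shape---combine Liu's Theorem~3.5 with Theorem~\ref{heaven}---but it assumes away exactly the step that requires work. You attribute to Liu the statement that the Fourier--Jacobi coefficients $\phi_{m,\varphi}$ of $\psi_{g,\varphi}^{\mathrm{CH}}$ are genuine (absolutely convergent, holomorphic) Hermitian Jacobi forms valued in $\mathrm{CH}^g(X_K)_\C$. Liu does not prove this unconditionally in codimension $g\geq 2$: the first part of his Theorem~3.5 only supplies the formal structure (modularity granted convergence), and the unconditional statement is confined to codimension one (special divisors). But membership in the space of symmetric formal Fourier--Jacobi series in the sense of Subsection~\ref{calmness} demands that each coefficient be an honest Jacobi form, not merely a formal series with the right symmetries, so the absolute convergence of each $\phi_{m,\varphi}$ must be proved. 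This is the substantive content of the paper's proof: it takes cogenus $l=g-1$ (so the coefficients are genus-one Jacobi forms), passes to a connected Shimura variety, applies linear functionals $\iota$ on the Chow group, expresses each coefficient as a finite sum of generating series attached to tuples $\underline{x}$ as in W.~Zhang's thesis, and uses the shift by powers of the dual tautological bundle to identify these with codimension-one generating series on $\bH_1$, whose convergence is precisely the unconditional divisor case in the second part of Liu's Theorem~3.5. Without an argument of this kind your first step is unsupported.

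A secondary problem is your bookkeeping with the type $\rho$. The claim that the classes $Z(T,\varphi)\cdot(\cL^\vee)^{g-r(T)}$ span a finite-dimensional subspace $W\subseteq \mathrm{CH}^g(X_K)_\C$ is not known a priori---there are infinitely many indices $T$, and finite-dimensionality of the span is a consequence of modularity, not an input---and $\UGZ$ does not act on the Chow group in the formulation of the conjecture, so there is no finite-quotient representation to extract from the level structure. The correct framework, and the one the paper uses, is the trivial type: apply arbitrary linear functionals on $\mathrm{CH}^g(X_K)_\C$, verify that each resulting scalar-valued series lies in $\mathrm{FM}_n^{(g,g-1)}$, and then invoke Theorem~\ref{heaven} with $l=g-1$, $k=n$, and trivial $\rho$.
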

\begin{proof}
We claim that the generating series $\psi_{g, \varphi}^{\mathrm{CH}} (\tau)$ is a symmetric formal Fourier--Jacobi series of genus $g$, cogenus $g - 1$, and weight $n$, and the statement then follows from Theorem~\ref{heaven} for $l = g - 1$, $k = n$, and the trivial type $\rho$. First, writing $\tau = \begin{psmatrix}\tau_1 & w \\ z & \tau_2 \end{psmatrix}$, we consider the formal Fourier--Jacobi expansion
\begin{linenomath}
 \begin{gather*}
     \psi_{g, \varphi}^{\mathrm{CH}} (\tau) 
     =
     \sum_{m \in \mathrm{Herm}_{g - 1} (E)_{\geq 0}}
     \phi_{m, \varphi} (\tau_1, w, z) e (m \tau_2)
     \tx{,}
 \end{gather*}
\end{linenomath}
where the $m$-th formal Fourier--Jacobi coefficient $\phi_{m, \varphi}$ is given by
\begin{linenomath}
 \begin{gather*}
     \phi_{m, \varphi}(\tau_1, w, z)
     =
     \sum_{\substack{n \in \Q_{\geq 0}\\r \in \mathrm{Mat}_{1, g - 1} (E)}}
     Z \Bigg(\begin{pmatrix}n & r \\ r^\ast & m \end{pmatrix}, \varphi \Bigg)
     e (n \tau_1 + r z)
     e (r^\ast w)
     \tx{.}
 \end{gather*}
\end{linenomath}
To verify our claim, by the first part of Theorem 3.5 in \cite{MR2928563}, it suffices to show the absolute convergence of $\phi_{m, \varphi}$, and we proceed as in W. Zhang's thesis \cite{zhang-2009}. Although connected components of a Shimura variety $X_K$ are in general defined over cyclotomic extensions of $\Q$, they are linked via Galois action, and special cycles on them are Galois conjugate to each other. Therefore, without loss of generality, we may work with a connected Shimura varieties in the classical setting as in \cite{borcherds-1999}, and unless otherwise stated, we use from now on the numbering and notation as in Chapter 2 of \cite{zhang-2009} for the counterparts that can be similarly defined in the unitary case. The Shimura variety $X_K$ we work with can be written as $X_\Gamma$ for a neat congruence subgroup $\Gamma$ of $\rmU (L)$, where $L$ is the unitary counterpart of the lattice defined in Theorem 2.5.

First, we note that our formal Fourier--Jacobi coefficients $\phi_{m, \varphi}$ correspond to $\theta_{(\lambda, \mu), t}$ defined in (2.7), where the functions $F$ are defined as $F_\iota$ just before Theorem 2.9 for linear functionals $\iota$ on the Chow group $\mathrm{CH}^r_L$ and $1\leq r = g \leq n - 1$. By (2.10), which expresses $\theta_{(\lambda, \mu), t}$ as a finite sum of $\theta_{\lambda,\underline{x}}$ (defined just before (2.10)), and following the proof of Proposition 2.6, we see that each formal Fourier--Jacobi coefficient $\theta_{(\lambda, \mu), t}$ is absolutely convergent assuming the absolute convergence of the vector-valued generating $q$-series $\Theta_{F_{\underline{x}}} = \Theta_{F_{\iota, \underline{x}}}$ (defined just before Theorem 2.5) on $\bH_1$ for every tuple $\underline{x} \in L^{\vee, r - 1}$ with positive definite $\Q \underline{x}$, and $1\leq r \leq n - 1$. Following the proof of Theorem 2.9, for any linear functional $\iota$ on the Chow group $\mathrm{CH}^r_L$, in the unitary case we also have the equation $\Theta_{F_{\iota, \underline{x}}} =
\Theta_{F_{\iota \circ i \underline{x}}}$, where $i \underline{x}$ is the associated shifting by a power of dual line bundle defined in the middle of the proof. Since $\iota \circ i\underline{x}$ is a linear functional on the first Chow group $\mathrm{CH}^1_{L_{\underline{x}}}$, the absolute convergence of the counterpart of $\Theta_{F_{\iota \circ i\underline{x}}}$ in the unitary case follows from the second part of Theorem 3.5 in \cite{MR2928563}, which completes the proof of our claim.
\end{proof}

\begin{remark}
If the Hermitian space $V$ is anisotropic, then $X_K$ is compact and our theorem is already complete in this case. If $V$ is isotropic so that $X_K$ is not compact, we only obtain the result for open Shimura varieties. On a toroidal compactification $X_K$, modularity of the generating series of special cycles becomes a natural question only after we define the boundary components of special cycles. The problem was posed in \cite{kudla-2004}, and in the case of special divisors on toroidal compactifications of orthogonal Shimura varieties, Jan--Zemel proved the corresponding modularity conjecture in their recent preprint \cite{Bruinier--Zemel}.
\end{remark}

\section*{Acknowledgements}
The author wishes to express his gratitude to his main advisor, Martin Raum, for introducing this fascinating topic and many useful discussions. He would also like to thank Yifeng Liu for pointing out the notion of norm--Euclidean number fields which recovers two cases carelessly missed in a first draft. The author also thanks Yota Maeda for informing his work \cite{maeda-2020a, Maeda-2021} on the generalized Kudla modularity conjectures.


\renewbibmacro{in:}{}
\renewcommand{\bibfont}{\normalfont\small\raggedright}
\renewcommand{\baselinestretch}{.8}

\Needspace*{4em}
\begin{multicols}{2}
\printbibliography[heading=none]
\end{multicols}


\end{document}